\renewcommand{\brho}{\boldsymbol{\rho}}
\renewcommand{\bnu}{\boldsymbol{\nu}}
\title[Hyperplane arrangements associated to quotient singularities]{Hyperplane arrangements associated to\protect\\symplectic quotient singularities}
\author[G. Bellamy]{Gwyn Bellamy}
\address{School of Mathematics and Statistics, University of Glasgow, University Gardens, Glasgow G12 8QW, Scotland}
\email{gwyn.bellamy@glasgow.ac.uk}
\author[T. Schedler]{Travis Schedler}
\address{Department of Mathematics, South Kensington Campus, Imperial College London, London SW7 2AZ, United Kingdom}
\email{trasched@gmail.com}
\author[U. Thiel]{Ulrich Thiel}
\address{Universität Stuttgart, Fachbereich Mathematik, Institut für Algebra und Zahlentheorie, Lehrstuhl für Algebra, Pfaffenwaldring 57, 70569 Stuttgart, Germany}
\email{mailto:thiel@mathematik.uni-stuttgart.de}
\begin{document}  
\blfootnote{Date: 13.06.2017}
\begin{abstract}
We study the hyperplane arrangements associated, via the minimal model programme, to symplectic quotient singularities. We show that this hyperplane arrangement equals the arrangement of CM-hyperplanes coming from the representation theory of restricted rational Cherednik algebras. We explain some of the interesting consequences of this identification for the representation theory of restricted rational Cherednik algebras. We also show that the Calogero-Moser space is smooth if and only if the Calogero-Moser families are trivial. We describe the arrangements of CM-hyperplanes associated to several exceptional complex reflection groups, some of which are free.    
\end{abstract}
 
\maketitle
\tableofcontents

\section*{Introduction}
\setcounter{section}{1}

The goal of this article, which can be viewed as a sequel to \cite{BellamyNamikawa}, is to study basic properties of the hyperplane arrangements associated, via the minimal model programme, to symplectic quotient singularities. In order to be able to compute these hyperplane arrangements, it seems essential to try and understand how they are related to the representation theory of the associated symplectic reflection algebra. Our results are focused on doing just that. 

We recall Namikawa's theory \cite{Namikawa,Namikawa2,Namikawa3}, applied to our situation. Let $V$ be a finite-dimensional symplectic complex vector space and $\Gamma \subset \mathrm{Sp}(V)$ a finite group. Then the quotient $X := V/ \Gamma$ is a conic symplectic variety. By the minimal model programme, we can choose a projective $\Q$-factorial terminalization $\rho : Y \rightarrow X$.  We fix $\mf{c} := H^2(Y,\C)$. By \cite{Namikawa3} the space $Y$ admits a universal graded Poisson deformation $\bnu : \mc{Y} \rightarrow \mf{c}$ over $\mf{c}$. The quotient $X$ also admits a flat Poisson deformation $\eta:\mf{X} \to \mf{c}$ over $\mf{c}$ such that $\rho$ is the specialization at zero of a morphism $\brho : \mc{Y} \rightarrow \mf{X}$ and  the diagram 
\begin{equation}\label{eq:commtri}
\begin{tikzpicture}
\draw [->] (0.3,0) -- (3.7,0);
\draw [->] (0.2,-0.2) -- (1.8,-1.3);
\draw [->] (3.8,-0.3) -- (2.2,-1.3);
\node at (0,0) {$\mc{Y}$};
\node at (2,0.2) {$\brho$};
\node at (4,0) {$\mf{X} $};
\node at (2,-1.5) {$\mf{c}$};
\node at (0.8,-1) {$\bnu$};
\node at (3.2,-1) {$\eta$};
\end{tikzpicture}
\end{equation}
is commutative.

Assume that there exists a $\Gamma$-stable Lagrangian $\h \subset V$, so that $V = \h \oplus \h^*$ as a symplectic $\Gamma$-module. Then there is a natural action of a one dimensional torus $T$ on $X$. This is the action induced from the action 
\begin{equation}\label{eq:torusaction}
t \cdot (y,x) = (ty,t^{-1} x), \quad \forall \ x \in \h^*, \ y \in \h. 
\end{equation}
In particular, $t \cdot y = t y$ for $y \in \h$. The action is Hamiltonian, and extends to a Hamiltonian action on $\mf{X}$. The fact that $\brho: \mc{Y} \rightarrow \mf{X}$ is Poisson implies that the action lifts to $\mc{Y}$, such that $Y \subset \mc{Y}$ is $T$-stable. Defining a trivial action of $T$ on $\mf{c}$, the maps in the commutative diagram (\ref{eq:commtri}) are equivariant. 

\begin{prop}\label{conj:mainresult}
$|\mc{Y}_{\bk}^T|$ is independent of $\bk \in \mf{c}$. 
\end{prop}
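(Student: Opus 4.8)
The plan is to study the restriction of $\bnu$ to the $T$-fixed locus. Since $T$ acts trivially on $\mf{c}$, the fixed subscheme $\mc{Y}^T \subset \mc{Y}$ is stable and the induced map $\pi := \bnu|_{\mc{Y}^T} \colon \mc{Y}^T \to \mf{c}$ has the sets $\mc{Y}_{\bk}^T$ as its fibres. I would prove the proposition by showing first that $\pi$ is a \emph{finite} morphism, and then that all of its fibres have the same number of points. Throughout I would exploit the second, commuting conic $\C^\times$-action on everything (the scaling $s\cdot(y,x) = (sy,sx)$ on $V$), under which $\mf{c}$ is positively graded and contracts to $0$, together with the fact from Namikawa's theory that $\brho$ is projective.

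\textbf{Finiteness.} The map $\brho$ is projective and $T$-equivariant, so it carries $\mc{Y}^T$ into $\mf{X}^T$ and the induced map $\mc{Y}^T \to \mf{X}^T$ is projective. Its fibres are the $T$-fixed loci of the fibres of $\brho$, which are isotropic (as fibres of a Poisson resolution); since $T$ preserves the symplectic form, its weights on any tangent space occur in pairs $\pm a$, so a positive-dimensional fixed component would be a symplectic subvariety, which is impossible inside an isotropic fibre. Hence $\mc{Y}^T \to \mf{X}^T$ is quasi-finite and projective, therefore finite, and as $\mf{X}^T$ is affine (closed in the affine-over-$\mf{c}$ scheme $\mf{X}$) the scheme $\mc{Y}^T$ is itself affine. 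In particular the central fibre $\mc{Y}_0^T = Y^T = (\rho^{-1}(0))^T$ is finite. Now $\mc{O}(\mc{Y}^T)$ is a graded $\mc{O}(\mf{c})$-algebra whose reduction modulo the irrelevant ideal is the finite-dimensional ring $\mc{O}(Y^T)$, so graded Nakayama forces $\mc{O}(\mc{Y}^T)$ to be a finite $\mc{O}(\mf{c})$-module, i.e. $\pi$ is finite; in particular every fibre $\mc{Y}_{\bk}^T$ is finite.

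\textbf{Constancy.} It remains to see that the fibre cardinality of the finite map $\pi$ over the smooth connected base $\mf{c}$ is constant. I would argue that $\pi$ is finite and flat with reduced fibres. Flatness follows from miracle flatness once one knows $\mc{Y}^T$ is Cohen–Macaulay, since $\mf{c}$ is regular and the fibres are $0$-dimensional; then $\sum_{p \in \mc{Y}_{\bk}^T} \operatorname{length}_p$ equals the constant $N := \dim_\C \mc{O}(Y^T)$. To upgrade length to cardinality one must show each $\mc{Y}_{\bk}^T$ is reduced, equivalently that the $T$-fixed points are nondegenerate, so that $T$ acts on the tangent space with nonzero weights. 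Equivalently, and more topologically, isolated fixed points give $|\mc{Y}_{\bk}^T| = \chi(\mc{Y}_{\bk})$ by localisation of the Euler characteristic for the $\C^\times = T$ action, and one is reduced to deformation-invariance of $\chi(\mc{Y}_{\bk})$, which would follow from topological local triviality of the conic symplectic family $\bnu$.

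The hard part is precisely this last point: that the number of fixed points does not drop at special $\bk$. This genuinely fails on the base $\mf{X}$, where the $N$ fixed points collide into a single fat point of $X = V/\Gamma$ over the special parameters; the whole content of the proposition is that passing to the $\Q$-factorial terminalization $\mc{Y}$ \emph{separates} these points. Making this precise amounts to showing that the $T$-fixed points of $\mc{Y}_{\bk}$ lie in its smooth locus and are nondegenerate (so that $\mc{Y}_{\bk}^T$ is reduced of constant size $N$, and $\mc{Y}^T$ is smooth, in particular Cohen–Macaulay), and this is where the terminality of the fibres must enter.
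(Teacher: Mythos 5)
Your proposal does not close, and the half you yourself flag as ``the hard part'' is exactly where all the content of the proposition lies. You reduce constancy to two unproven claims: that $\mc{Y}^T$ is Cohen--Macaulay (for miracle flatness) and that every $T$-fixed point of every fibre $\mc{Y}_{\bk}$ is a \emph{smooth, nondegenerate} point of that fibre (for reducedness of the fibres of $\mc{Y}^T \to \mf{c}$). The second claim is in fact false, so the route is not merely incomplete but unrepairable as stated: terminality of the fibres only forces their singular locus to have codimension at least $4$, and $T$-fixed points can perfectly well be singular points of $\mc{Y}_{\bk}$. The paper itself contains a counterexample: for the dihedral group $D_m$ ($m$ even) and $\bk$ outside the arrangement, $\mf{X}_{\bk}$ is $\Q$-factorial and terminal, so $\brho_{\bk}$ is an isomorphism and $\mc{Y}_{\bk} \simeq \mf{X}_{\bk}$; this variety has a unique singular point, and that point is $T$-fixed. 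Hence flatness-plus-reducedness of the scheme $\mc{Y}^T$ over $\mf{c}$ is the wrong invariant to chase. (Your finiteness step is broadly plausible, though the isotropy of the fibres of $\brho$ over a singular total space also needs justification rather than assertion.)

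What is missing are the paper's two key ideas, which are designed precisely to get around the problem above. First, an external input: since $Y^{\mathrm{an}}$ is $\Q$-factorial, Namikawa's theorem (Proposition \ref{prop:locallytrivial}, resting on Corollary A.10 and Theorem 17 of \cite{NamikawaPoissondeformations}) says that $\bnu$ is locally trivial in the analytic topology near $Y$; combined with the contracting $\Cs$-action this yields Proposition \ref{prop:smoothstrata}: every stratum $\mc{Y}_i$ of the \emph{singular stratification} of the total space $\mc{Y}$ is smooth over $\mf{c}$. Second, one counts fixed points stratum by stratum rather than fibre by fibre: $T$ preserves each $\mc{Y}_i$, the restriction $\bnu^{(i)}|_{\mc{Y}_i^T}$ is smooth (as $T$ acts trivially on $\mf{c}$) with finite generic fibres, hence \'etale, so $|\mc{Y}^T_{\bk} \cap \mc{Y}_i|$ is independent of $\bk$, and summing over $i$ gives the proposition (this is Proposition \ref{prop:Ysmoothfinite}). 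A fixed point sitting at a singular point of its fibre causes no trouble in this scheme, because it lies on a deeper stratum, and that stratum is itself smooth over the base. Your closing alternative --- that constancy of $|\mc{Y}^T_{\bk}| = \chi(\mc{Y}_{\bk})$ would follow from topological local triviality of the family $\bnu$ --- is actually the closest you come to the real proof: the missing ingredient is exactly Namikawa's local triviality theorem, which you neither invoke nor could establish by the elementary means in your sketch.
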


The motivation for proving this proposition comes from the representation theory of restricted rational Cherednik algebras. In this case we are dealing with a finite complex reflection group $\Gamma \subset \mathrm{GL}(\h)$ and we consider $V := \h \oplus \h^*$ with the induced $\Gamma$-action, making $(\Gamma,V)$ a symplectic reflection group.  Note that $|\mf{X}^T_{\bk}|$ is finite for all $\bk$. Let $N := |\mf{X}^T_{\bk'}|$ for some generic $\bk'$. Then $ |\mf{X}^T_{\bk}| \le N$ for all $\bk \in \mf{c}$. Bonnaf\'e-Rouquier \cite[\S 9.7]{BonnafeRouquier} have shown that the set $\mc{E} \subset \mf{c}$ of all $\bk$ such that $ |\mf{X}^T_{\bk}| < N$ is a purely codimension one closed subvariety. In all known examples, $\mc{E}$ is a union of hyperplanes, and these hyperplanes are closely related to the \textit{essential hyperplanes} defining Rouquier's families for cyclotomic Hecke algebras; see \cite{Chloubook} for Rouquier families and \cite{GordonMartinoCM,Mo,BellThiel1} for results and conjectures about this relation. Representation-theoretically, the closed points of $\mf{X}^T_{\bk}$ are in bijection with the blocks of the restricted rational Cherednik algebra $\overline{\H}_{\bk}(\Gamma)$; this is explained further below. Thus, $\mc{E}$ has a purely representation theoretic interpretation. On the other hand, Namikawa has shown that there is a natural hyperplane arrangement $\mc{D} \subset \mf{c}$, having various equivalent geometric definitions \cite{Namikawa3}. The easiest of which is to say that $\mc{D}$ consists of all points $\bk$ such that $\brho_{\bk} : \mc{Y}_{\bk} \rightarrow \mf{X}_{\bk}$ is \textit{not} an isomorphism. We prove that proposition \ref{conj:mainresult} implies:

\begin{thm}\label{thm:mainthm}
$\mc{D} = \mc{E}$. 
\end{thm}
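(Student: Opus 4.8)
The plan is to deduce $\mc{D}=\mc{E}$ from Proposition~\ref{conj:mainresult} by comparing the finite sets $\mc{Y}_{\bk}^T$ and $\mf{X}_{\bk}^T$ through the $T$-equivariant, projective, birational morphism $\brho_{\bk}$. First I would fix the two constants. Write $M:=|\mc{Y}_{\bk}^T|$, which is independent of $\bk$ by Proposition~\ref{conj:mainresult}, and recall $N=|\mf{X}_{\bk'}^T|$ for generic $\bk'$. Since $\mc{D}$ (Namikawa) and $\mc{E}$ (Bonnaf\'e--Rouquier) are both proper closed subsets of $\mf{c}$, a generic $\bk'$ lies outside $\mc{D}\cup\mc{E}$; there $\brho_{\bk'}$ is an isomorphism, hence restricts to a bijection $\mc{Y}_{\bk'}^T\xrightarrow{\sim}\mf{X}_{\bk'}^T$, giving $M=N$.

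Next I would establish a surjection on fixed points for every $\bk$. As $\brho_{\bk}$ is $T$-equivariant it restricts to $\brho_{\bk}^T:\mc{Y}_{\bk}^T\to\mf{X}_{\bk}^T$, and for any $p\in\mf{X}_{\bk}^T$ the fibre $\brho_{\bk}^{-1}(p)$ is nonempty ($\brho_{\bk}$ is surjective), projective, and $T$-stable (as $p$ is fixed); a projective variety carrying a $\C^*$-action has a fixed point, so $\brho_{\bk}^{-1}(p)^T\neq\emptyset$ and $\brho_{\bk}^T$ is surjective. In particular $|\mf{X}_{\bk}^T|\le M=N$ for all $\bk$. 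The inclusion $\mc{E}\subseteq\mc{D}$ is then immediate: if $\bk\notin\mc{D}$ then $\brho_{\bk}$ is an isomorphism, forcing $|\mf{X}_{\bk}^T|=|\mc{Y}_{\bk}^T|=M=N$, i.e. $\bk\notin\mc{E}$.

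The crux is the reverse inclusion $\mc{D}\subseteq\mc{E}$, equivalently that $\bk\notin\mc{E}$ implies $\brho_{\bk}$ is an isomorphism. Assume $|\mf{X}_{\bk}^T|=N$. Then $\sum_{p\in\mf{X}_{\bk}^T}|\brho_{\bk}^{-1}(p)^T|=|\mc{Y}_{\bk}^T|=M=N=|\mf{X}_{\bk}^T|$ with every summand $\ge 1$, so each fibre over a $T$-fixed point contains \emph{exactly one} $T$-fixed point. I would then upgrade this to a global isomorphism in two steps. (i) A positive-dimensional fibre over $p\in\mf{X}_{\bk}^T$ contains at least two $T$-fixed points: it is a connected projective $T$-variety, and since it arises as a fibre of the crepant Poisson morphism $\brho_{\bk}$ its reduced irreducible components are well-behaved enough (normal, of symplectic type) that a positive-dimensional one either carries a trivial $T$-action, giving infinitely many fixed points, or a nontrivial one with distinct source and sink --- in either case more than one fixed point, contradicting the exact count. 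Hence every fibre over $\mf{X}_{\bk}^T$ is a single reduced point. (ii) There are no positive-dimensional fibres elsewhere: because $\brho_{\bk}$ is crepant it is an isomorphism over the smooth (symplectic) locus of $\mf{X}_{\bk}$, so its exceptional locus lies in the $T$-stable closed singular locus, which via the standard finite $T$-equivariant morphism $\Upsilon_{\bk}:\mf{X}_{\bk}\to(\h/\Gamma)\times(\h^*/\Gamma)$ --- whose unique $T$-fixed point is the origin --- is concentrated over $\mf{X}_{\bk}^T$, reducing to (i). With all fibres finite, Zariski's main theorem applied to the birational projective $\brho_{\bk}$ with normal target yields an isomorphism, so $\bk\notin\mc{D}$. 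I expect the genuine difficulty to be sub-step (ii): the $T$-action on $\mf{X}_{\bk}$ is hyperbolic rather than contracting, so a $T$-stable closed subset need not meet the fixed locus, and locating the exceptional locus over $\mf{X}_{\bk}^T$ must genuinely exploit crepantness together with $\Upsilon_{\bk}$. This is precisely where I anticipate Proposition~\ref{conj:mainresult} and the specific geometry of the Calogero--Moser space doing the essential work.
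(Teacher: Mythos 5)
Your setup coincides with the paper's (its proof appears as Theorem \ref{thm:fiberlessthan}): use Proposition \ref{conj:mainresult} to fix $|\mc{Y}_{\bk}^T|=N$ for all $\bk$, observe $\mc{E}\subseteq\mc{D}$ trivially, and reduce the converse to showing that whenever $\brho_{\bk}$ fails to be an isomorphism, some positive-dimensional fibre lies over a point of $\mf{X}_{\bk}^T$. But precisely at that point — your sub-step (ii) — there is a genuine gap, which you yourself flag. As you correctly note, the $T$-action is hyperbolic, so a closed $T$-stable subset of $\mf{X}_{\bk}$ (such as the singular locus or the non-isomorphism locus $\mc{W}$) need \emph{not} meet the fixed locus: e.g.\ the closure of a generic $T$-orbit in $(\h\times\h^*)/\Gamma$ is closed, $T$-stable and misses $0$. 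Your assertion that the singular locus is ``concentrated over $\mf{X}_{\bk}^T$'' via $\Upsilon_{\bk}$ is therefore unjustified, and no argument is offered to replace it. The paper closes exactly this gap with two structural results that are absent from your proposal: Theorem \ref{thm:unionleaves}, stating that the non-isomorphism locus of a projective birational Poisson morphism is a \emph{union of symplectic leaves} (proved by comparing leaf dimensions under Poisson morphisms), and Theorem \ref{thm:Upsilonleaf} (via Lemmata \ref{lem:leafinter} and \ref{lem:leftrightW}, using the filtration on $\H_{\bk}(\Gamma)$ and associated graded Poisson primes), stating that $\Upsilon_{\bk}(\overline{\mc{L}})=\h^{(\Gamma')}/\Gamma\times(\h^*)^{(\Gamma')}/\Gamma$ for every leaf $\mc{L}$. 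Since such a product of cones contains the origin and $\Upsilon_{\bk}^{-1}(0)=\mf{X}_{\bk}^T$, every leaf closure — hence $\mc{W}$ — meets the fixed locus (Corollary \ref{cor:fixedleaf}). This is the essential content of the proof, and it is representation-theoretic input about Calogero--Moser space, not a formal consequence of crepancy.

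A secondary flaw: in your sub-step (i), the claim that a positive-dimensional connected projective $T$-variety with finitely many fixed points has at least two of them cannot be argued via ``distinct source and sink'' for the components. The fibres of $\brho_{\bk}$ are not symplectic varieties and need not be normal, and for non-normal varieties the two limits of a one-dimensional orbit can coincide (a nodal rational curve carries a $\C^{\times}$-action with exactly one fixed point). The paper instead deduces this from the Bia\l ynicki-Birula decomposition, which gives a paving of each fibre by affine cells (one per fixed point) whose number equals $\dim H^{\idot}$ of the fibre, combined with connectedness of the fibres from $\brho_{*}\mc{O}_{\mc{Y}}=\mc{O}_{\mf{X}}$ and Zariski's main theorem: a single affine cell that is projective and positive-dimensional is impossible. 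So while your counting framework is the right one, both the minor step (i) and the crucial step (ii) need the arguments the paper supplies and your proposal lacks.
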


Theorem \ref{thm:mainthm} has several consequences:
\begin{enumerate}
\item $\mc{E}$ is indeed a union of hyperplanes, answering in the affirmative Question 9.8.4 raised by Bonnaf\'e and Rouquier in \cite{BonnafeRouquier}. 
\item $\mc{E}$ is stable under the action of Namikawa's Weyl group $W$, see \S\ref{nam_weyl} and Lemma \ref{nam_weyl_sym}. (This is not true for the essential hyperplanes defining Rouquier's families.)
\item $\mc{E}$ contains the Coxeter arrangement corresponding to $W$.
\item We identify a certain ``fine-structure'' in $\mc{E}$, namely it consists of two types of hyperplanes: those where $\mf{X}_\bk$ has non-terminal singularities (we call them \textit{$T$-hyperplanes}) and those where $\mf{X}_\bk$ is not $\Q$-factorial (we call them \textit{$F$-hyperplanes}).
\item The arrangement $\mc{E}$ is rational i.e. there is a canonical $\Q$-subspace $\mf{c}_{\Q} \subset \mf{c}$ such that $\mc{E} \subset \mf{c}_{\Q}$ and $\mf{c} = \mf{c}_{\Q} \otimes_{\Q} \C$. 
\end{enumerate}

Let $\Irr \Gamma$ denote the set of irreducible complex representations of $\Gamma$. As noted above, the blocks of the restricted rational Cherednik algebra $\overline{\H}_{\bk}(\Gamma)$ define a partition $\Omega_{\bk}(\Gamma)$ of $\Irr \Gamma$ into so-called \textit{Calogero-Moser families}. The families in $\Omega_{\bk}(\Gamma)$ are naturally in bijection with the closed points of $\mf{X}_{\bk}^T$. We say that the Calogero-Moser families are \textit{trivial} if each member of the partition contains only one element. This representation theoretic characterization of $\mf{X}_{\bk}^T$ allows one to use the representation theory of $\overline{\H}_{\bk}(\Gamma)$ to deduce facts about the geometry of $\mf{X}_{\bk}$. Given a parabolic subgroup $\Gamma'$ of $\Gamma$, we let $\bk'$ denote the restriction of $\bk$ to $\Gamma'$. We show that:

\begin{thm}\label{thm:CMfamilytriv}
If the Calogero-Moser families $\Omega_{\bk'}(\Gamma')$ are non-trivial for some parabolic subgroup $\Gamma' \subset \Gamma$, then the Calogero-Moser families $\Omega_{\bk}(\Gamma)$ are non-trivial. 
\end{thm}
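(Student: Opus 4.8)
The plan is to prove the contrapositive: if $\Omega_{\bk}(\Gamma)$ is trivial, then $\Omega_{\bk'}(\Gamma')$ is trivial for every parabolic subgroup $\Gamma' \subset \Gamma$. The two inputs I would use are the criterion that the families $\Omega_{\bk}(\Gamma)$ are trivial precisely when the Calogero--Moser space $\mf{X}_{\bk}$ attached to $(\Gamma,\bk)$ is smooth (this follows by combining Theorem \ref{thm:mainthm} with the fixed-point count: triviality forces $|\mf{X}_{\bk}^T| = |\Irr \Gamma| = N$, hence $\bk \notin \mc{E} = \mc{D}$ and a symplectic resolution exists, so $\brho_{\bk}$ is an isomorphism onto a smooth space), and the local transverse-slice description of $\mf{X}_{\bk}$ along its symplectic leaves. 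For the latter I would first record, using Steinberg's theorem, that $\Gamma' = \Gamma_x$ for some $x \in \h$, that $\h = \h^{\Gamma'} \oplus \h'$ with $(\Gamma',\h')$ again a complex reflection group, and that correspondingly $V = V^{\Gamma'} \oplus (\h' \oplus \h'^{*})$ is a symplectically orthogonal decomposition of symplectic $\Gamma'$-modules.

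Next I would invoke the factorisation theorem for Calogero--Moser spaces: at a point of the symplectic leaf of $\mf{X}_{\bk}$ indexed by the conjugacy class of $\Gamma'$ there is a $T$-equivariant Poisson isomorphism of formal (or \'etale) neighbourhoods
\[
  \mf{X}_{\bk}(\Gamma) \;\cong\; \mf{X}_{\bk'}(\Gamma') \times \C^{\,2\dim \h^{\Gamma'}},
\]
in which $\bk'$ is exactly the restriction of $\bk$ to $\Gamma'$. Since smoothness is local and the second factor is smooth, smoothness of $\mf{X}_{\bk}(\Gamma)$ immediately forces smoothness of $\mf{X}_{\bk'}(\Gamma')$. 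Applying the smoothness criterion in the other direction, now to the pair $(\Gamma',\bk')$, yields triviality of $\Omega_{\bk'}(\Gamma')$, which is exactly the contrapositive statement. The argument is uniform: if $\Gamma$ (equivalently $\Gamma'$, by the slice) admits no symplectic resolution the hypothesis that $\Omega_{\bk}(\Gamma)$ is trivial is vacuous, so nothing needs to be checked.

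The step I expect to be the main obstacle is the factorisation isomorphism, specifically verifying that the deformation parameter of the transverse slice is precisely the restricted parameter $\bk'$ as defined in the statement, and that the isomorphism can be chosen $T$-equivariant and Poisson so that both the smooth loci and the torus-fixed loci are matched. One also has to check that the relevant leaf is non-empty, which holds because $\Gamma'$ is a genuine point stabiliser. As a purely representation-theoretic alternative one could try to transport a non-trivial family for $\Gamma'$ upward using the exact parabolic induction and restriction functors of Bezrukavnikov--Etingof on category $\mathcal{O}$ together with their interaction with the block partition; I would keep the geometric argument as primary, however, since matching category-$\mathcal{O}$ blocks with Calogero--Moser families is more delicate.
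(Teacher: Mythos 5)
Your overall strategy (contrapositive, smoothness criterion, Bezrukavnikov--Etingof slice) runs parallel to the paper's argument, but there is a genuine gap at your first and most important step: the implication ``$\Omega_{\bk}(\Gamma)$ trivial $\Rightarrow \mf{X}_{\bk}$ smooth''. Your derivation only shows that triviality forces $|\mf{X}_{\bk}^T| = N = |\Irr \Gamma|$, hence $\bk \notin \mc{E} = \mc{D}$ by Theorem \ref{thm:mainthm}, hence $\brho_{\bk} : \mc{Y}_{\bk} \rightarrow \mf{X}_{\bk}$ is an isomorphism. But $\mc{Y}_{\bk}$ is a fibre of the Poisson deformation of a $\Q$-factorial \emph{terminalization} $Y$, not of a resolution: by Namikawa's local triviality (Proposition \ref{prop:locallytrivial}) it is smooth if and only if $Y$ itself is smooth, i.e.\ if and only if $V/\Gamma$ admits a symplectic resolution. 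Your parenthetical ``and a symplectic resolution exists'' is unjustified, and your fallback (``if $\Gamma$ admits no symplectic resolution the hypothesis that $\Omega_{\bk}(\Gamma)$ is trivial is vacuous'') is circular: the assertion that trivial families are impossible when no resolution exists is logically equivalent to the implication ``trivial $\Rightarrow$ smooth'' that you are trying to establish. All that $\bk \notin \mc{D}$ actually gives you is that $\mf{X}_{\bk}$ is $\Q$-factorial with terminal singularities, which does not exclude singular points.

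The paper closes exactly this gap with machinery absent from your proposal, and it is the real content of the theorem. Theorem \ref{thm:Upsilonleaf} (proved via Lemmata \ref{lem:leafinter} and \ref{lem:leftrightW}, using the associated-graded/filtration argument and Lemma \ref{lem:easystab}) shows that the closure of \emph{every} symplectic leaf of $\mf{X}_{\bk}$ maps under $\Upsilon_{\bk}$ onto $\h^{(\Gamma')}/\Gamma \times (\h^*)^{(\Gamma')}/\Gamma$ for some parabolic $\Gamma'$; hence every leaf closure contains a $T$-fixed point (Corollary \ref{cor:fixedleaf}). Since the singular locus is a union of leaves, non-smoothness produces a singular $T$-fixed point, where the supported block is non-trivial --- this is how ``trivial $\Rightarrow$ smooth'' (Corollary \ref{cor:CM1}) is established; it is a new result of the paper, not a standard fact, and it cannot be recovered from Theorem \ref{thm:mainthm} alone (indeed the proof of Theorem \ref{thm:mainthm} itself invokes Corollary \ref{cor:fixedleaf}). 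The paper's route to Theorem \ref{thm:CMfamilytriv} is then the mirror image of yours: non-trivial for $\Gamma'$ $\Rightarrow$ $\mf{X}_{\bk'}(\Gamma')$ singular $\Rightarrow$ $\mf{X}_{\bk}(\Gamma)$ singular via the slice $\Rightarrow$ a singular $T$-fixed point $\Rightarrow$ non-trivial for $\Gamma$. So your slice step and the reduction are sound; to repair your proof, replace the appeal to Theorem \ref{thm:mainthm} by the leaf/fixed-point argument. A minor further point: the completion isomorphism is taken at a point lying over $(x,0)$ with $x \neq 0$, which is not $T$-fixed, so you cannot demand that it match torus-fixed loci; only its Poisson and smoothness content is needed.
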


This theorem has the following immediate geometric corollary. 

\begin{cor}\label{cor:CM1} 
The Calogero-Moser space $\mf{X}_{\bk}$ is smooth if and only if the Calogero-Moser families $\Omega_{\bk}(\Gamma)$ are trivial. 
\end{cor}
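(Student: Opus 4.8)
The plan is to deduce Corollary \ref{cor:CM1} by combining Theorem \ref{thm:CMfamilytriv} with the geometric interpretation of $\mf{X}_\bk^T$ and the standard smoothness criterion for Calogero-Moser spaces. Recall that the closed points of $\mf{X}_\bk^T$ are in bijection with the Calogero-Moser families $\Omega_\bk(\Gamma)$, so the families are trivial precisely when $|\mf{X}_\bk^T| = |\Irr\Gamma|$, i.e.\ when the number of fixed points attains its maximum. Since $\mf{X}_\bk$ carries a contracting $T$-action with isolated fixed points and is obtained by deformation from $V/\Gamma$, smoothness of $\mf{X}_\bk$ is governed by the fixed-point count: the whole point of Proposition \ref{conj:mainresult} and Theorem \ref{thm:mainthm} is that $\mf{X}_\bk$ is smooth exactly when $\bk \notin \mc{D} = \mc{E}$, which in turn happens exactly when $|\mf{X}_\bk^T|$ is maximal.

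I would argue both implications separately. For the forward direction, suppose $\mf{X}_\bk$ is smooth. Then $\brho_\bk : \mc{Y}_\bk \to \mf{X}_\bk$ is an isomorphism (a smooth symplectic variety has no crepant resolution other than itself, so the terminalization is trivial), hence $\bk \notin \mc{D} = \mc{E}$. By definition of $\mc{E}$ this gives $|\mf{X}_\bk^T| = N = |\Irr\Gamma|$, where the last equality holds because the generic fibre is smooth with exactly $|\Irr\Gamma|$ torus-fixed points. Therefore every Calogero-Moser family is a singleton, i.e.\ $\Omega_\bk(\Gamma)$ is trivial. For the converse, I would prove the contrapositive: assume $\mf{X}_\bk$ is singular. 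A singular symplectic quotient singularity, being symplectic, cannot have terminal singularities along its whole singular locus in a way compatible with smoothness, so $\brho_\bk$ is not an isomorphism, placing $\bk \in \mc{D} = \mc{E}$ and forcing $|\mf{X}_\bk^T| < N$; hence some family has more than one element and $\Omega_\bk(\Gamma)$ is non-trivial. The cleanest packaging is simply: $\mf{X}_\bk$ smooth $\iff \brho_\bk$ is an isomorphism $\iff \bk \notin \mc{E} \iff |\mf{X}_\bk^T| = N \iff \Omega_\bk(\Gamma)$ trivial.

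The role of Theorem \ref{thm:CMfamilytriv} here is subtle but, I expect, inessential to the bare equivalence and more about robustness of the statement; the corollary is labelled an \emph{immediate} consequence, so I anticipate the intended proof is a one- or two-line chain of equivalences through Theorem \ref{thm:mainthm}. The genuinely load-bearing inputs are (i) the identification of closed points of $\mf{X}_\bk^T$ with Calogero-Moser families, and (ii) the equivalence ``$\mf{X}_\bk$ smooth $\iff \brho_\bk$ isomorphism'' for these specific conic symplectic varieties.

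The main obstacle I foresee is justifying implication (ii) rigorously, specifically the converse: that failure of $\brho_\bk$ to be an isomorphism genuinely detects singularity of $\mf{X}_\bk$ rather than merely non-$\Q$-factoriality or the presence of non-terminal but smooth-adjacent loci. Given the ``fine structure'' noted in consequence (4)—that $\mc{E}$ splits into $T$-hyperplanes (where $\mf{X}_\bk$ acquires non-terminal singularities) and $F$-hyperplanes (where $\mf{X}_\bk$ fails to be $\Q$-factorial)—one must be careful that lying on an $F$-hyperplane alone can leave $\mf{X}_\bk$ smooth while $\brho_\bk$ is still not an isomorphism. I would resolve this by noting that $\mf{X}_\bk$ smooth forces $\mf{X}_\bk$ to be its own terminalization and hence forces $\brho_\bk$ to be an isomorphism regardless of hyperplane type, so the equivalence with ``$\bk\notin\mc{E}$'' is what requires the full strength of Theorem \ref{thm:mainthm} rather than either geometric definition of $\mc{D}$ in isolation.
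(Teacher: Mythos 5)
There is a genuine gap: two links in your chain of equivalences are false, and they fail simultaneously for the same class of groups, namely those $\Gamma$ for which $(\h\oplus\h^*)/\Gamma$ admits no symplectic resolution, so that the $\Q$-factorial terminalization $Y$ is itself singular. The paper's own dihedral example makes this concrete: for $\Gamma = G(m,m,2)$ with $m$ even and $\bk \notin \mc{E}$, the space $\mf{X}_{\bk}$ is $\Q$-factorial and terminal, so $\brho_{\bk}$ \emph{is} an isomorphism, and yet $\mf{X}_{\bk}$ has an isolated singularity $p$ (which is moreover $T$-fixed). Thus ``$\brho_{\bk}$ isomorphism $\Rightarrow \mf{X}_{\bk}$ smooth'' is false; you justified only the forward implication (smooth $\Rightarrow$ its own terminalization $\Rightarrow$ isomorphism), and the converse is exactly what breaks. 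For the same example your claim $N = |\Irr\Gamma|$ also fails: the paper asserts only $N \le |\Irr\Gamma|$, and here the family supported at the singular $T$-fixed point of a \emph{generic} fibre is non-trivial, so $N < |\Irr\Gamma|$. Consequently ``$|\mf{X}_{\bk}^T| = N \iff \Omega_{\bk}(\Gamma)$ trivial'' is also false: generically the fixed-point count is maximal while the families are not trivial. (The equivalence that holds unconditionally is: families trivial $\iff |\mf{X}_{\bk}^T| = |\Irr\Gamma|$.) Your two errors cancel to yield a true final statement, but the argument itself would ``prove'' that the generic dihedral Calogero--Moser space is smooth, which is wrong.

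The paper's proof takes a different and more robust route, and — contrary to your guess — the machinery you set aside as inessential is precisely the load-bearing input, while Theorem \ref{thm:mainthm} is not used at all (the introduction states that these results are proved directly, without the geometry of terminalizations). The forward direction (smooth $\Rightarrow$ trivial families) is quoted as standard. For the converse, if $\mf{X}_{\bk}$ is singular, its singular locus is a non-empty closed union of symplectic leaves; pick a leaf $\mc{L}$ inside it. Corollary \ref{cor:fixedleaf}, which is the geometric consequence of Theorem \ref{thm:Upsilonleaf} (proved representation-theoretically via Poisson prime ideals and the filtration argument of Section \ref{sec:thmUproof}), gives $\overline{\mc{L}}^{T} \neq \emptyset$, hence a $T$-fixed point in the singular locus, and the Calogero--Moser family supported at that fixed point is non-trivial. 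This argument never compares $|\mf{X}_{\bk}^T|$ with a generic count and therefore works whether or not a symplectic resolution exists — which is exactly the case your chain of equivalences cannot handle.
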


The proof of the above results are direct, and do not use the geometry of the terminalizations. \\

The CM-hyperplanes $\mc{E}$ have been computed in many examples. In the case of $\Gamma = \Z_{\ell} \wr \s_n$, this was done by Martino \cite{Mo} and Gordon \cite{GordonQuiver}, and we recall their description of $\mc{E}$. They have also been computed for many of the irreducible exceptional complex reflection groups by the third author \cite{Thiel:2015jl}, and by Bonnafé and the third author \cite{BT-CM-computational}. We describe the Poincar\'e polynomial of these arrangements and say when they are free. 

\begin{example}
As a concrete example, consider the CM-hyperplane arrangement $\mc{E}$ associated to the exceptional complex reflection group $G_8$. This consists of the following $25$ hyperplanes
$$
-\kappa_{0} + \kappa_{3} = 0, \; -\kappa_{0} + \kappa_{2} = 0, \; \kappa_{0} + \kappa_{2} - 2 \kappa_{3} = 0, \; \kappa_{2} - \kappa_{3} = 0, \quad \kappa_{0} - 3 \kappa_{1} + \kappa_{2} + \kappa_{3} = 0,
$$
$$
-2 \kappa_{0} + \kappa_{2} + \kappa_{3} = 0, \quad  - \kappa_{0} + 2\kappa_{2} - \kappa_{3} = 0, \quad  -\kappa_{0} + \kappa_{1} = 0, \quad  \kappa_{0} + \kappa_{1} - 2 \kappa_{3} = 0, 
$$
$$
\kappa_{1} - \kappa_{3} = 0, \quad  -2 \kappa_{0} + \kappa_{1} + \kappa_{3} = 0, \quad  \kappa_{0} + \kappa_{1} - 3 \kappa_{2} + \kappa_{3} = 0, \quad  \kappa_{0} + \kappa_{1} - 2 \kappa_{2} = 0,
$$
$$
\kappa_{1} - 2 \kappa_{2} + \kappa_{3} = 0, \quad  \kappa_{1} - \kappa_{2} = 0, \quad  \kappa_{0} + \kappa_{1} - \kappa_{2} - \kappa_{3} = 0, \quad  -\kappa_{0} + \kappa_{1} - \kappa_{2} + \kappa_{3} = 0, 
$$
$$
-2 \kappa_0 + \kappa_{1} + \kappa_{2} = 0, \;  \kappa_{0} + \kappa_{1} + \kappa_{2} - 3 \kappa_{3} = 0, \quad  \kappa_{1} + \kappa_{2} -  2\kappa_{3} = 0, \;  -\kappa_{0} + \kappa_{1} + \kappa_{2} - \kappa_{3} = 0, 
$$
$$
-3 \kappa_{0} + \kappa_{1} + \kappa_{2} + \kappa_{3} = 0, \;  - \kappa_{0} + 2 \kappa_{1} - \kappa_{3} = 0, \;  -\kappa_{0} + 2 \kappa_{1} - \kappa_{2} = 0, \; 2\kappa_{1} - \kappa_{2} - \kappa_{3} = 0.
$$ 
This is an arrangement in $\Q^4$ that is stable under the permutation action of the symmetric group $\s_4$. It contains the Coxeter arrangement of type $\mathsf{A}_3$ as a subarrangement, has Poincar\'e polynomial $(13t + 1) (11 t + 1) (t + 1)$, and is free. 
\end{example}

Though our focus is on hyperplane arrangements associated to symplectic quotient singularities, we would like to emphasise that the general theory exists for any singular conic symplectic variety. Thus, via symplectic geometry, one can produce a very large class of hyperplane arrangements. We believe that this rich source of arrangements should be of considerable interest to those interested in the combinatorics of hyperplane arrangements. 

\subsection*{Acknowledgments}

We are very grateful to Ivan Losev for fruitful conversations. We also thank Cédric Bonnaf\'e for explaining to us his joint work with Daniel Juteau, Emily Norton for pointing out a typo in the abstract, and Gunter Malle for several comments. The first author was supported by EPSRC grant EP-N005058-1. The second author was partly supported by NSF grant DMS-1406553.

\section{Symplectic singularities}\label{sec:sympsing}

A variety will mean an integral, separated scheme of finite type over $\C$. By a \textit{resolution of singularities} $\pi : Y \rightarrow X$, we mean a proper birational map $\pi$ from a smooth variety $Y$. If we require $\pi$ to be projective, we will explicitly say so. All vector spaces considered will be complex and finite-dimensional.  

Recall that a \textit{symplectic variety} is a normal variety $X$ over $\C$ such that:
\begin{enumerate}
\item[(a)] the smooth locus of $X$ is equipped with an algebraic symplectic $2$-form $\omega$;
\item[(b)] if $\rho : Y \rightarrow X$ is a resolution of singularities, then $\rho^* \omega$ extends to a regular $2$-form on $Y$.
\end{enumerate}
The resolution $\rho$ is said to be \textit{symplectic} if the extension $\omega'$ of $\rho^* \omega$ to $Y$ is everywhere non-degenerate. In particular, $Y$ is an algebraic symplectic manifold. Since $X$ is normal, the form $\omega$ makes $X$ into a Poisson variety.

A normal variety $Y$ is said to be \textit{$\Q$-factorial} if some multiple of each Weil divisor is Cartier. Since $Y$ is normal there is an embedding $\mathrm{Pic} (Y) \hookrightarrow \mathrm{Cl}(Y)$ and $Y$ is $\Q$-factorial if and only if the quotient group is torsion. In the case where both $\mathrm{Pic} (Y)$ and $\mathrm{Cl}(Y)$ have finite rank, $Y$ is $\Q$-factorial if and only if $\rk \mathrm{Pic} (Y) = \rk \mathrm{Cl}(Y)$. A normal variety $Y$ whose canonical divisor $K_Y$ is $\Q$-Cartier is said to have \textit{terminal singularities} if for any resolution of singularities $\pi : Z \rightarrow Y$, 
$$
K_Z = \pi^*(K_Y) + \sum_E a_E E
$$
with $a_E > 0$, where the sum is over all exceptional divisors $E$ of $\pi$. Here $\pi^*(K_Y) := \frac{1}{n} \pi^*(n K_Y)$ for some $n \gg 0$ such that $n K_Y$ is Cartier.

We say that the affine symplectic singularity $X$ has a \textit{conic} $\Cs$-action of weight $\ell$ if the $\Cs$-action makes $\C[X]$ an $\N$-graded connected algebra and the form $\omega$ is homogeneous of weight $\ell >0$. In particular, $X$ has a unique fixed point under a conic action; this is the cone point $o \in X$. By the minimal model programme \cite{BCHM}, we can fix a $\Q$-factorial terminalization $\rho : Y \rightarrow X$. Since $\rho$ is crepant, it is a Poisson morphism between symplectic varieties. Then $\mf{c} := H^2(Y;\C)$ is given the structure of an affine space. As noted previously, the universal graded Poisson deformation $\mc{Y}$ of $Y$ has base $\mf{c}$. As shown in \cite{Namikawa2}, there is a finite group $W$, called \textit{Namikawa's Weyl group} that acts faithfully as a (real) reflection group on $\mf{c}$. Namikawa \cite{Namikawa,Namikawa2} has shown that there is a universal graded Poisson deformation $\beta:\mc{X} \rightarrow \mf{c} / W$ of $X$ (so that $\beta^{-1}(0) \simeq X$). The conic action on $X$ lifts to a $\Cs$-action on $Y$ making $\rho$ equivariant. Let $\bnu : \mc{Y} \rightarrow \mf{c}$ be the universal graded Poisson deformation of $Y$. Then the action of $\Cs$ on $Y$ lifts to $\mc{Y}$ and $\bnu$ is equivariant for the weight $\ell$ action of $\Cs$ on $\mf{c}$. Summarizing, as in (2) of \cite{Namikawa3}, one has a commutative diagram 
$$
\xymatrix{
\mc{X} \ar[d]_{\beta} & \mc{Y} \ar[d]^{\bnu} \ar[l] \\
\mf{c} / W  & \ar[l] \mf{c}
}
$$
with all maps being $\Cs$-equivariant. Set $\mf{X} = \mc{X} \times_{\mf{c} / W} \mf{c}$ and write $\boldsymbol{\rho} : \mc{Y} \rightarrow \mf{X}$ for the corresponding morphism over $\mf{c}$. For each closed point $\bk \in \mf{c}$, we write $\bs{\rho}_{\bk} :  \mc{Y}_{\bk} \rightarrow \mf{X}_{\bk}$ for the corresponding morphism. Let $\mc{D}$ denote the set of points in $\mf{c}$ where $\bs{\rho}_{\bk}$ is \textit{not} an isomorphism. By \cite{Namikawa3}  the set $\mc{D}$ is precisely the set of points $\bk$ such that either a) $\mf{X}_{\bk}$ does not have terminal singularities; or b) is not $\Q$-factorial. 

Assume that there exists a connected one-dimensional torus $T$ acting by Hamiltonian automorphisms on $X$, commuting with the action of $\Cs$. This action lifts to $\mc{X}$, acting trivially on $\mf{c} / W$. The following proposition by Namikawa \cite{NamikawaPoissondeformations} will be important later.

\begin{prop}\label{prop:locallytrivial}
For all $y \in Y$, $d_y \bnu : T_y \mc{Y} \rightarrow T_{0} \mf{c}$ is surjective.  
\end{prop}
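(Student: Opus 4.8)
The plan is to deduce surjectivity from local triviality of the family $\bnu$, after first localising the problem to the singular points of $Y$. To set up the reduction, note that since $\bnu$ is flat with central fibre $\bnu^{-1}(0)=Y$, the Zariski tangent space of the fibre is exactly the kernel of the differential, $T_yY=\ker(d_y\bnu)$, so that
\[
\mathrm{rank}(d_y\bnu)=\dim T_y\mc{Y}-\dim T_yY .
\]
In particular, at any smooth point $y\in Y^{\mathrm{sm}}$ flatness together with smoothness of the fibre shows that $\bnu$ is a smooth morphism at $y$, whence $d_y\bnu$ is automatically surjective. Thus the entire content is concentrated on the singular locus $\Sigma=Y^{\mathrm{sing}}$, which — because $Y$ is a $\Q$-factorial terminalization, hence has terminal symplectic singularities — has codimension $\geq 4$.

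Next I would localise to the cone point. Fixing a basis $f_1,\dots,f_r$ of $\mf{c}^{*}$ (so $r=\dim\mf{c}$) and forming the section $s=d(\bnu^{*}f_1)\wedge\cdots\wedge d(\bnu^{*}f_r)$ of $\bigwedge^{r}\Omega_{\mc{Y}}$, the locus $Z$ where $d_y\bnu$ fails to be surjective is precisely the closed locus where $s$ vanishes in the fibre $\big(\bigwedge^{r}\Omega_{\mc{Y}}\big)\otimes k(y)$. By the previous paragraph $Z\subseteq\Sigma$, and since $\bnu$ is $\Cs$-equivariant the set $Z$ is $\Cs$-stable. As the conic action contracts every orbit to the cone point $o$, any nonempty closed $\Cs$-stable set must contain $o$. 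Hence it suffices to prove that $d_o\bnu$ is surjective, which by the displayed identity is equivalent to injectivity of the local Kodaira–Spencer map $T_0\mf{c}\to T^{1}_{(Y,o)}$: no nonzero direction in $\mf{c}$ deforms the local ring $\mathcal{O}_{Y,o}$ trivially to first order.

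The cleanest way to obtain this, and the route I would actually take, is to establish that $\bnu$ is locally trivial in the analytic topology near $o$: there is an analytic neighbourhood $U\ni o$ and an isomorphism $\bnu^{-1}(U)\cong U\times\mf{c}$ over $\mf{c}$, under which $\bnu$ becomes the projection and surjectivity of $d_o\bnu$ is immediate. Local triviality in turn reduces, via Namikawa's analytic-local product decomposition of $Y$ into a symplectic leaf times a transverse slice $S$, to the assertion that the terminal symplectic slice $S$ admits no nontrivial local Poisson deformation — equivalently, that all Poisson deformations of $Y$ arise from the global classes in $\mf{c}=H^{2}(Y,\C)$ and move only the cohomology class of the symplectic form, without altering the local analytic type.

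The main obstacle is exactly this last point: the local rigidity of terminal symplectic singularities under Poisson deformation. This is genuinely where the terminal, i.e. codimension $\geq 4$, hypothesis enters, and it is false without it — for the (canonical but non-terminal) $A_1$ surface singularity $xy=z^{2}$ the Poisson smoothing $xy=z^{2}+t$ has total space smooth and $d_o\bnu=0$ at the singular point, so surjectivity fails there. I would therefore expect to rely on Namikawa's analysis in \cite{NamikawaPoissondeformations}, where the vanishing of the relevant local Poisson cohomology (equivalently, the local triviality of the family) is established; the role of the reduction above is to show that this purely local statement is all that is required.
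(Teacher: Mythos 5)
Your final route --- deduce surjectivity from Namikawa's analytic local triviality of the Poisson deformation $\bnu$ --- is exactly the proof in the paper, which invokes \cite[Theorem 17]{NamikawaPoissondeformations} to produce, for \emph{every} $y \in Y$, an isomorphism $\widehat{\mc{Y}}_y \simeq \widehat{Y}_y \times \widehat{\mf{c}}_0$ over $\widehat{\mf{c}}_0$, whence surjectivity is immediate. Since that statement is local at every point of $Y$, none of your preliminary reductions are needed; unfortunately they are also where your write-up goes wrong. First, the localisation to ``the cone point $o$'' fails: $o$ is a point of $X$, not of $Y$, and $Y$ has no distinguished fixed point --- the fixed locus $Y^{\Cs}$ lies in $\rho^{-1}(o)$ but can be positive-dimensional (for the minimal resolution of $\C^2/\Z_2$ the exceptional curve is fixed pointwise by the conic action). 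A nonempty closed $\Cs$-stable subset of $Y$ therefore only contains \emph{some} fixed point (limits exist by properness of $\rho$ over the affine $X$, as in the proof of Proposition \ref{prop:smoothstrata}), not a prescribed one. Second, your Kodaira--Spencer reformulation is backwards: from the exact sequence
$$
0 \to T_y Y \to T_y \mc{Y} \stackrel{d_y \bnu}{\longrightarrow} T_0 \mf{c} \stackrel{\mathrm{ks}_y}{\longrightarrow} T^1_{(Y,y)},
$$
surjectivity of $d_y\bnu$ is equivalent to $\mathrm{ks}_y = 0$, i.e.\ to \emph{every} direction in $\mf{c}$ deforming the germ trivially to first order, not to injectivity of $\mathrm{ks}_y$. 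Your own $A_1$ example makes the point: there the Kodaira--Spencer map is injective and surjectivity fails.

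The more substantive gap is in what you identify as the key hypothesis. Terminality (singular locus of codimension $\geq 4$) alone does \emph{not} force local Poisson rigidity: the minimal nilpotent orbit closure $\overline{\mc{O}}_{\mathrm{min}} \subset \mf{sl}(3)$ has an isolated, hence terminal, symplectic singularity, yet it admits Poisson smoothings, so its Poisson deformations are certainly not locally trivial at $0$. What fails there is $\Q$-factoriality: it has the small resolution $T^*\mathbb{P}^2$. (This very germ appears in the paper, in the footnote to the dihedral section, as a Calogero--Moser singularity at a generic point of an $F$-hyperplane --- and those $\mf{X}_{\bk}$ do deform nontrivially near that point.) Accordingly, Namikawa's local triviality theorem requires $Y^{\mathrm{an}}$ to be $\Q$-factorial in addition to terminal, and the paper's proof begins precisely by securing this input via \cite[Corollary A.10]{NamikawaPoissondeformations}, which upgrades algebraic $\Q$-factoriality of the terminalization $Y$ to analytic $\Q$-factoriality. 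With that hypothesis supplied, and the two faulty reductions above deleted, your argument collapses to the paper's one-step proof.
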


\begin{proof}
First, we note by Corollary A.10 of \cite{NamikawaPoissondeformations} that $Y^{\mathrm{an}}$ is $\Q$-factorial. Restricting $\mc{Y}$ to the formal neighbourhood $\widehat{\mf{c}}_0$ of $0$ in $\mf{c}$, Theorem 17 of \cite{NamikawaPoissondeformations} says that $\mc{Y}$ is locally trivial in the analytic topology. In particular, if $\widehat{\mc{Y}}_y$, resp. $\widehat{Y}_y$, denotes the formal neighbourhood of $y$ in $\mc{Y}$, resp. in $Y$, this means that for each $y \in Y$, there is an isomorphism of formal schemes 
\begin{equation}\label{eq:smoothfactor}
\xymatrix{
\widehat{\mc{Y}}_y \ar[rr]^{\sim} \ar[dr]_{\bnu} & & \widehat{Y}_y \ \times \ \widehat{\mf{c}}_0 \ar[dl]^{\mathrm{pr}_2} \\
 & \widehat{\mf{c}}_0
}
\end{equation}
The proposition follows. 
\end{proof} 

For a variety $Z$, we write $\{ Z_i \}_{i = 1,2,\ds}$ for the \textit{singular stratification}. This is defined inductively by setting $Z_0 = Z_{\sm}$, the smooth locus of $Z$, and $Z_i$ to be the smooth locus of $(Z \smallsetminus \bigcup_{j < i} Z_j)_{\mathrm{red}}$. Here $(-)_{\mathrm{red}}$ is taking the reduced scheme structure. Then each $Z_i$ is a smooth variety, though in general disconnected with components of different dimension. Since it is canonically defined, each $Z_i$ is $G$-stable for any group $G$ acting on $Z$. Let $\bnu^{(i)}$ denote the restriction of $\bnu$ to the singular stratum $\mc{Y}_i$ of $\mc{Y}$. 

\begin{prop}\label{prop:smoothstrata} 
The morphism $\nu^{(i)} : \mc{Y}_i \rightarrow \mf{c}$ is smooth. 
\end{prop}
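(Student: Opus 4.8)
The plan is to reduce the assertion to a pointwise statement about differentials and then to combine the analytic local triviality underlying Proposition \ref{prop:locallytrivial} with the conic $\Cs$-action. Since each stratum $\mc{Y}_i$ is smooth and $\mf{c}$ carries the structure of an affine space, hence is smooth, the morphism $\nu^{(i)}\colon\mc{Y}_i\to\mf{c}$ is smooth if and only if its differential $d_p\nu^{(i)}\colon T_p\mc{Y}_i\to T_{\bnu(p)}\mf{c}$ is surjective at every closed point $p\in\mc{Y}_i$. Proposition \ref{prop:locallytrivial} already points toward this along the central fibre $Y=\bnu^{-1}(0)$; the real work is to propagate surjectivity to points lying over $\bk\neq 0$.

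First I would treat points of the central fibre. The key observation is that the canonical singular stratification is preserved by isomorphisms and is compatible with taking products by a smooth factor: if $B$ is smooth then $(A\times B)_i=A_i\times B$. Applying this to the analytic (equivalently formal) local triviality $\widehat{\mc{Y}}_y\cong\widehat{Y}_y\times\widehat{\mf{c}}_0$ of \eqref{eq:smoothfactor} for $y\in Y$, and using that $\widehat{\mf{c}}_0$ is smooth, the isomorphism identifies the stratum $\mc{Y}_i$ near $y$ with $Y_i\times\widehat{\mf{c}}_0$ and carries $\bnu$ to the projection $\mathrm{pr}_2$. In particular $\mc{Y}_i\cap Y=Y_i$, and for $y\in Y_i$ the morphism $\nu^{(i)}$ is locally the projection $Y_i\times\widehat{\mf{c}}_0\to\widehat{\mf{c}}_0$, which is a submersion; thus $d_y\nu^{(i)}$ is surjective.

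Now take an arbitrary $p\in\mc{Y}_i$ with $\bnu(p)=\bk$. Because the $\Cs$-action on $\mc{Y}$ is conic and $\bnu$ is equivariant for the weight-$\ell$ (with $\ell>0$) action on $\mf{c}$, the limit $p_0:=\lim_{t\to 0}t\cdot p$ exists and satisfies $\bnu(p_0)=\lim_{t\to0}t^\ell\bk=0$, so $p_0\in Y$. As in the previous paragraph, choose an analytic product neighbourhood $W\cong W_0\times D$ of $p_0$, with $W_0\subseteq Y$ open, $D\subseteq\mf{c}$ an open disc about $0$, $\bnu|_W=\mathrm{pr}_D$, and $\mc{Y}_i\cap W\cong (Y_i\cap W_0)\times D$. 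For $t\neq0$ sufficiently small one has $t\cdot p\in W$, and since the stratum $\mc{Y}_i$ is $\Cs$-stable we get $t\cdot p\in\mc{Y}_i\cap W$; hence $\nu^{(i)}$ coincides near $t\cdot p$ with the submersion $(Y_i\cap W_0)\times D\to D$, so $d_{t\cdot p}\nu^{(i)}$ is surjective. Finally, the isomorphisms $m_t\colon\mc{Y}_i\xrightarrow{\sim}\mc{Y}_i$ and multiplication by $t^\ell$ on $\mf{c}$ intertwine $\nu^{(i)}$, so surjectivity of the differential at $t\cdot p=m_t(p)$ is equivalent to surjectivity at $p$. This yields the claim at every point.

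The two steps I expect to require the most care are the compatibility of the canonical singular stratification with the analytic local product decomposition — one must verify that analytification and completion commute with the inductively defined smooth loci, so that the stratum $\mc{Y}_i$ genuinely becomes $Y_i\times\widehat{\mf{c}}_0$ rather than something coarser — and the justification that the conic $\Cs$-action has the contracting property making $p_0=\lim_{t\to0}t\cdot p$ exist in $Y$. The latter I would deduce from the fact that $\brho\colon\mc{Y}\to\mf{X}$ is proper while $\mf{X}$ carries a conic action contracting to its central fibre, so that the limit exists by the valuative criterion of properness. Everything else is a formal consequence of the $\Cs$-equivariance.
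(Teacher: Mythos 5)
Your proof is correct and rests on exactly the same pillars as the paper's: Namikawa's local triviality of $\bnu$ along the central fibre (Proposition \ref{prop:locallytrivial}, diagram (\ref{eq:smoothfactor})), the compatibility of the canonical singular stratification with isomorphisms and with products by a smooth factor, and the contracting $\Cs$-action to spread smoothness from $Y$ to all of $\mc{Y}_i$. The only divergence is in the propagation step, and it is worth recording. The paper argues globally: smoothness of $\bnu^{(i)}$ at points of $Y_i$ makes the non-smooth locus a proper, closed, $\Cs$-stable subset of $\mc{Y}_i$, and it is then dismissed by the assertion that every point of $\mc{Y}_i$ flows under $t \to 0$ to a point of $Y_i$. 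You argue pointwise: flow $p$ until $t \cdot p$ lies in an analytic product neighbourhood of the limit $p_0 \in Y$, read off surjectivity of the differential there, and transport it back to $p$ through the equivariance isomorphism $m_t$. Your version is slightly more careful on a genuinely delicate point: you only need $p_0 \in Y$, which is immediate from $\bnu(t\cdot p) = t^{\ell}\bk \to 0$ together with existence of the limit, whereas the paper's claim that the limit of a point of $\mc{Y}_i$ lies in $Y_i$ — i.e. stays in the same stratum — is not automatic (limits under a contracting action can in general drop into deeper strata) and is itself most naturally justified by the product-neighbourhood argument you give. Likewise, your justification that limits exist at all (non-negative grading on $\C[\mf{X}]$ plus the valuative criterion applied to the proper morphism $\brho$) fills in a step the paper merely asserts. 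So: same route, with a cleaner and marginally more rigorous endgame.
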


\begin{proof}
The singular stratification is compatible with passing to formal neighbourhoods, i.e. $(\widehat{\mc{Y}}_y)_i = \widehat{(\mc{Y}_i)_y} =: \widehat{\mc{Y}}_{i,y}$. We note that if $C$ is any connected component of $\mc{Y}_i$, then $C \cap Y \neq \emptyset$. This follows from the fact that $C$ is $\Cs$-stable and the limit as $t \rightarrow 0$ of any point in $C$ under this action exists, belonging to $C \cap Y$. Let $y \in C \cap Y$. Then, the fact that each stratum $\mc{Y}_i$ is canonically defined implies that the isomorphism of formal schemes (\ref{eq:smoothfactor}) restricts to a commutative diagram 
$$
\xymatrix{
\widehat{\mc{Y}}_{i,y} \ar[rr]^{\sim} \ar[dr]_{\bnu} & & \widehat{Y}_{i,y} \times \widehat{\mf{c}}_0 \ar[dl]^{\mathrm{pr}_2} \\
 & \widehat{\mf{c}}_{0}.
}
$$
In particular, $d_y \bnu^{(i)}$ is surjective. Since $C$ is smooth, standard results, e.g. \cite[III, Proposition 10.4]{Hartshorne} imply that $\bnu^{(i)}$ is smooth at $y$.  This implies that $\Omega_{\mc{Y}_i / \mf{c}}$ is locally free in some open neighbourhood of $Y_i$. Thus, the locus where $\Omega_{\mc{Y}_i / \mf{c}}$ is not locally free is some proper closed subset of $\mc{Y}_i$. Since $\bnu^{(i)}$ is an equivariant map, this locus is also $\Cs$-stable. Every point in $\mc{Y}_i$ has a limit in $Y_i$. Thus, we conclude that $\Omega_{\mc{Y}_i / \mf{c}}$ is locally free.  
\end{proof}  

\begin{cor}\label{cor:smoothmorphism}
If $Y$ is smooth then the morphism $\bnu : \mc{Y} \rightarrow \mf{c}$ is smooth. 
\end{cor}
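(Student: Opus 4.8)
The plan is to reduce everything to Proposition \ref{prop:smoothstrata}. Recall that $\bnu^{(0)}$ is the restriction of $\bnu$ to the smooth locus $\mc{Y}_0 = \mc{Y}_{\sm}$. Thus if I can show that $Y$ smooth forces $\mc{Y}_0 = \mc{Y}$, i.e. that $\mc{Y}$ is itself smooth, then Proposition \ref{prop:smoothstrata} with $i = 0$ gives at once that $\bnu = \bnu^{(0)} \colon \mc{Y} \to \mf{c}$ is smooth. So the entire content is the implication ``$Y$ smooth $\Rightarrow$ $\mc{Y}$ smooth''.

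First I would show that $\mc{Y}$ is smooth along the central fibre $Y = \bnu^{-1}(0)$. This is immediate from local triviality: the isomorphism (\ref{eq:smoothfactor}) established in the proof of Proposition \ref{prop:locallytrivial} identifies, for each $y \in Y$, the formal neighbourhood $\widehat{\mc{Y}}_y$ with $\widehat{Y}_y \times \widehat{\mf{c}}_0$. Since $Y$ is smooth, $\widehat{Y}_y$ is regular, so $\widehat{\mc{Y}}_y$ is regular and $y$ lies in $\mc{Y}_{\sm} = \mc{Y}_0$. Hence $Y \subseteq \mc{Y}_0$; equivalently, $Y$ is disjoint from the closed subset $\mc{Y} \smallsetminus \mc{Y}_0 = \bigcup_{i \ge 1} \mc{Y}_i$.

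Second I would propagate smoothness off the central fibre using the $\Cs$-action, exactly as in the proof of Proposition \ref{prop:smoothstrata}. The singular locus $\mc{Y} \smallsetminus \mc{Y}_0$ is closed and, being canonically defined, $\Cs$-stable. On the other hand every point $z \in \mc{Y}$ has a limit $\lim_{t \to 0} t \cdot z$, and since $\bnu$ is $\Cs$-equivariant for the weight-$\ell$ action on $\mf{c}$ (with $\ell > 0$) this limit lies in $\bnu^{-1}(0) = Y$. A nonempty closed $\Cs$-stable subset of $\mc{Y}$ must therefore meet $Y$; as $\mc{Y} \smallsetminus \mc{Y}_0$ does not, we conclude $\mc{Y} \smallsetminus \mc{Y}_0 = \emptyset$, i.e. $\mc{Y}$ is smooth. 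Applying Proposition \ref{prop:smoothstrata} with $i = 0$ then finishes the argument. (Alternatively one could finish by hand: with $\mc{Y}$ and $\mf{c}$ both smooth, $d_z \bnu$ is surjective on $Y$ by Proposition \ref{prop:locallytrivial}, the non-submersive locus is closed, $\Cs$-stable and disjoint from $Y$, hence empty by the same limit argument, and \cite[III, Proposition 10.4]{Hartshorne} then yields the smoothness of $\bnu$.)

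The main obstacle --- really the only nonformal point --- is the $\Cs$-contraction step: one must know that for every $z \in \mc{Y}$ the limit $\lim_{t \to 0} t \cdot z$ exists and lands in the central fibre $Y$. This is guaranteed by the conic structure, the positivity $\ell > 0$ of the weight on $\mf{c}$ ensuring attracting limits into $\bnu^{-1}(0)$; it is precisely the mechanism already invoked in Proposition \ref{prop:smoothstrata}, and once granted, everything else is a formal consequence of the local triviality of $\bnu$ near $Y$.
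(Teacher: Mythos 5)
Your proof is correct and is essentially the argument the paper intends (it states the corollary without proof, as an immediate consequence of Proposition \ref{prop:smoothstrata}): the formal local triviality (\ref{eq:smoothfactor}) shows $\mc{Y}$ is smooth along $Y$, the $\Cs$-contraction onto the central fibre --- exactly the mechanism used in the proof of Proposition \ref{prop:smoothstrata} --- forces the singular locus of $\mc{Y}$ to be empty, and then $\mc{Y} = \mc{Y}_0$ so that $\bnu = \bnu^{(0)}$ is smooth. Both your main route and your parenthetical alternative via \cite[III, Proposition 10.4]{Hartshorne} are sound.
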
 

Regarding the morphisms $\beta : \mc{X} \rightarrow \mf{c} / W$ and $\eta : \mf{X} \rightarrow \mf{c}$, it can happen in examples that $\mc{X}$ is smooth (though this is certainly not always the case), in which case $\beta$ is \textit{not} a smooth morphism. On the other hand, $d_x \eta$ will always be surjective for $x \in X \subset \mf{X}$, and hence $\mf{X}$ is never smooth. This behavior is already apparent for $X$ the Kleinian singularity $\C^2 / \Z_2$.

\subsection{Hyperplane arrangements}\label{sec:hyperplanes}

Recall that $\mc{D}$ is the locus of points $\bk \in \mf{c}$ such that $\mf{X}_{\bk}$ does not have $\Q$-factorial terminal singularities, i.e. either $\mf{X}_{\bk}$ is not $\Q$-factorial, or it does not have terminal singularities. By \cite{Namikawa3} the set $\mc{D}$ is a union of finitely many hyperplanes, which we call the \textit{Namikawa hyperplanes} of $X$. 

\begin{defn}
A Namikawa hyperplane $L \subset \mc{D}$ is said to be a \textit{$T$-hyperplane} if $\mf{X}_{\bk}$ has non-terminal singularities for all $\bk \in L$. Otherwise, $L$ is said to be an $F$-hyperplane. 
\end{defn}

\begin{remark}
We note that if a generic point $\bk$ of a Namikawa hyperplane $L$ is such that $\mf{X}_{\bk}$ has non-terminal singularities, then $\mf{X}_{\bk}$ has non-terminal singularities for all $\bk \in L$. One can see this from the fact that since $\mf{X}_{\bk}$ is a symplectic variety, it has terminal singularities if and only if the singular locus has codimension at least $4$, see \cite{NamikawaNote}. If $\bk$ is a generic point of an $F$-hyperplane, then $\mf{X}_{\bk}$ is not $\Q$-factorial. However, there will in general be other points $\bk' \in L$ for which $\mf{X}_{\bk'}$ is $\Q$-factorial. That is, the locus in $\mf{c}$ where $\mf{X}_{\bk}$ is $\Q$-factorial is neither open nor closed (but it is dense). This can easily be seen for the quotient singularities we consider below. 
\end{remark} 

Recall that Namikawa's Weyl group $W$ acts on $\mf{c}$, and it is shown in \cite{Namikawa2} that the subset $\mc{D}$ is $W$-stable. Therefore $W$ permutes the Namikawa hyperplanes of $X$. 

\subsection{$T$-actions}
Assume now that $T$ is a torus acting by Hamiltonian automorphisms on $\mc{Y}$, $\mf{X}$ etc. such that $\mf{X}^T_{\bk}$ is a finite set for all $\bk \in \mf{c}$. Since $\mc{Y}_{\bk} = \mf{X}_{\bk}$ for generic $\bk$, this implies that $\mc{Y}^T_{\bk}$ is a finite set for generic $\bk$. The following proposition is the key to proving the main result of the article. 

\begin{prop}\label{prop:Ysmoothfinite}
$|\mc{Y}^T_{\bk}| < \infty$ is independent of $\bk$. 
\end{prop}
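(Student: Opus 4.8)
The plan is to stratify $\mc{Y}^T$ by the singular stratification and to show that $\bnu^T := \bnu|_{\mc{Y}^T} : \mc{Y}^T \to \mf{c}$ is, on each connected component, an isomorphism onto $\mf{c}$. The independence of $|\mc{Y}^T_{\bk}|$ is then immediate, since each component contributes exactly one point to every fibre.

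First I would observe that, because the singular stratification is canonical, each stratum $\mc{Y}_i$ is $T$-stable, so the fixed locus decomposes set-theoretically as $\mc{Y}^T = \bigsqcup_i \mc{Y}_i^T$. By Proposition \ref{prop:smoothstrata} the morphism $\nu^{(i)} : \mc{Y}_i \to \mf{c}$ is smooth, and since $T$ acts trivially on $\mf{c}$ it acts along the fibres of $\nu^{(i)}$. As the fixed locus of a torus acting fibrewise on a smooth family is again smooth over the base, each $\mc{Y}_i^T \to \mf{c}$ is a smooth morphism.

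Next I would run the $\Cs$-contraction argument exactly as in the proof of Proposition \ref{prop:smoothstrata}: every connected component $C$ of $\mc{Y}_i^T$ is $\Cs$-stable, the limit $\lim_{t \to 0} t \cdot c$ exists for all $c \in C$ and lies in $C \cap Y \subseteq Y^T$, and connectedness forces $C \cap Y$ to be a single $T$-fixed point $y_0$. Since $\mc{Y}_i^T \to \mf{c}$ is smooth, hence open, and $\Cs$-equivariant, the image of $C$ is a $\Cs$-stable open subset of $\mf{c}$ containing $0 = \bnu(y_0)$, so it is all of $\mf{c}$. Because $\mc{Y}_{\bk} = \mf{X}_{\bk}$ for generic $\bk$ (there $\brho_{\bk}$ is an isomorphism), the set $\mc{Y}^T_{\bk}$ is finite for generic $\bk$; together with smoothness and surjectivity of $C \to \mf{c}$ this forces $\dim C = \dim \mf{c}$. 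Hence each $C \to \mf{c}$ is étale, and in particular every fibre $\mc{Y}^T_{\bk}$ is finite.

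Finally I would upgrade \emph{étale} to \emph{isomorphism} using the conical geometry: since $C$ is contracted by $\Cs$ to the single point $y_0$, it is affine with connected non-negatively graded coordinate ring, and $\bnu^T|_C : C \to \mf{c}$ is a $\Cs$-equivariant étale morphism to the affine space $\mf{c}$ sending $y_0$ to $0$. Dominance gives injectivity of the comorphism, while étaleness at the vertex together with graded Nakayama gives surjectivity, so $\bnu^T|_C$ is an isomorphism. Thus $\mc{Y}^T \cong \mf{c} \times S$ with $S$ the finite set of connected components, whence $|\mc{Y}^T_{\bk}| = |S| = |Y^T|$ for all $\bk$. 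The main obstacle is precisely this last step: smoothness and the $\Cs$-symmetry already show that all fibres are finite and that the nonzero ones are mutually isomorphic, but ruling out a drop in cardinality at special $\bk$ (in particular at $\bk = 0$) requires the conical rigidity forcing each component to map isomorphically onto $\mf{c}$ rather than as a nontrivial étale cover.
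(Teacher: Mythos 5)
Your proposal is correct, and its first three steps coincide exactly with the paper's proof: the paper likewise stratifies by the singular stratification, notes that each $\mc{Y}_i^T$ is $T$-stable and smooth, uses Proposition \ref{prop:smoothstrata} to conclude that $\bnu^{(i)}|_{\mc{Y}_i^T}$ is smooth (surjectivity of the differential survives restriction to $T$-invariants because $T$ acts trivially on $\mf{c}$), and deduces from finiteness of the generic fibre that each $\bnu^{(i)}|_{\mc{Y}_i^T}$ is \'etale with all fibres finite. The genuine difference is the endgame. The paper stops there, asserting that \'etaleness alone already makes $|\mc{Y}_{\bk}^T \cap \mc{Y}_i|$ independent of $\bk$; as you rightly observe, this implication fails for a general \'etale morphism (an open immersion is \'etale, and fibre counts of quasi-finite \'etale maps can drop at special points), so some additional input from the conic structure is needed, and your proposal supplies it: via the $\Cs$-contraction, each component $C$ of $\mc{Y}_i^T$ meets $Y$ in a single fixed point and then maps isomorphically onto $\mf{c}$, giving the stronger conclusion that $\mc{Y}^T \to \mf{c}$ is a trivial covering $\mf{c} \times S$. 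What your route buys is precisely the closing of this gap; what it costs is reliance on two Bia\l ynicki-Birula-type facts that you assert rather than prove: (i) that connectedness forces $|C \cap Y| = 1$ --- this is not formal, but does follow because \'etaleness of $C \to \mf{c}$ identifies $T_{y_0}C$ with $\mf{c}$ equivariantly at each of the isolated, $\Cs$-fixed points $y_0$ of $C \cap Y$, so all tangent weights there equal $\ell > 0$, each attracting cell is open of full dimension, and a connected $C$ has room for only one cell; and (ii) that a smooth quasi-projective variety contracted by $\Cs$ onto a single fixed point with all tangent weights positive is affine with connected, non-negatively graded coordinate ring (the BB cell of such a fixed point is equivariantly an affine space). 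With these two facts cited or proved, your dominance-plus-graded-Nakayama finish is complete and correct, and is in fact more careful at the crucial final step than the paper's own one-sentence conclusion.
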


\begin{proof}
The group $T$ preserves each of the strata $\mc{Y}_i$. Since the scheme $\mc{Y}_i$ is smooth, $T_y \mc{Y}^T_i = (T_y \mc{Y})^T$ for all $y \in \mc{Y}_i^T$. By proposition \ref{prop:smoothstrata}, the morphism $\bnu^{(i)}$ is $T$-equivariant and smooth. Recalling that $T$ acts trivially on $\mf{c}$, this implies that $d \bnu^{(i)} |_{\mc{Y}^T_i}$ is also surjective. We deduce, as in proposition \ref{prop:smoothstrata}, that $\bnu^{(i)} |_{\mc{Y}^T_i}$ is smooth. The generic fiber of $ \bnu^{(i)} |_{\mc{Y}^T_i}$ is finite. Thus, every fiber of $ \bnu^{(i)} |_{\mc{Y}^T_i}$ is finite, and $ \bnu^{(i)} |_{\mc{Y}^T_i}$ is \'etale. This implies that 
$$
\left| \left( \bnu^{(i)} |_{\mc{Y}^T_i} \right)^{-1} \right| = | \mc{Y}_{\bk}^T \cap \mc{Y}_i |
$$
is independent of $\bk$. Since $| \mc{Y}_{\bk}^T| = \sum_i | \mc{Y}_{\bk}^T \cap \mc{Y}_i |$, the result follows. 
\end{proof}

\begin{remark}
In the case where $\mc{Y}$ is not smooth, it is still true that $d_y \bnu |_{\mc{Y}^T} : T_y \mc{Y}^T \rightarrow T_0 \mf{c}$ is surjective for $y \in Y$, but only when $\mc{Y}^T$ is considered as a non-reduced scheme. It is not clear that  $d_y \bnu |_{\mc{Y}^T_{\mathrm{red}}}$ is surjective. 
\end{remark}

\subsection{Symplectic leaves}

Being symplectic varieties, the spaces $Y$ and $X$, as above, have a finite stratification by symplectic leaves. These leaves can be characterized as the connected components of the rank stratification. That is, we say that $x \in X_p$ if and only if the rank of the Poisson bracket at $x$ is $p \in \{ 0, \ds, \dim X \}$. Then the symplectic leaves of $X$ are the connected components of smooth, locally closed subvarieties $X_p$. By Lemma 3.1 (4) and Proposition 3.7 of \cite{PoissonOrders}, the closure of a leaf is a union of leaves. The following results will be needed later. 

\begin{lem}\label{lem:dimleaves}
Let $X$ and $Y$ be symplectic varieties, with $X$ affine and assume that $f : Y \rightarrow X$ is a dominant Poisson morphism. If $p \in \mc{L} \subset Y$ and $f(p) \in \mc{M} \subset X$, with $\mc{L}$ and $\mc{M}$ symplectic leaves, then 
$$
\dim \mc{L} \ge \dim \mc{M}.
$$ 
\end{lem}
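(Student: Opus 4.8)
The plan is to prove the stronger, pointwise statement that the tangent space to $\mc{M}$ at $f(p)$ is contained in the image under the differential $d_pf$ of the tangent space to $\mc{L}$ at $p$. The dimension inequality then follows at once, since the leaves are smooth and
$$
\dim \mc{M} = \dim T_{f(p)}\mc{M} \le \dim d_pf\big(T_p\mc{L}\big) \le \dim T_p\mc{L} = \dim \mc{L}.
$$

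The main geometric input is the description, valid for symplectic varieties, of the tangent space to a symplectic leaf at a point $x$ as the span of the values at $x$ of Hamiltonian vector fields: writing $H_a$ for the Hamiltonian vector field of $a$, one has $T_x(\text{leaf}) = \{\, H_a(x) : a \in \mathcal{O}_X \,\}$. Here I would use that $X$ is affine, so that global regular functions already suffice to span; equivalently, this says that the image of the anchor map $T_x^*X \to T_xX$ induced by the Poisson bivector is exactly the tangent space to the leaf through $x$, its rank being $\dim(\text{leaf})$. This is precisely the content of the symplectic-leaf description recalled at the start of this subsection, following the Brown--Gordon results on Poisson orders.

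With this in hand, I would record the standard compatibility of a Poisson morphism with Hamiltonian flows. Since $f$ is Poisson, $\{f^*a, f^*b\}_Y = f^*\{a,b\}_X$ for all $a,b \in \mathcal{O}_X$, and hence the Hamiltonian vector field $H_{f^*a}$ on $Y$ is $f$-related to $H_a$ on $X$, that is $d_pf\big(H_{f^*a}(p)\big) = H_a(f(p))$. Because $H_{f^*a}(p) \in T_p\mc{L}$, each generator $H_a(f(p))$ of $T_{f(p)}\mc{M}$ lies in $d_pf\big(T_p\mc{L}\big)$. This yields the desired containment $T_{f(p)}\mc{M} \subseteq d_pf\big(T_p\mc{L}\big)$, and hence the lemma.

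I expect the main obstacle to lie entirely in justifying the leaf-tangent-space description at a point $p$ that is singular in $Y$ (equivalently with $f(p)$ singular in $X$), where the Zariski cotangent space is strictly larger than $\dim X$. There one must know that the symplectic leaves really are the smooth rank strata on which the restricted bracket is non-degenerate, so that the anchor image is genuinely the leaf tangent space and the extra cotangent directions produced by the singularity lie in the kernel of the bracket and do not inflate its rank. Once this identification is granted, the argument above is formal, and everything else -- the spanning by global Hamiltonian fields (via affineness of $X$) and the $f$-relatedness of Hamiltonian vector fields -- is routine.
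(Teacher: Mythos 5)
Your proof is correct, and it takes the dual route to the paper's own argument rather than reproducing it. The paper argues on cotangent spaces: using dominance to get an injection $f^* : \mc{O}_{X,f(p)} \to \mc{O}_{Y,p}$ of Poisson algebras, it observes that the bracket induces skew-symmetric forms on $\mf{n}/\mf{n}^2$ and $\mf{m}/\mf{m}^2$ which $f^*$ intertwines, identifies the leaf cotangent spaces $T^*_{f(p)}\mc{M}$ and $T^*_p\mc{L}$ as the quotients by the kernels of these forms, and concludes that the induced map $T^*_{f(p)}\mc{M} \to T^*_p\mc{L}$ is a morphism of symplectic vector spaces, hence injective. You argue on tangent spaces: the leaf tangent space is the span of values of Hamiltonian vector fields (the anchor image), and $f$-relatedness $d_pf\bigl(H_{f^*a}(p)\bigr) = H_a(f(p))$ gives the containment $T_{f(p)}\mc{M} \subseteq d_pf\bigl(T_p\mc{L}\bigr)$. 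These are two formulations of one fact---your containment is exactly the dual of the paper's injection---and both rest on the same inputs, namely the pointwise rank description of leaves from \cite{PoissonOrders} and the Poisson property of $f$. Both versions also yield the stronger conclusion that $d_pf$ restricts to a surjection $T_p\mc{L} \to T_{f(p)}\mc{M}$, which is precisely what the paper later extracts from this lemma (the surjectivity of $d_yf$ and the ``no nontrivial kernel'' step in the proof of Theorem \ref{thm:unionleaves}). Your route has two modest advantages: it never uses dominance of $f$ (only that it is Poisson; affineness of $X$ enters only as a convenience, to span by global Hamiltonian fields), and it makes explicit that $f$ carries $\mc{L}$ infinitesimally into $\mc{M}$, which is exactly the point needed for the paper's induced map on cotangent quotients to be well defined---a step the paper's write-up passes over silently. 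The caveat you flag, that at a singular point the anchor image really is the leaf tangent space with the excess cotangent directions lying in the kernel of the form, is genuine, but it is supplied by the rank-stratification description of leaves recalled at the start of this subsection, and the paper's own proof relies on the same identification on the cotangent side.
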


\begin{proof}
Let $\mf{m} \subset \mc{O}_{Y,p}$ and $\mf{n} \subset \mc{O}_{X,f(p)}$ be the corresponding maximal ideals. Since $f$ is dominant, we have an injective morphism $f^* : \mc{O}_{X,\pi(p)} \rightarrow \mc{O}_{Y,p}$ of Poisson algebras. The bracket defines a skew-symmetric form on $T^*_p Y = \mf{m} / \mf{m}^2$ by 
$$
\{ \overline{u}, \overline{v} \} :=  \{ u, v \} \ \mathrm{mod} \ \mf{m}, 
$$
and similarly for $T^*_{f(p)} X$. The natural map $f^* : \mf{n} / \mf{n}^2 \rightarrow \mf{m} / \mf{m}^2$ intertwines forms. Thus, it induces a morphism 
$$
T^*_{f(p)} \mc{M} = (\mf{n} / \mf{n}^2) / \Ker \{ - , - \} \rightarrow T^*_p \mc{L} = (\mf{m} / \mf{m}^2) / \Ker \{ - , - \}
$$
of symplectic vector spaces. This must be injective. Since the leaf $\mc{M}$ is smooth, $\dim \mathcal{M} = \dim T_{f(p)}^*\mathcal{M}$ and similarly for $\mathcal{L}$. The result follows. 
\end{proof}

\begin{thm}\label{thm:unionleaves}
Let $X$ and $Y$ be symplectic varieties, with $X$ affine and assume that $f : Y \rightarrow X$ is a projective birational Poisson morphism. The locus in $X$ where $f$ is not an isomorphism is a union of symplectic leaves. 
\end{thm}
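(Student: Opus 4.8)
The plan is to prove that the non-isomorphism locus $Z := \{x \in X : f \text{ is not an isomorphism over a neighbourhood of } x\}$ is a union of symplectic leaves by showing that membership in $Z$ is constant along each leaf of $X$. As a first step I would identify $Z$ more concretely: since $f$ is projective, hence proper, and birational with $X$ normal, one has $f_* \mc{O}_Y = \mc{O}_X$, and Zariski's Main Theorem gives that $f$ is an isomorphism exactly over the open locus where the fibres are finite (they are automatically single reduced points by connectedness). Thus $Z = \{x \in X : \dim f^{-1}(x) > 0\}$, and it suffices to prove that $x \mapsto \dim f^{-1}(x)$ is constant on each symplectic leaf $\mc{M}$ of $X$.

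The key mechanism is that $X$ is affine, so it carries many global Hamiltonians. For $g \in \C[X]$ let $\xi_g := \{g,-\}$ denote the associated Hamiltonian vector field on $X$; this is a global regular vector field even at singular points, since the Poisson bracket is a regular biderivation on all of $X$. Because $f$ is proper birational and $X$ is normal affine, every global function on $Y$ is a pullback $f^* g$, and $\xi_{f^* g}$ is $f$-related to $\xi_g$ as $f$ is Poisson. Consequently the (local analytic) flows $\psi_t$ of $\xi_g$ on $X$ and $\phi_t$ of $\xi_{f^*g}$ on $Y$ satisfy $f \circ \phi_t = \psi_t \circ f$, whence $f^{-1}(\psi_t(x)) = \phi_t(f^{-1}(x))$; in particular the fibre over $x$ and the fibre over $\psi_t(x)$ are analytically isomorphic. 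Now the vector fields $\xi_g|_{\mc{M}}$ span $T_x \mc{M}$ at every point of $\mc{M}$, this being exactly the characteristic distribution cut out by the Poisson structure, so by the orbit theorem the Hamiltonian flows act transitively on the connected manifold $\mc{M}$. Moving $x$ to an arbitrary $x' \in \mc{M}$ along a finite chain of such flows yields $f^{-1}(x) \cong f^{-1}(x')$, so $\dim f^{-1}$ is constant on $\mc{M}$; therefore $Z$ is the union of those leaves over which $f$ fails to be an isomorphism.

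The main obstacle is to make the passage between the algebraic and the analytic categories rigorous over the singular points of $X$ and $Y$. The vector fields are genuinely global and regular, but their flows exist only locally and analytically, and I must ensure they connect all of a leaf and really transport whole fibres. Here I would use properness: the fibres of $f$ over a compact path in $\mc{M}$ joining $x$ to $x'$ form a compact subset of $Y$, so the flow on $Y$ following this path stays in a compact region and is defined along the entire path, producing an honest analytic isomorphism $f^{-1}(x) \cong f^{-1}(x')$. A second point requiring care is that the leaves are defined as connected components of the rank stratification, so to invoke the orbit theorem I would use the identification of these strata with the orbits of the global Hamiltonian flows, available for the varieties under consideration by \cite{PoissonOrders}; this is also where the hypothesis that $X$ is affine is essential. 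Alternatively, one could characterise $Z$ as the image of the exceptional locus and combine the fact that $f$ maps leaves into leaves with the dimension estimate of Lemma \ref{lem:dimleaves}, but the Hamiltonian-flow approach seems the most direct.
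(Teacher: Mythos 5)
Your proof is correct, but it takes a genuinely different route from the paper's. The paper stays entirely algebraic and argues by contradiction: after using Zariski's main theorem and Lemma \ref{lem:dimleaves} to show that $f$ is an isomorphism over the smooth locus of $X$, it shrinks $X$ so that a putative bad leaf $\mc{M}$ becomes the unique closed leaf, produces a leaf $\mc{L} \subset Y$ with $f|_{\mc{L}} : \mc{L} \rightarrow \mc{M}$ an isomorphism, and then uses the injectivity of $f^*$ on symplectic cotangent spaces (the Poisson hypothesis, via the proof of Lemma \ref{lem:dimleaves}) to show that any leaf $\mc{L}_1$ meeting $f^{-1}(m_0)$ in a dense open subset must dominate $\mc{M}$, forcing $\mc{L}_1 = \mc{L}$ and contradicting $\dim f^{-1}(m_0) > 0$. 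You instead prove the stronger assertion that all fibres of $f$ over a given leaf are mutually (analytically) isomorphic, by transporting fibres along Hamiltonian flows lifted from $X$ to $Y$; since, as you correctly note, the non-isomorphism locus is exactly $\{ x \in X : \dim f^{-1}(x) > 0 \}$, the theorem follows. What your route buys: a stronger conclusion (constancy of the whole fibre along each leaf, in the spirit of Kaledin's local triviality results), no localization or minimal-closed-leaf bookkeeping, and no use whatsoever of the leaf structure of $Y$ (only $X$ needs finitely many leaves). What it costs: analytic input the paper never needs, namely (i) integration of holomorphic vector fields on singular complex spaces to local flows (Kaup's theorem --- you should cite this explicitly, since your $\xi_g$ must flow through singular points of $X$ and $Y$), (ii) properness of $f$ in the analytic topology together with the escape lemma, to guarantee that lifted integral curves are defined along entire paths in a leaf, and (iii) the identification of the connected components of the rank stratification with the orbits of global Hamiltonian flows, which does hold for affine Poisson varieties with finitely many leaves by \cite{PoissonOrders}, but is a strictly stronger input than the closure property which is all the paper extracts from that reference. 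All three are citable facts rather than gaps, so your argument is sound; just make those citations explicit in the final write-up.
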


\begin{proof}
Recall that the smooth locus $X_{\sm}$ and $Y_{\sm}$ are symplectic leaves. We begin by showing that $f$ is an isomorphism over $X_{\sm}$. Since $X$ is normal, it suffices to show that $|f^{-1}(x)| = 1$ for $x \in X_{\sm}$. Also note that the conditions of Zariski's main theorem are satisfied. Thus, if $|f^{-1}(x)| > 1$ then $\dim |f^{-1}(x)| \ge 1$ since the fibers of $f$ are connected. Choose a leaf $\mc{L} \subset Y$ such that $\mc{L} \cap f^{-1}(x)$ is open (and non-empty) in $f^{-1}(x)$. Then Lemma \ref{lem:dimleaves} implies that $\dim \mc{L} \ge \dim X = \dim Y$. Thus, $\mc{L} = Y_{\sm}$. Moreover, the proof of Lemma \ref{lem:dimleaves} shows that $d_y f : T_y Y_{\sm} \rightarrow T_x X_{\sm}$ is surjective at $y \in  \mc{L} \cap f^{-1}(x)$. Hence $f$ is étale at $y$. But $f$ is also birational. Thus, $f$ is an isomorphism there. 

If the statement of the theorem is not true, then there must exist some leaf $\mc{M} \subset X$ such that $f$ is an isomorphism over a generic point of $\mc{M}$, but there exists $s_0 \in \mc{M}$ for which $\dim f^{-1}(m_0) > 0$. Replacing $X$ by a sufficiently small affine open neighborhood of $m_0$ and $Y$ by the preimage of this neighborhood, we may assume that $\mc{M}$ is closed in $X$. Moreover, we may assume that $\mc{M}$ is the unique closed leaf and all other leaves have strictly greater dimension. Now if $m \in \mc{M}$ is generic, then there exists a leaf $p \in \mc{L} \subset Y$ with $\dim \mc{L} = \dim \mc{M}$ and $f(p) = m$. Notice that $\mc{L}$ is closed in $Y$. Otherwise, there exists $\mc{L}' \subset \overline{\mc{L}} \smallsetminus \mc{L}$ with $\dim \mc{L}' < \dim \mc{L}$. But applying Lemma \ref{lem:dimleaves} to any point in $\mc{L}'$ gives a contradiction on the minimality of $\mc{M}$. Thus, $f(\mc{L})$ is a closed (since $f$ is proper) irreducible subvariety of $X$ contained in $\mc{M}$. Since $\mc{M}$ is also closed and irreducible and $\dim f(\mc{L}) = \dim \mc{M}$, we conclude that $f(\mc{L}) = \mc{M}$. Since $\mc{L}$ and $\mc{M}$ are smooth, Lemma  \ref{lem:dimleaves} implies that $f |_{\mc{L}} : \mc{L} \rightarrow \mc{M}$ is a smooth morphism. Since it is generically an isomorphism, we conclude that it is everywhere an isomorphism. 

Now let $m_0 \in \mc{M}$ as before. Take $\mc{L}_1 \subset Y$ a leaf such that $\mc{L}_1 \cap f^{-1}(m_0)$ is open (and non-empty) in $f^{-1}(m_0)$. We wish to show that $\mc{L}_1 = \mc{L}$. This would contradict $\dim f^{-1}(m_0) > 0$, and hence show that $f^{-1}(\mc{M}) = \mc{L}$. Since we have assumed that $f$ is an isomorphism over a generic point of $\mc{M}$, it suffices to show that $\mc{M} \cap f(\mc{L}_1)$ is dense in $\mc{M}$. Assume otherwise. Then $Z := \overline{\mc{M} \cap f(\mc{L}_1)}$ is a proper closed subvariety of $\mc{M}$. The morphism $f |_{\mc{L}_1 \cap f^{-1}(\mc{M})}$ factors as $\mc{L}_1 \cap f^{-1}(\mc{M}) \rightarrow Z \hookrightarrow \mc{M}$. Choose $p_1 \in \mc{L}_1 \cap f^{-1}(\mc{M})$. Then the induced map $T^*_{f(p_1)} \mc{M} \rightarrow T^*_{p_1} \mc{L}_1$ would have a non-trivial kernel. However, as explained in the proof of Lemma \ref{lem:dimleaves}, this cannot happen since $f$ is Poisson.
\end{proof}

\section{Symplectic quotient singularities}

In this section we turn to the particular class of conic symplectic varieties that we are interested in, namely that of symplectic quotient singularities. We let $(V,\omega)$ be a finite dimensional symplectic vector space and $\Gamma \subset \mathrm{Sp}(V)$ a finite group. An element $s \in \Gamma$ is said to be a \textit{symplectic reflection} if $\mathrm{rk} (1- s) = 2$. We say that $\Gamma$ is a symplectic reflection group if it is generated by its set $\mc{S}$ of reflections. Though it is not strictly necessary, we will assume throughout that $\Gamma$ is a symplectic reflection group. We denote by $\Irr \Gamma$ the set of (isomorphism classes of) complex irreducible $\Gamma$-modules.

\subsection{Namikawa Weyl group} \label{nam_weyl}

Recall that Namikawa's Weyl group $W$ acts on $\mf{c}$. Here we describe how to make the group, and the action, precise in the case $X = V / \Gamma$. 

 First, abusing terminology, we will say that a subspace $H \subset V$ is a \textit{symplectic hyperplane} if $\dim H = \dim V - 2$ and the restriction of $\omega$ to $H$ is non-degenerate. For each $s \in \mc{S}$, the subspace $\Ker (1 - s)$ is a symplectic hyperplane. Let $\mc{A}$ denote all symplectic hyperplanes that arise in this way. Then $\mc{A}$ is a finite set, and $\Gamma$ acts on $\mc{A}$ in the natural way. For each $H \in \mc{A}$, the subgroup $\Gamma_H$ of $\Gamma$ that acts pointwise trivially on $H$ is a minimal parabolic of $\Gamma$ in the sense of \cite[\S 1.1]{BellamyNamikawa}. The non-identity elements of $\Gamma_H$ are precisely the reflections of $\Gamma$ with hyperplane $H$. If $\mc{B}$ is the set of conjugacy classes of minimal parabolics of $\Gamma$, as in \textit{loc. cit.}, then there is a natural bijection $\mc{A} / \Gamma \rightarrow \mc{B}$, $[H] \mapsto [\Gamma_H]$. As explained in \cite[\S1.1]{BellamyNamikawa} for any $H \in \mathcal{A}$, the minimal parabolic subgroup $\Gamma_H$ is isomorphic to a subgroup of $\mathrm{SL}(2,\C)$. Hence, via the McKay correspondence, there is an associated Weyl group $(W_H,\mf{c}_{H})$ of simply laced type. Here $\mf{c}_H$ is the reflection representation of $W_H$. The pair $(W_H,\mf{c}_H)$ is independent of the choice of embedding $\Gamma_H \hookrightarrow \mathrm{SL}(2,\C)$. 

There is a natural linear action $\zeta : \Xi_H \rightarrow \mathrm{GL}(\mf{c}_H)$ of the quotient $\Xi_H := N_{\Gamma}(\Gamma_H) / \Gamma_H$ on $\mf{c}_{H}$ by Dynkin diagram automorphisms; see \cite[\S2.1]{BellamyNamikawa}. Let 
$$
W_H^{\Xi_H} := \{ w\in W_H \ | \ \zeta(x) w \zeta(x)^{-1} = w, \ \forall \ x \in \Xi_H \}
$$
denote the centralizer of of $\Xi_H$ in $W_H$.  Now, Namikawa's Weyl group is 
\begin{equation}\label{eq:Namweyl}
W = \prod_{[H] \in \mc{A}/\Gamma} W_H^{\Xi_H} \;.
\end{equation}
By \cite[Theorem 1.3]{BellamyNamikawa}, there is an isomorphism of $W$-modules
$$
\mf{c} \simeq \bigoplus_{[H]\in \mathcal{A}/\Gamma} \mf{c}_{H}^{\Xi_H}.
$$
In particular, $W_H^{\Xi_H}$ acts as a reflection group on $\mf{c}_{H}^{\Xi_H}$. Let $R_H \subset \mf{c}_H^*$ denote the root system of $W_H$. Since the action of $\Xi_H$ on $W_H$ is by Dynkin automorphisms, one can identify the Weyl group $(W_H^{\Xi_H},\mf{c}_{H}^{\Xi_H})$ with the Weyl group of the folded Dynkin diagram. Since we will only consider cases where $\Xi_H = 1$, we do not elaborate on this further; see \cite{Slodowy} for details. 

Assume that $\zeta(\Xi_H) = \{ 1 \}$. Since we have identified $\mf{c} = \bigoplus_{H \in \mc{A} / \Gamma} \mf{c}_H$ as a direct sum of reflection representations, for each root $\alpha \in R_{H}$, we get a root hyperplane $L_{\alpha} \subset \mf{c}$.

\begin{lem}\label{lem:terminalsym2}
The root hyperplanes $L_{\alpha}$ are precisely the $T$-hyperplanes in $\mc{D}$. 
\end{lem}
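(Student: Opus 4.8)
The plan is to compute directly the locus of $\bk \in \mf{c}$ for which $\mf{X}_\bk$ has non-terminal singularities and to identify it with $\bigcup_\alpha L_\alpha$. The starting point is the codimension criterion already invoked in the remark above: since $\mf{X}_\bk$ is a symplectic variety, it has terminal singularities if and only if its singular locus has codimension at least $4$ (cf. \cite{NamikawaNote}). As symplectic leaves have even codimension, $\mf{X}_\bk$ is non-terminal if and only if its singular locus contains a symplectic leaf of codimension exactly $2$. So the whole problem reduces to detecting, for each $\bk$, whether $\mf{X}_\bk$ is singular along a codimension-$2$ stratum, which is a transverse (two-dimensional) question.

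Next I would analyze the transverse structure. For $\bk = 0$ we have $\mf{X}_0 = V/\Gamma$, whose codimension-$2$ symplectic leaves are indexed by $[H] \in \mc{A}/\Gamma$, the transverse slice along the $H$-stratum being the Kleinian singularity $\C^2/\Gamma_H$ of type $R_H$. For general $\bk$, the key structural input is that the restriction of $\eta : \mf{X} \to \mf{c}$ to a transverse slice along the deformed $H$-stratum is the semiuniversal deformation of $\C^2/\Gamma_H$, pulled back along the composite $\mf{c} \xrightarrow{\pi_H} \mf{c}_H \to \mf{c}_H/W_H$, where $\pi_H$ is the projection afforded by the decomposition $\mf{c} = \bigoplus_{[H]} \mf{c}_H$ of \cite[Theorem 1.3]{BellamyNamikawa} (using $\Xi_H = 1$) and $\mf{c}_H/W_H$ is the base of the semiuniversal deformation of the Du Val singularity $\C^2/\Gamma_H$. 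This identification is exactly the local-to-global content of Namikawa's construction of $\beta : \mc{X} \to \mf{c}/W$: the factor $W = \prod_{[H]} W_H$ and the summand $\mf{c}_H$ of $\mf{c}$ are built to match the transverse ADE data, and the projection $\mf{c} \to \mf{c}_H$ intertwines $W$ with $W_H$, hence descends to $\mf{c}/W \to \mf{c}_H/W_H$ as required to pull back $\mc{X}$.

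With this in hand, the third step is a standard fact about Du Val singularities: the fiber of the semiuniversal deformation of type $R_H$ over the image of $\pi_H(\bk)$ in $\mf{c}_H/W_H$ is smooth if and only if $\pi_H(\bk)$ is a regular element of $\mf{c}_H$, i.e. if and only if $\bk$ lies on no root hyperplane $L_\alpha$ with $\alpha \in R_H$; otherwise the fiber acquires an ADE singularity. Thus $\mf{X}_\bk$ is singular (hence non-terminal) along the $H$-stratum precisely when $\bk \in \bigcup_{\alpha \in R_H} L_\alpha$. Ranging over all $[H] \in \mc{A}/\Gamma$, the non-terminal locus of $\eta$ is exactly $\bigcup_\alpha L_\alpha$.

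Finally I would match up individual hyperplanes. Each $L_\alpha$ lies in the non-terminal locus, so $\mf{X}_\bk$ is non-terminal for all $\bk \in L_\alpha$; in particular $L_\alpha \subset \mc{D}$ is a Namikawa hyperplane and, by definition, a $T$-hyperplane. Conversely, a $T$-hyperplane $L$ satisfies $\mf{X}_\bk$ non-terminal for all $\bk \in L$, whence $L \subset \bigcup_\alpha L_\alpha$; a hyperplane contained in a finite union of hyperplanes must coincide with one of them, so $L = L_\alpha$ for some $\alpha$. The main obstacle is the second step: rigorously identifying the transverse family with the semiuniversal deformation of $\C^2/\Gamma_H$ and pinning its base down to the projection $\pi_H$ composed with $\mf{c}_H \to \mf{c}_H/W_H$. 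Everything else is either the codimension criterion for terminality or elementary facts about regular elements and a hyperplane inside a finite union of hyperplanes.
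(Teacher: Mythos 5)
Your overall strategy coincides with the paper's: reduce terminality to the existence of a codimension-two symplectic leaf via the criterion of \cite{NamikawaNote}, then analyse the transverse structure along the $H$-strata and reduce to the deformation theory of the Kleinian singularity $\C^2/\Gamma_H$. But the step you yourself flag as ``the main obstacle'' is a genuine gap, and it is precisely where the lemma has content. Your claim that the family $\eta : \mf{X} \to \mf{c}$, restricted to a transverse slice along the $H$-stratum, is the pullback of the semiuniversal deformation of $\C^2/\Gamma_H$ along $\mf{c} \xrightarrow{\pi_H} \mf{c}_H \to \mf{c}_H/W_H$ is not justified by ``the local-to-global content of Namikawa's construction.'' Namikawa's theory gives the universal Poisson deformation $\mc{X} \to \mf{c}/W$ and the abstract $W$-module isomorphism $\mf{c} \simeq \bigoplus_{[H]} \mf{c}_H^{\Xi_H}$ of \cite[Theorem 1.3]{BellamyNamikawa}; semiuniversality of the ADE deformation then produces \emph{some} classifying map on a transverse slice, but nothing in this formal package identifies that classifying map with the projection $\pi_H$ (rather than, say, a map twisted by a diagram automorphism, or one with different fibers altogether). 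Your sentence ``the factor $W_H$ and the summand $\mf{c}_H$ are built to match the transverse ADE data'' is the statement to be proven, not a proof of it. The paper fills exactly this hole by citing Losev's Theorem 1.3.2 of \cite{LosevSRAComplete} (see also \cite[Appendix A]{smoothsra}): the completion of $\H_{\bk}(\Gamma)$ at a point whose stabilizer is $\Gamma_H$ is a matrix algebra over the completion of $\H_{\bk'}(\Gamma_H)$, where $\bk'$ is the \emph{restriction} of $\bk$ to $\Gamma_H$, i.e.\ exactly $\pi_H(\bk)$; this uses the surjectivity of $\mf{c} \to \mf{c}_H$, which is where the hypothesis $\zeta(\Xi_H) = \{1\}$ enters (and is the one point the paper's proof flags). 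Losev's theorem simultaneously gives the product structure of the transverse slice and pins down the deformation parameter; combined with the classical identification of the Calogero-Moser space of the cyclic group $\Gamma_H \subset \mathrm{SL}(2,\C)$ with the fibers of the semiuniversal deformation of $\C^2/\Gamma_H$, it delivers your third step. Without this (or an equivalent) input, your argument assumes what it must prove.

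A secondary point: your transverse analysis is local (formal or analytic) near the strata of the central fiber $X = V/\Gamma$, whereas the lemma concerns codimension-two leaves anywhere in $\mf{X}_{\bk}$ for every $\bk$. One must rule out codimension-two singularities of $\mf{X}_{\bk}$ that are not detected near the central fiber; the paper's route avoids this because Losev's theorem applies at every point of every fiber, while a purely local-deformation argument would need an additional contraction argument using the conic $\Cs$-structure. Your codimension criterion and your final matching step (a hyperplane contained in a finite union of hyperplanes equals one of them, in both directions) are correct.
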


\begin{proof}
Recall that the $T$-hyperplanes are those hyperplanes where $\mf{X}_{\bk}$ does not have terminal singularities. In other words, $\mf{X}_{\bk}$ has at least one symplectic leaf of codimension two. Then the result follows from Losev's \cite[Theorem 1.3.2]{LosevSRAComplete} (see \cite[Appendix A]{smoothsra} for a different formulation), noting the fact we have assumed that $\zeta(\Xi_H) = \{ 1 \}$, which implies that the restriction map $\mf{c} \rightarrow \mf{c}_H$ is surjective. 
\end{proof}

\subsection{Symplectic reflection algebras}

The group $\Gamma$ acts on $\mc{S}$ by conjugation. Fix a $\Gamma$-invariant function $\bc : \mc{S} \rightarrow \C$. For each $s \in \mc{S}$ let $\omega_s$ be the skew-symmetric form on $V$, whose restriction to $\Ker (1 - s)$ is zero and equals $\omega$ on $\mathrm{Im} ( 1- s)$. Then the \textit{symplectic reflection algebra} $\H_{\bc}(\Gamma)$ at $t = 0$  associated to $\Gamma$ at $\bc$ is the quotient of $T V \rtimes \Gamma$, where $TV$ is the tensor algebra of $V$, by the relations 
\begin{equation}\label{eq:SRAfirst}
[v,w] = \sum_{s \in \mc{S}} \bc(s) \omega_s(v,w) s, \quad \forall \ v,w \in V. 
\end{equation}
In applications, it is convenient to give a different presentation of these relations. There is a permutation action of $\Gamma$ on the space of tuples $\bk = (\bk_{H,\eta})_{H \in \mc{A}, \eta \in \Irr \Gamma_H}$ of complex numbers via $\,^g \bk = (\bk_{g(H),\,^g\eta})_{H,\eta}$ for $g \in \Gamma$, where $\,^g \eta$ is the $\Gamma_{g(H)}$-module with the same underlying space as $\eta$ but with action $h \cdot v := g^{-1}hgv$ for $h \in \Gamma_{g(H)}$ and $v \in \eta$. We only consider tuples such that 
\begin{enumerate}
\item[(a)] $\bk$ is $\Gamma$-invariant, i.e. $\bk_{H,\eta} = \bk_{g(H),{}^g \eta}$ for all $g \in \Gamma$, $H \in \mc{A}$ and $\eta \in \Irr \Gamma_H$, 
\item[(b)] $\sum_{\eta \in \Irr \Gamma_H} \bk_{H,\eta} \delta_{H,\eta} = 0$ for all $H \in \mc{A}$, where $\delta_{H,\eta} := \dim \eta$. 
\end{enumerate}
Such tuples are indexed by pairs $(\lbrack H \rbrack, \eta)$ with $\lbrack H \rbrack \in \mathcal{A}/\Gamma$ and $\eta \in \Irr \Gamma_H$ for a fixed representative $H$ of $\lbrack H \rbrack$. For now, write $K$ for the (affine) space of all such $\bk$-tuples. 

Recall that $(W_H,\mf{c}_H)$ is the Weyl group associated to $\Gamma_H$ via the McKay correspondence. The space $\mf{c}_H$ has a basis given by a set $\Delta^{\vee}_H$ of simple coroots of $W_H$. Via the McKay correspondence, this set $\Delta^{\vee}_H$ is in bijection with $\Irr_* \Gamma_H := \Irr \Gamma_H \smallsetminus \{ \mathrm{triv} \}$, the set of non-trivial irreducible representations of $\Gamma_H$. Write $\eta \mapsto \alpha_{\eta}$ for the bijection $\Irr_* \Gamma_H \rightarrow \Delta^{\vee}_H$. Thus, we may define a map from the space $K$ to $\bigoplus_{[H] \in \mc{A} / \Gamma} \mf{c}_H$ by 
$$
\bk \mapsto \sum_{[H] \in \mc{A} / \Gamma} \sum_{\eta \in \Irr_* \Gamma_H} \bk_{\eta} \alpha_{\eta}. 
$$
The following was explained in \cite[\S 3.2]{BellamyNamikawa}. 

\begin{lem}
The above map defines an isomorphism $K \stackrel{\sim}{\longrightarrow} \mf{c} = \bigoplus_{[H] \in \mc{A} / \Gamma} \mf{c}_H^{\Xi_H}$ of affine spaces. 
\end{lem}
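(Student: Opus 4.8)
The plan is to exploit the fact that both the source $K$ and the target $\mf{c} = \bigoplus_{[H] \in \mc{A}/\Gamma} \mf{c}_H^{\Xi_H}$ decompose over the set $\mc{A}/\Gamma$, and that the displayed map is ``block diagonal'' with respect to this decomposition. First I would fix, once and for all, a representative $H$ of each class $[H] \in \mc{A}/\Gamma$, and note that condition (a) ($\Gamma$-invariance) forces $\bk$ to be determined by its values on these representatives: since $N_{\Gamma}(\Gamma_H)$ stabilises $H$ (the subgroup $\Gamma_H$ is recovered from $H$ as its pointwise stabiliser, and conjugation preserves this description), the map $\eta \mapsto {}^g\eta$ factors through $\Xi_H = N_{\Gamma}(\Gamma_H)/\Gamma_H$, and (a) becomes exactly the assertion that $\eta \mapsto \bk_{H,\eta}$ is a $\Xi_H$-invariant function on $\Irr \Gamma_H$. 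Thus $K \simeq \prod_{[H]} K_H$, where $K_H$ is the space of $\Xi_H$-invariant functions $\Irr \Gamma_H \to \C$ subject to the single linear relation (b), and it suffices to treat one $H$ at a time.

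For a fixed $H$ I would use condition (b), namely $\sum_{\eta \in \Irr \Gamma_H} \bk_{H,\eta} \dim \eta = 0$, to solve for the coefficient of the trivial representation, $\bk_{H,\mathrm{triv}} = -\sum_{\eta \in \Irr_* \Gamma_H} \bk_{H,\eta}\dim\eta$. This identifies $K_H$ with the space $(\C^{\Irr_* \Gamma_H})^{\Xi_H}$ of $\Xi_H$-invariant functions on $\Irr_* \Gamma_H = \Irr \Gamma_H \smallsetminus \{\mathrm{triv}\}$, the missing coordinate being recovered afterwards. Because the simple coroots $\Delta_H^{\vee} = \{\alpha_\eta : \eta \in \Irr_* \Gamma_H\}$ form a basis of $\mf{c}_H$, the assignment $(c_\eta)_\eta \mapsto \sum_{\eta \in \Irr_* \Gamma_H} c_\eta \alpha_\eta$ is a linear isomorphism $\C^{\Irr_* \Gamma_H} \stackrel{\sim}{\longrightarrow} \mf{c}_H$, and the map of the lemma is precisely its restriction along the inclusion $K_H \hookrightarrow \C^{\Irr_* \Gamma_H}$.

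To finish I would check $\Xi_H$-equivariance of this last isomorphism and pass to fixed points: by the construction of $\zeta$ via the McKay correspondence (\cite[\S2.1]{BellamyNamikawa}) one has $\zeta(x)\alpha_\eta = \alpha_{x\cdot\eta}$, so the isomorphism intertwines the permutation action of $\Xi_H$ on the left with the action by Dynkin diagram automorphisms on the right, and hence restricts to an isomorphism $(\C^{\Irr_* \Gamma_H})^{\Xi_H} \stackrel{\sim}{\longrightarrow} \mf{c}_H^{\Xi_H}$. In particular the image of any $\Xi_H$-invariant tuple lies in $\mf{c}_H^{\Xi_H}$, so the map is well defined; assembling over $[H] \in \mc{A}/\Gamma$ gives the claimed bijection, and since $K$ and $\mf{c}$ are cut out by homogeneous linear conditions and the map preserves the origin, it is linear and therefore an isomorphism of affine spaces. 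The only real content is the equivariance step, which is borrowed rather than proved here — the matching $\eta \mapsto \alpha_\eta$ of $\Xi_H$-sets between $\Irr_* \Gamma_H$ and the simple coroots is exactly what the McKay correspondence supplies — so I expect the sole obstacle to be bookkeeping: keeping straight that (a) encodes $\Xi_H$-invariance while (b) accounts precisely for the trivial-representation coordinate that $\mf{c}_H$, with basis indexed by $\Irr_* \Gamma_H$, does not see.
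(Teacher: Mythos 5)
Your proof is correct and is essentially the intended argument: the paper itself gives no proof of this lemma, delegating it to \cite[\S 3.2]{BellamyNamikawa}, and its only comment afterwards --- that condition (a) is equivalent to $\bk$ lying in the subspace $\mf{c}_H^{\Xi_H} \subset \mf{c}_H$ --- is exactly the reduction you make precise (block decomposition over $\mc{A}/\Gamma$, solving for the trivial coordinate via (b), and McKay equivariance of $\eta \mapsto \alpha_\eta$). One small imprecision is worth fixing: your parenthetical justification that $N_{\Gamma}(\Gamma_H)$ stabilises $H$ in fact proves the opposite inclusion, namely that $g(H) = H$ implies $g \in N_{\Gamma}(\Gamma_H)$ (since $g \Gamma_H g^{-1} = \Gamma_{g(H)}$). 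For the direction you actually need --- $g \in N_{\Gamma}(\Gamma_H)$ implies $g(H) = H$ --- one must recover $H$ from $\Gamma_H$, not the other way around: since the non-identity elements of $\Gamma_H$ are precisely the reflections with hyperplane $H$, one has $H = V^{\Gamma_H} = \Ker(1-s)$ for any $s \in \Gamma_H \smallsetminus \{1\}$, so $\Gamma_{g(H)} = g\Gamma_H g^{-1} = \Gamma_H$ forces $g(H) = H$. This equality $\Stab_{\Gamma}(H) = N_{\Gamma}(\Gamma_H)$ is precisely what makes condition (a), read at a fixed representative $H$, equal to full $\Xi_H$-invariance rather than invariance under some possibly smaller subgroup, so the direction matters; with that repaired, the rest of your argument (the identification $K_H \simeq (\C^{\Irr_* \Gamma_H})^{\Xi_H}$ and passage to fixed points of the equivariant isomorphism $\C^{\Irr_* \Gamma_H} \simeq \mf{c}_H$) goes through as written.
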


From now on, we identify $K = \mf{c}$. Notice that the restriction that $\bk$ lie in the closed subspace $\mf{c}_{H}^{\Xi_H} \subset \mf{c}_{H}$ is equivalent to the restriction (a) in the definition of $\bk$. Now, for $H \in \mc{A}$, let $\omega_H$ denote the skew-symmetric form on $V$ whose restriction to $H$ is trivial and equals $\omega$ on $H^{\perp}$. Then, for each $\bk \in \mf{c}$, the \textit{symplectic reflection algebra} $\H_{\bk}(\Gamma)$ at $t = 0$ is the quotient of $T V \rtimes \Gamma$ by the relations 
\begin{equation}\label{eq:SRAsecond}
[v,w] = \sum_{H \in \mc{A}} \omega_H(v,w) \sum_{\eta \in \Irr \Gamma_H} \frac{\bk_{H,\eta}}{\delta_{H,\eta}} e_{H,\eta}, \quad \forall \ v,w \in V, 
\end{equation}
where 
$$
e_{H,\eta} := \frac{\delta_{H,\eta}}{|\Gamma_H |} \sum_{g \in \Gamma_H} \chi_{\eta}(g^{-1}) g
$$
is the central idempotent of $\C \Gamma_H$ corresponding to $\eta$ with $\chi_{\eta}$ being the character of $\eta$. The two presentations are related by 
\begin{equation}\label{eq:cvsk}
\bc(s) = \frac{1}{|\Gamma_H|} \sum_{\eta \in \Irr \Gamma_H} \bk_{H,\eta} \chi_{\eta}(s^{-1}),
\end{equation}
for $s \in \Gamma_H \smallsetminus \{ 1 \}$. 

Let $Z(\H_{\bk}(\Gamma))$ denote the centre of $\H_{\bk}(\Gamma)$. One can also consider the generic symplectic reflection algebra $\H_{\mf{c}}(\Gamma)$ defined over $\C[\mf{c}]$. Its centre $ \mathsf{Z}_{\mf{c}}(\Gamma)$ is flat over $\C[\mf{c}]$. If, on the other hand, we choose $Y$ a $\Q$-factorial terminalization of $V / \Gamma$ then, as in section \ref{sec:sympsing}, we get a flat family $\mf{X}$ over $\mf{c}$ deforming $V / \Gamma$. Then \cite[Theorem 1.4]{BellamyNamikawa} says:

\begin{thm}
There is an isomorphism of Poisson varieties 
$$
\begin{tikzpicture}
\draw [->] (0.2,0) -- (3,0);
\draw [->] (0.2,-0.2) -- (1.8,-1.3);
\draw [->] (3.8,-0.3) -- (2.2,-1.3);
\node at (0,0) {$\mf{X}$};
\node at (2,0.2) {$\sim$};
\node at (4,0) {$\Spec   \mathsf{Z}_{\mf{c}}(\Gamma)$};
\node at (2,-1.5) {$\mf{c}$};
\end{tikzpicture}
$$
over $\mf{c}$. 
\end{thm}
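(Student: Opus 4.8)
The statement asserts that the two families over $\mf c$ deforming $X := V/\Gamma$ — the geometric one $\mf X = \mc X\times_{\mf c/W}\mf c$ and the algebraic one $\Spec\mathsf Z_{\mf c}(\Gamma)$ — coincide as Poisson schemes over $\mf c$. My plan is to recognise $\Spec\mathsf Z_{\mf c}(\Gamma)$ as a graded Poisson deformation of $X$, to invoke the universal property of Namikawa's deformation $\beta:\mc X\to\mf c/W$ to obtain a classifying morphism $\phi:\mf c\to\mf c/W$, and then to prove that $\phi$ equals the quotient map $\pi:\mf c\to\mf c/W$. Since $\mf X=\pi^*\mc X$ by definition, the identity $\phi=\pi$ is exactly what produces the desired Poisson isomorphism over $\mf c$.

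First I would equip $\Spec\mathsf Z_{\mf c}(\Gamma)$ with its structure as a deformation. Following Etingof--Ginzburg, the algebra $\H_{\mf c}(\Gamma)$ at $t=0$ is the $t=0$ fibre of a flat $\C[t]$-family whose generic fibre is an order in a central simple algebra; reducing the commutator modulo $t$ endows the centre $\mathsf Z_{\mf c}(\Gamma)$ with a Poisson bracket, homogeneous of the weight of $\omega$. Flatness over $\C[\mf c]$ is given, and the fibre over $0\in\mf c$ is $Z(\C[V]\rtimes\Gamma)=\C[V]^\Gamma=\C[X]$. Assigning $V$ weight one and each parameter $\bk$ weight two makes the relations \eqref{eq:SRAsecond} homogeneous, so that $\Spec\mathsf Z_{\mf c}(\Gamma)\to\mf c$ is a conic graded Poisson deformation of $X$ carrying the same $\Cs$-weight on $\mf c$ as $\mf X\to\mf c$, after the identification $K=\mf c$ of the preceding lemma.

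Since $\beta$ is the \emph{universal} graded Poisson deformation of $X$, this family is the pullback $\phi^*\mc X$ along a unique graded, origin-preserving $\phi:\mf c\to\mf c/W$, the pullback isomorphism being one of Poisson schemes over $\mf c$; it thus remains to prove $\phi=\pi$. I would do this in two steps. (i) \emph{$\phi$ factors through $\pi$.} The action of Namikawa's Weyl group on the parameter space $\mf c$ described in \S\ref{nam_weyl} carries $\mathsf Z_{\bk}(\Gamma)$ to $\mathsf Z_{w\bk}(\Gamma)$ by a graded Poisson isomorphism, so $\phi$ is constant on $W$-orbits and factors as $\phi=\bar\phi\circ\pi$ for a graded $\bar\phi:\mf c/W\to\mf c/W$ fixing $0$. (ii) \emph{$\bar\phi=\mathrm{id}$.} Here I would argue by a local analysis along the codimension-two symplectic leaves: over the transverse slice to the leaf indexed by $[H]\in\mc A/\Gamma$, both families restrict to graded Poisson deformations of the Kleinian singularity $\C^2/\Gamma_H$, and the centre of the associated rank-one symplectic reflection algebra is classically the universal graded Poisson deformation of $\C^2/\Gamma_H$, with base precisely the summand $\mf c_H^{\Xi_H}\subset\mf c$. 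Assembling these over $[H]\in\mc A/\Gamma$ and using the decomposition $\mf c=\bigoplus_{[H]}\mf c_H^{\Xi_H}$ shows that $\bar\phi$ is an isomorphism; conic homogeneity together with the matching of $\Cs$-weights then forces $\bar\phi=\mathrm{id}$, i.e. $\phi=\pi$.

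The main obstacle is step (ii): pinning $\phi$ down as exactly $\pi$, rather than merely as a dominant or finite graded self-map, is the real content, and it rests on the input that the transverse symplectic reflection algebra at each $H$ reproduces the \emph{universal} deformation of the corresponding Kleinian singularity (so that the classifying map is of full rank in the $\mf c_H^{\Xi_H}$-directions). Everything else — the Poisson bracket, flatness, the conic structure, and the existence and uniqueness of the classifying map $\phi$ — is formal once Namikawa's universal property is available.
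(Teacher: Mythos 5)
You should first be aware that the paper contains no proof of this statement: it is imported verbatim from the prequel, via the citation \cite[Theorem 1.4]{BellamyNamikawa}, so your proposal can only be compared with the strategy of that cited proof. Your overall architecture --- view $\Spec\mathsf{Z}_{\mf{c}}(\Gamma)$ as a graded Poisson deformation of $V/\Gamma$, invoke the universal property of Namikawa's deformation $\beta\colon\mc{X}\to\mf{c}/W$ to obtain a classifying map $\phi\colon\mf{c}\to\mf{c}/W$, and pin $\phi$ down by reduction to Kleinian singularities transverse to the codimension-two leaves --- is the right shape, and is in outline how the cited result is established (Namikawa's universality combined with Losev-type results on symplectic reflection algebras; compare the use of \cite{LosevSRAComplete} in Lemma \ref{lem:terminalsym2}). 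However, your step (i) is a genuine, and fatal, gap: it is circular. You assert that the action of Namikawa's Weyl group $W=\prod_{[H]}\s_{|\Gamma_H|}$ (Lemma \ref{nam_weyl_sym}) on parameters lifts to graded Poisson isomorphisms $\mathsf{Z}_{\bk}(\Gamma)\simeq\mathsf{Z}_{w\bk}(\Gamma)$. No such isomorphisms are available a priori: the isomorphisms one can actually write down --- twists by linear characters of $\Gamma$ (which shift the indices of the $\kappa_{H,i}$ cyclically, and only by those shifts that extend to characters of all of $\Gamma$) and the duality exchanging $\h$ and $\h^*$ (an index reversal) --- generate in general a proper subgroup of $W$. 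The $W$-equivariance of the Calogero--Moser family is a \emph{consequence} of the theorem being proved (once $\mf{X}\simeq\pi^*\mc{X}$, the fibres over $\bk$ and $w\bk$ agree because both points map to the same point of $\mf{c}/W$); this is exactly why the introduction lists the $W$-stability of $\mc{E}$ as a nontrivial consequence (2) of Theorem \ref{thm:mainthm}, contrasting it with the failure of this property for Rouquier's essential hyperplanes. You cannot assume it as an input in order to prove $\phi=\pi$.

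Even with step (i) deleted, so that one compares $\phi$ with $\pi$ directly (as a correct proof must), your step (ii) does not close the argument as written. The transverse slice to a codimension-two leaf carries no contracting $\Cs$-action, so ``both families restrict to graded Poisson deformations of $\C^2/\Gamma_H$'' has to be replaced by a statement about formal (or analytic) completions at points of the leaf; supplying this for the Calogero--Moser side is precisely the content of Losev's completion theorem, a substantive input rather than a formality. More importantly, your final inference that ``conic homogeneity together with the matching of $\Cs$-weights forces $\bar\phi=\mathrm{id}$'' is false: $\mf{c}/W$ is a weighted affine space and admits many graded automorphisms fixing the origin (already for $W=\s_2$ acting on $\C$, every scaling of the invariant coordinate is one). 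What actually has to be proved is that the classifying map, read on each summand $\mf{c}_H^{\Xi_H}$ of $\mf{c}$, is the identity under the specific identification $K\simeq\mf{c}$, $\bk\mapsto\sum_{\eta}\bk_{\eta}\alpha_{\eta}$, of the lemma preceding the theorem; that normalization --- matching Calogero--Moser parameters with simple coroots via the McKay correspondence so that the Kleinian slice deformations correspond exactly, and not merely up to a graded automorphism of the local universal deformation --- is the real content of the cited proof, and it is the part your sketch leaves unproved.
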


When we wish to show that $\mf{X}$ is the universal deformation associated to $\Gamma$, we write $\mf{X}(\Gamma)$. 

\subsection{Example: wreath products}

In this section we consider the key example where $\Gamma$ equals the wreath product $G \wr \s_n := G^n \rtimes \s_n$, for some \textit{non-trivial} finite subgroup $G$ of $\mathrm{SL}(2,\C)$. Here $\s_n$ is the symmetric group on $n$ letters and $\Gamma$ acts on $V = (\C^2)^n$ in the obvious way. If $s_{i,j}$ is the transposition swapping $i$ and $j$ and for $\gamma \in G$, $\gamma_i := (1, \ds, \gamma, \ds, 1) \in G^n$, then define $\gamma_{i,j} := s_{i,j} \gamma_i^{-1} \gamma_j$. The symplectic reflections in $\Gamma$ are $\gamma_{i,j}$ for $\gamma \in G$ and $i \neq j$ and $\gamma_i$ for $\gamma \in G \smallsetminus \{ 1 \}$. The $\gamma_{i,j}$ form a single conjugacy class and $\gamma_i$ is conjugate to $\rho_j$ if and only if $\gamma$ is conjugate to $\rho$ in $G$. This means that the set $\mc{A} / \Gamma = \{ [H_1], [H_2] \}$ has two elements, where $H_1 = \Ker (1 - s_{1,2})$ and $H_2 = \Ker (1 - \gamma_1)$ for any $\gamma \in G \smallsetminus \{ 1 \}$. In particular, $\Gamma_{H_1} = \s_2 = \{ 1, s_{1,2} \}$ and $\Gamma_{H_2} = \{ \gamma_1 \ | \ \gamma \in G \} \simeq G$. Thus, the function $\bc$ can be encoded as $(c_1,\underline{c})$, where $c_1 = \bc(s_{1,2})$ and $\underline{c}$ is a class function on $G$, vanishing on the identity, such that $\bc(\gamma_i) = \underline{c}(\gamma)$. Then the defining relations become
$$
[v,w] = c_1 \sum_{i,j} \omega_{\gamma_{i,j}}(v,w) \gamma_{i,j} + \sum_{i = 1}^n \sum_{\gamma \in G \smallsetminus \{ 1 \} } \underline{c}(\gamma) \omega_{\gamma_i}(v,w) \gamma_i. 
$$ 
Let $\bk_i := \bk_{H_i}$. Since $\bk_{i,\mathrm{triv}} = - \sum_{\eta \in \Irr_* \Gamma_{H_i}} \delta_{H_i,\eta} \bk_{i,\eta}$, we omit it from the notation so that $\bk \in \mf{c}_{H_1} \oplus \mf{c}_{H_2}$. Here $\mf{c}_{H_1} = \{ \bk_{1,\mathrm{sgn}}  \in \C \} \simeq \C$ is the reflection representation for $W_{H_1} = \s_2$, and $\mf{c}_{H_2}$ is the reflection representation for $W_{H_2}$, which is the Weyl group associated to $G$ via the McKay correspondence. By equation (\ref{eq:cvsk}), we deduce that 
$$
2 c_1 = \bk_{1,\mathrm{triv}} - \bk_{1,\mathrm{sgn}}, \quad |G| \underline{c}(\gamma) = \sum_{\eta\in \Irr G} \bk_{2,\eta}  \chi_{\eta}(\gamma^{-1}). 
$$
Recall that $R_{H_1} \subset \mf{c}_{H_1}^*$ and $R_{H_2} \subset \mf{c}_{H_2}^*$ are the corresponding root systems. Let $\langle - , - \rangle : \mf{c}_H^* \times \mf{c}_H \rightarrow \C$ denote the natural pairing. The following theorem is due originally to Martino \cite{Mo} and Gordon \cite{GordonQuiver}. 

\begin{thm}\label{thm:hyperplaneswreath}
The Calogero-Moser space $\mf{X}_{\bk}$ is smooth if and only if 
$$
i |G| \langle \alpha , \bk_{1,\mathrm{sgn}} \rangle + 2 j \langle \beta , \bk_2 \rangle \neq 0, 
$$
for all $\alpha \in R_{H_1}, \beta \in R_{H_2}, \ i \in \{ 0, \pm 1 \}$ and $ j \in \{ -(n-1), \ds, n-1 \}$. 
\end{thm}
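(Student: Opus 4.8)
The plan is to translate smoothness into the condition $\bk\notin\mc{D}$ through the bridges already established, to peel off the $T$-hyperplanes with the tools in hand, and then to reduce the theorem to a computation of the $F$-hyperplanes, which is where the real work lies. \emph{Reduction.} By Corollary~\ref{cor:CM1}, $\mf{X}_{\bk}$ is smooth if and only if the Calogero--Moser families $\Omega_{\bk}(\Gamma)$ are trivial; by the definition of $\mc{E}$ this says exactly $\bk\notin\mc{E}$, and by Theorem~\ref{thm:mainthm} we have $\mc{E}=\mc{D}$. So it suffices to prove that $\mc{D}$ is the union of the hyperplanes $i|G|\langle\alpha,\bk_{1,\mathrm{sgn}}\rangle+2j\langle\beta,\bk_2\rangle=0$ appearing in the statement. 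I would split $\mc{D}=\mc{D}_T\sqcup\mc{D}_F$ into $T$- and $F$-hyperplanes and treat the two parts in turn.

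\emph{The $T$-hyperplanes.} For wreath products one checks that $\zeta(\Xi_{H_i})=1$ for both classes $[H_1],[H_2]\in\mc{A}/\Gamma$ (the conjugation action of $N_\Gamma(\Gamma_{H_i})$ on $\Gamma_{H_i}$ is trivial), so Lemma~\ref{lem:terminalsym2} applies and identifies $\mc{D}_T$ with the union of root hyperplanes $L_\gamma$, $\gamma\in R_{H_1}\cup R_{H_2}$. Since $R_{H_1}$ is of type $\mathsf{A}_1$, this produces the single hyperplane $\langle\alpha,\bk_{1,\mathrm{sgn}}\rangle=0$ (the case $i=\pm1$, $j=0$) together with the hyperplanes $\langle\beta,\bk_2\rangle=0$ for $\beta\in R_{H_2}$ (the case $i=0$, $j\neq0$, every such $j$ giving the same hyperplane). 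Thus all listed forms with $ij=0$ are accounted for, and the theorem reduces to identifying $\mc{D}_F$ with the mixed forms, those having $i=\pm1$ and $1\le|j|\le n-1$.

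\emph{The $F$-hyperplanes: the main obstacle.} The machinery above pins down $\mc{D}_T$ but gives no direct access to $\mc{D}_F$, so genuinely new input is needed. Parabolic induction (Theorem~\ref{thm:CMfamilytriv}) is a natural first move but is not sufficient: the minimal parabolics $\Gamma_{H_1}=\s_2$ and $\Gamma_{H_2}=G$ only reproduce the $T$-hyperplanes, the rank-two parabolic $G\wr\s_2$ contributes the mixed hyperplanes with $|j|=1$, and a parabolic $G\wr\s_m$ contributes those with $|j|\le m-1$; but since Theorem~\ref{thm:CMfamilytriv} gives only a sufficient criterion for non-triviality, induction can neither reach the extreme hyperplanes with $|j|=n-1$ nor rule out spurious ones. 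The decisive step is therefore a direct computation for $\Gamma$ itself. The route I would take is to identify $\mf{X}(G\wr\s_n)$, as a graded Poisson deformation, with the Nakajima quiver variety attached to the affine Dynkin quiver of $G$ with dimension vector $n\delta$ ($\delta$ the null root), under which $\mc{D}$ becomes the set of deformation parameters pairing to zero with some positive root $\theta$ of the affine root system satisfying $0<\theta\le n\delta$. Enumerating these roots — the real roots $\beta+m\delta$ with $\beta\in R_{H_2}$, the box condition $\theta\le n\delta$ forcing $|m|\le n-1$, while the imaginary roots $k\delta$ only reproduce the already-found hyperplane $\langle\alpha,\bk_{1,\mathrm{sgn}}\rangle=0$ — yields precisely the stated family with $j=m$ ranging over $\{-(n-1),\dots,n-1\}$ and $i\in\{0,\pm1\}$. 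Equivalently, one may bypass geometry and compute the partition $\Omega_{\bk}(G\wr\s_n)$ of $\Irr(G\wr\s_n)$ directly from the block theory of the restricted rational Cherednik algebra through the combinatorics of multipartitions, as Martino and Gordon do; the walls across which two families merge are again the stated hyperplanes. In either approach, fixing the precise dictionary between $(i,j)$ and the data $(\alpha,\beta,\theta)$ — in particular the normalizing factors $|G|$ and $2$ — is a routine translation through the change of variables (\ref{eq:cvsk}) together with the splitting $\mf{c}=\mf{c}_{H_1}\oplus\mf{c}_{H_2}$. I expect the root enumeration in the box $0<\theta\le n\delta$, proving simultaneously that each such root yields a genuine wall and that none is missed, to carry essentially all of the content.
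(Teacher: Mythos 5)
Your decisive step --- realizing $\mf{X}(G \wr \s_n)$ as a quiver variety attached to the affine Dynkin quiver of $G$ with dimension vector $n\delta$, and enumerating the roots $0 < \theta \le n\delta$ to produce the walls --- is exactly the paper's proof: the paper translates $\bk$ into the parameter $\lambda$ of \cite[\S 6.7]{Mo} via (\ref{eq:cvsk}) and then repeats the argument of \cite[Lemma 4.4]{GordonQuiver}, which is precisely this root-theoretic smoothness criterion. So the mathematical core of your proposal coincides with the paper's.

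However, the reduction you wrap around it has a genuine gap. Theorem \ref{thm:hyperplaneswreath} is stated for an \emph{arbitrary} non-trivial finite subgroup $G \subset \mathrm{SL}(2,\C)$, whereas Corollary \ref{cor:CM1}, Theorem \ref{thm:mainthm}, and indeed the very definitions of $\mc{E}$ and of Calogero--Moser families (via the restricted rational Cherednik algebra) require $\Gamma$ to be a complex reflection group, i.e.\ to preserve a Lagrangian splitting $V = \h \oplus \h^*$ carrying the $T$-action (\ref{eq:torusaction}). For $\Gamma = G \wr \s_n$ such a splitting exists if and only if $G$ is cyclic; for $G$ binary dihedral, tetrahedral, octahedral or icosahedral the two-dimensional representation of $G$ is irreducible, so there is no $\Gamma$-stable Lagrangian, no $T$-action, no $\mc{E}$, and no families in the paper's sense. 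Hence your chain ``smooth $\Leftrightarrow$ families trivial $\Leftrightarrow$ $\bk \notin \mc{E}$ $\Leftrightarrow$ $\bk \notin \mc{D}$'' is unavailable for most of the groups the theorem covers. (The problem is not circularity --- the proofs of Theorem \ref{thm:mainthm} and Corollary \ref{cor:CM1} are indeed independent of Theorem \ref{thm:hyperplaneswreath} --- but applicability.) Even for cyclic $G$ there is a secondary gap: ``families trivial $\Leftrightarrow$ $\bk \notin \mc{E}$'' silently assumes $N = |\Irr \Gamma|$, i.e.\ generic smoothness of $\mf{X}_{\bk}$, which at this stage is only known through the quiver description itself. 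The fix is simply to discard the scaffolding: the criterion used in \cite[Lemma 4.4]{GordonQuiver} characterizes smoothness of $\mf{X}_{\bk}$ directly, with no reference to $\mc{D}$, $\mc{E}$, or the $T$/$F$ dichotomy, and the $m = 0$ real roots together with the imaginary roots then account for the hyperplanes with $ij = 0$ that you obtained separately from Lemma \ref{lem:terminalsym2}, so that step is also unnecessary.
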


\begin{proof}
Let $c= \sum_{\gamma \neq 1} \underline{c}(\gamma)$. Using the fact that $\sum_{\eta \in \Irr \Gamma_H} \bk_{H,\eta} \delta_{H,\eta} = 0$, one can calculate that $\Tr_{\eta} c = \bk_{2,\eta}$. Then the parameter $\lambda$ defined in \S 6.7 of \cite{Mo} is given by  
$$
\lambda_{\eta} = \bk_{\eta}, \quad \lambda_{\mathrm{triv}} = \bk_{\mathrm{triv}} - \frac{|G|}{4} (\bk_{1,\mathrm{triv}} - \bk_{1,\mathrm{sgn}})
$$
which implies that 
$$
\lambda_{\infty} = \frac{ n |G|}{4}  (\bk_{1,\mathrm{triv}} - \bk_{1,\mathrm{sgn}}).
$$
Then the claim follows by repeating the argument given in the proof of \cite[Lemma 4.4]{GordonQuiver}.  
\end{proof}

Theorem \ref{thm:hyperplaneswreath} implies that the hyperplanes $\bk_{1,\mathrm{sgn}} = 0$ and  $\langle \beta , \bk_2 \rangle = 0$, for $\beta \in R_{H_2}$, are the $T$-hyperplanes for $\Gamma$, where $\mf{X}_{\bk}$ has a symplectic leaf of codimension two, and the hyperplanes
$$
i |G| \langle \alpha , \bk_{1,\mathrm{sgn}} \rangle + 2 j \langle \beta , \bk_2 \rangle = 0,
$$
for all $\alpha \in R_{H_1}, \beta \in R_{H_2}, \ i \in \{\pm 1 \}$ and $ j \in \{ -(n-1), \ds, n-1 \} \smallsetminus \{ 0 \}$, are the $F$-hyperplanes, where $\mf{X}_{\bk}$ fails (at least generically) to be $\Q$-factorial.

\section{Rational Cherednik algebras}

Let $\h$ be a finite-dimensional complex vector space and $\Gamma \subset \mathrm{GL}(\h)$ be a finite reflection group. Then $V = \h \oplus \h^*$ is a symplectic vector space and $\Gamma$ acts on $V$ as a symplectic reflection group. Each symplectic hyperplane in $\mc{A}$ is of the form $L \oplus L'$ for $L \subset \h$ a reflection hyperplane, and $L' \subset \h^*$. Thus, we can identify $\mc{A}$ with the set of reflecting hyperplanes in $\h$. If $H = L \oplus L'$, then choose $\alpha_H^{\vee} \in L$ and $\alpha_H \in L'$ such that $\langle \alpha_H, \alpha_H^{\vee} \rangle := \alpha_H(\alpha_H^{\vee}) \neq 0$. Define, for each symplectic hyperplane $H$ of $\Gamma$, the form $(\cdot,\cdot)_H: \h \times \h^* \to \C$ by 
$$
(y,x)_H = \frac{\langle x , \alpha_H^{\vee} \rangle \langle \alpha_H,y \rangle}{\langle \alpha_H,\alpha_H^{\vee}  \rangle} \;. 
$$
Then the defining relations for the rational Cherednik algebra are
\[
[y,y'] = 0 \;, \quad [x,x'] = 0 \;,
\]
\[
[y,x] = \sum_{H \in \mc{A}} (y,x)_H \sum_{i = 0}^{|\Gamma_H| - 1} \bk_{H,i} e_{H,i}, \quad \forall y,y' \in \h, \ x,x' \in \h^* \;.
\]
Here  
$$
e_{H,i} := \frac{1}{|\Gamma_H|} \sum_{s \in \Gamma_H} \det(s |_{\mf{h}} )^{-i} s. 
$$
In order to better describe the action of Namikawa's Weyl group, we introduce new variables $\kappa_{H,i}$, where $[H] \in \mc{A} / \Gamma$ and $i = 0, \ds, |\Gamma_H | - 1$. Set
$$
 \kappa_{H,i +1} - \kappa_{H,i} := \frac{1}{|\Gamma_H|} \bk_{H,|\Gamma_H| - i}, \quad \sum_{i = 0}^{|\Gamma_H| -1}  \kappa_{H,i } = 0,
$$
so that the defining relations for the rational Cherednik algebra become
\begin{align*}
[y,x] & = \sum_{H \in \mc{A}} (y,x)_H \sum_{s \in \Gamma_H \smallsetminus \{ 1 \} } \left( \sum_{j = 0}^{|\Gamma_H|-1} \det(s)^j (\kappa_{H,j+1} - \kappa_{H,j}) \right) s\\
 & = \sum_{H \in \mc{A}} |\Gamma_H| (y,x)_H \sum_{j = 0}^{|\Gamma_H|-1} (\kappa_{H,j+1} - \kappa_{H,j}) e_{H,-j}, \quad \forall \ x \in \h^*, y \in \h. 
\end{align*}
If $\s_H := \s_{|\Gamma_H|}$ the symmetric group on $| \Gamma_H |$ letters, then define an action of $\s_H$ on $\mf{c}$ by $\sigma(\kappa_{H',i}) = \kappa_{H',i}$ if $[H] \neq [H']$ in $\mc{A} / \Gamma$ and 
$$
\sigma(\kappa_{H,i}) = \kappa_{H,\sigma(i)}.  
$$

\begin{lem} \label{nam_weyl_sym}
The Namikawa Weyl group of $\Gamma$ is 
$$
\prod_{[H] \in \mc{A}/\Gamma} \s_{H}.
$$
\end{lem}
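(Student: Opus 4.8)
The plan is to specialise the general product decomposition (\ref{eq:Namweyl}) of Namikawa's Weyl group, $W = \prod_{[H] \in \mc{A}/\Gamma} W_H^{\Xi_H}$, to the rational Cherednik situation. Thus it suffices to prove, for each class $[H] \in \mc{A}/\Gamma$, that $W_H^{\Xi_H} = \s_{|\Gamma_H|} = \s_H$. First I would pin down $W_H$ itself. Since $\Gamma \subset \mathrm{GL}(\h)$ is a complex reflection group and $H = L \oplus L'$ with $L \subset \h$ a reflecting hyperplane, the minimal parabolic $\Gamma_H$ is the pointwise stabiliser of $L$; any such element fixes $L$ and acts on the line $\h/L$ by a root of unity, so $\Gamma_H$ injects into $\mathrm{GL}(\h/L) \cong \C^{\times}$ and is cyclic, of order $d := |\Gamma_H|$. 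Under the embedding $\Gamma_H \hookrightarrow \mathrm{Sp}(H^{\perp}) \cong \mathrm{SL}(2,\C)$ given by its action on the symplectic plane $H^{\perp} = \mathrm{Im}(1-s)$, the group $\Gamma_H$ is the cyclic subgroup of order $d$, whose McKay graph is the affine diagram $\widetilde{A}_{d-1}$; deleting the node of the trivial representation leaves $A_{d-1}$, so $W_H = \s_d$ and $\mf{c}_H$ is its standard $(d-1)$-dimensional reflection representation.

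The heart of the argument is to show that the diagram-automorphism action $\zeta(\Xi_H)$ is trivial, so that the defining condition for $W_H^{\Xi_H}$ becomes vacuous and $W_H^{\Xi_H} = W_H$. The action $\zeta$ is induced by the conjugation action of $N_{\Gamma}(\Gamma_H)$ on $\Gamma_H$, which permutes $\Irr \Gamma_H$ and hence the simple roots of $A_{d-1}$; I would prove this conjugation action is trivial, i.e.\ that $N_{\Gamma}(\Gamma_H) = C_{\Gamma}(\Gamma_H)$. Fix a generator $s$ of $\Gamma_H$. Any $g \in N_{\Gamma}(\Gamma_H)$ fixes $H = \Ker(1-s)$ setwise and, being symplectic, preserves $H^{\perp}$. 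Crucially, because $g$ lies in $\Gamma \subset \mathrm{GL}(\h)$ it acts block-diagonally on $V = \h \oplus \h^*$, preserving each summand, and therefore preserves the two lines $H^{\perp} \cap \h$ and $H^{\perp} \cap \h^*$. In the adapted basis of these two lines $s$ acts as $\mathrm{diag}(\epsilon, \epsilon^{-1})$ for a primitive $d$-th root of unity $\epsilon$, while $g$ acts diagonally; hence $g$ and $s$ commute. As $s$ generates $\Gamma_H$, this gives $N_{\Gamma}(\Gamma_H) = C_{\Gamma}(\Gamma_H)$, so $\Xi_H$ acts trivially on $\Gamma_H$ and on $\Irr \Gamma_H$, whence $\zeta(\Xi_H) = \{1\}$.

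Combining the two steps, $W_H^{\Xi_H} = W_H = \s_{|\Gamma_H|} = \s_H$ for every $[H]$, and substituting into (\ref{eq:Namweyl}) yields $W = \prod_{[H] \in \mc{A}/\Gamma} \s_H$, as claimed. I expect the main obstacle to be the second step: one must check that the action $\zeta$ of \cite[\S2.1]{BellamyNamikawa} really is the one induced by conjugation on $\Gamma_H$ (rather than some twisted variant), and that the splitting $V = \h \oplus \h^*$ genuinely forces each normalising element to stabilise the two canonical eigenlines in $H^{\perp}$. The identification of $W_H$ via the McKay correspondence and the final assembly are then routine.
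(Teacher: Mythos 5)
Your first step (identifying $W_H = \s_{|\Gamma_H|}$ with reflection representation $\mf{c}_H$ for cyclic $\Gamma_H$) is correct, and your direct proof that $N_{\Gamma}(\Gamma_H) = C_{\Gamma}(\Gamma_H)$ is also correct (the paper simply cites \cite[\S A]{BMR} for this). But the gap is exactly where you suspected it: in this setting $\zeta$ is \emph{not defined} as the conjugation-induced permutation of $\Irr \Gamma_H$. Following \cite[\S 2.2]{BellamyNamikawa}, it is defined geometrically: $\Xi_H$ acts on $V_H/\Gamma_H$ (where $V = V_H \oplus V^{\Gamma_H}$), this action lifts uniquely to the minimal resolution $Y' \rightarrow V_H/\Gamma_H$, and $\zeta$ is the induced action on $H^2(Y';\C) \simeq \mf{c}_H$. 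Triviality of the conjugation action on $\Gamma_H$ therefore does not, by itself, give $\zeta(\Xi_H) = \{1\}$: an element centralizing $\Gamma_H$ still acts non-trivially on $V_H$, hence on $Y'$, and could a priori act non-trivially on $H^2(Y';\C)$. To complete your route you would need the equivariance of the McKay correspondence under normalizing elements (that $g$ sends the exceptional curve indexed by $\eta$ to the one indexed by ${}^g\eta$), which is a genuine additional argument, not a routine check; and it is precisely the step your plan leaves unproved.

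The paper closes this gap by a different mechanism, and your own computation already contains the needed ingredient. You showed that any $g \in N_{\Gamma}(\Gamma_H)$ acts on $H^{\perp} = \h_H \oplus \h_H^*$ as $\mathrm{diag}(a, a^{-1})$ in the adapted basis. Instead of passing from this to the conjugation action, observe (as the paper does) that such an action is the restriction of the action of the \emph{connected} one-dimensional torus $T' \simeq \Cs$ acting as $\mathrm{diag}(t,t^{-1})$ on $V_H$. This torus commutes with $\Gamma_H$, so its action descends to $V_H/\Gamma_H$ and lifts to Hamiltonian automorphisms of $Y'$; since $T'$ is connected, it acts trivially on $H^2(Y';\C)$, and hence so does the finite image of $C_{\Gamma}(\Gamma_H)$ sitting inside it. This kills $\zeta(\Xi_H)$ directly from the geometric definition, with no appeal to any identification of $\zeta$ with the conjugation action. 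So your argument can be repaired with material you already have: keep the diagonal-action computation, but use it to embed the $\Xi_H$-action into a connected group rather than to compare it with conjugation on $\Irr \Gamma_H$.
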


\begin{proof}
The group $\Gamma$ is a complex reflection group. Therefore the group $\Gamma_H$, for $H \in \mc{A}$, is a cyclic group and the corresponding Weyl group is $\s_H$. Hence, in the notation of section \ref{nam_weyl}, it suffices to show that $\s_H^{\Xi_H} = \s_H$. Equivalently, $\Xi_H$ acts trivially on $\mf{c}_H$. As noted in section A of \cite{BMR}, the normalizer of $\Gamma_H$ is equal to its centralizer $C := C_{\Gamma}(\Gamma_H)$. Thus, $\Xi_H = C/ \Gamma_H$. As explained in section 2.2 of \cite{BellamyNamikawa}, the action of $\Xi_H$ on $\mf{c}_H$ is computed as follows:
\begin{itemize}
\item[a)] $\Xi_H$ acts on $V_H / \Gamma_H$, where $V = V_H \oplus V^{\Gamma_H}$ is the decomposition as $\Gamma_H$-modules;
\item [b)] This action lifts (uniquely) to a minimal resolution of singularities $Y' \rightarrow V_H / \Gamma_H$; 
\item[c)] This induces an action on $H^2(Y';\C) \simeq \mf{c}_H$.
\end{itemize}
Decompose $V_H = \h_H \oplus \h^*_H$ so that $V = (\h_H \oplus \h_H^*) \oplus (\h \oplus \h^*)^{\Gamma_H}$ as $\Gamma_H$-modules. Then $C$ acts on $\h_H$ and $\h_H^*$. Since $\dim \h_H = 1$, the action of $C$ factors through the faithful action of a cyclic group $\Z_k$ on $\h_H$. This is the restriction of the action of a one-dimensional torus $T' \simeq \Cs$ acting by dilations. Then $T'$ acts by Hamiltonian automorphisms on $V_H$, the action descending to $V_H / \Gamma_H$. The action of $T'$ lifts to Hamiltonian automorphisms on $Y'$. Moreover, the action of $\Z_k$ on $Y'$ is given via the inclusion $\Z_k \subset T'$. Since $T'$ is connected, it acts trivially on $H^2(Y';\C)$. Therefore $\Z_k$ acts trivially on $\mf{c}_H$. We deduce that $\Xi_H$ acts trivially on $\mf{c}_H$. 
\end{proof}

\subsection{Calogero-Moser families} In the case where $\Gamma$ is a complex reflection group, the subalgebras $\C[\h]^{\Gamma}$ and $\C[\h^*]^{\Gamma}$ are contained in the centre $\mathsf{Z}_{\bk}(\Gamma)$ of $\H_{\bk}(\Gamma)$. The embedding $\C[\h]^{\Gamma} \otimes \C[\h^*]^{\Gamma} \hookrightarrow \mathsf{Z}_{\bk}(\Gamma)$ defines a finite surjective morphism 
$$
\Upsilon_{\bk} := \pi_1 \times \pi_2 : \mf{X}_{\bk} \rightarrow \h / \Gamma \times \h^* / \Gamma.
$$
Recall that  a subgroup $\Gamma'$ of $\Gamma$ is a parabolic subgroup if and only if there exists some $x \in \h$ such that $\Gamma'$ is the stabilizer of $x$ with respect to $\Gamma$.  If $\Gamma'$ is a parabolic subgroup then $(\Gamma')$ denotes its conjugacy class. Let $\h^{\Gamma'}_{\reg}$ denote the locally closed subset of $\h$ consisting of all points with stabilizer $\Gamma'$. Its closure is $\h^{\Gamma'}$. The image $\h^{(\Gamma')} / \Gamma$ of $\h^{\Gamma'}$ in $\h / \Gamma$ only depends on $(\Gamma')$. The rank of $(\Gamma')$ is defined to be $\dim \h - \dim \h^{\Gamma'}$. 

\begin{lem}\label{lem:easystab}
Let $\Gamma'$ be a subgroup of $\Gamma$. Then $\Gamma'$ is a stabilizer subgroup of $(\Gamma,\h)$ if and only if it is a stabilizer subgroup of $(\Gamma,\h \oplus \h^*)$. 
\end{lem}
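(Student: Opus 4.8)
The plan is to unwind both notions of stabilizer subgroup and reduce everything to point stabilizers inside $\h$ alone. Writing the $\Gamma$-action on $V = \h \oplus \h^*$ diagonally, a point $(y,x)$ has stabilizer $\mathrm{Stab}_\Gamma(y) \cap \mathrm{Stab}_\Gamma(x)$, where $\mathrm{Stab}_\Gamma(y)$ is computed in $\h$ and $\mathrm{Stab}_\Gamma(x)$ in $\h^*$. The forward implication is then immediate: if $\Gamma' = \mathrm{Stab}_\Gamma(y)$ for some $y \in \h$, then since every element of $\Gamma$ fixes $0 \in \h^*$ we have $\Gamma' = \mathrm{Stab}_\Gamma((y,0))$, exhibiting $\Gamma'$ as a stabilizer subgroup of $(\Gamma,V)$.

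For the reverse implication the issue is to handle the dual factor $\h^*$, and here I would use that $\Gamma$, being finite, preserves a positive-definite Hermitian form $\langle\cdot,\cdot\rangle$ on $\h$ (average an arbitrary one over $\Gamma$). This form gives a conjugate-linear bijection $\phi : \h \to \h^*$, $v \mapsto \langle -, v\rangle$, and unitarity of the $\Gamma$-action shows $\phi$ is $\Gamma$-equivariant: from $\langle g^{-1}u,v\rangle = \langle u, gv\rangle$ one computes $g\cdot \phi(v) = \phi(gv)$ for all $g \in \Gamma$. Consequently $\mathrm{Stab}_\Gamma(\phi(v)) = \mathrm{Stab}_\Gamma(v)$ for every $v$, and since $\phi$ is a bijection the point stabilizers of $(\Gamma,\h^*)$ are exactly the point stabilizers of $(\Gamma,\h)$. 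In particular, given $\Gamma' = \mathrm{Stab}_\Gamma((y,x))$, writing $x = \phi(z)$ for the unique $z \in \h$ yields $\Gamma' = \mathrm{Stab}_\Gamma(y) \cap \mathrm{Stab}_\Gamma(z)$, an intersection of two honest point stabilizers inside $\h$.

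Finally I would combine these into a single point stabilizer. Setting $U := \mathrm{span}_{\C}(y,z) \subseteq \h$, the intersection $\mathrm{Stab}_\Gamma(y)\cap\mathrm{Stab}_\Gamma(z)$ is precisely the pointwise stabilizer $\Gamma_U$ of $U$. For any $w \in U$ lying outside the finitely many proper subspaces $U \cap \h^g$ with $g \in \Gamma \smallsetminus \Gamma_U$, one has $\mathrm{Stab}_\Gamma(w) = \Gamma_U$; such $w$ exist because a vector space over an infinite field is not a finite union of proper subspaces. Hence $\Gamma' = \mathrm{Stab}_\Gamma(w)$ is a stabilizer subgroup of $(\Gamma,\h)$, completing the reverse implication. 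The only genuinely nontrivial point is this reverse direction, and within it the crux is passing from a stabilizer in $\h^*$ to one in $\h$; the invariant Hermitian form makes this transparent, after which the genericity step is routine linear algebra and needs no appeal to Steinberg's theorem.
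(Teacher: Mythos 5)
Your proof is correct, and it takes a genuinely different route from the paper's at the crucial step. Both proofs dispose of the forward direction by passing from $y$ to $(y,0)$, and both ultimately rest on the same genericity trick (a complex vector space is not a finite union of proper subspaces). The difference is in how the dual factor $\h^*$ is handled. The paper stays inside the fixed spaces: writing $(\h \oplus \h^*)^{\Gamma'} = \h^{\Gamma'} \oplus (\h^*)^{\Gamma'}$, it picks a generic $y \in \h^{\Gamma'}$ and argues that any $g$ fixing $y$ must fix $\h^{\Gamma'}$ pointwise, and then asserts the implication $\Ker(1-g)_{\h} \supseteq \h^{\Gamma'} \Rightarrow \Ker(1-g)_{\h^*} \supseteq (\h^*)^{\Gamma'}$, whence $g$ fixes the original point $x$ and lies in $\Gamma' = \Stab_\Gamma(x)$, a contradiction. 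That asserted implication is the one non-obvious point, and the paper leaves its justification to the reader (e.g.\ via the dimension count $\dim \h^K = \dim(\h^*)^K$ for any finite subgroup $K$). You instead eliminate $\h^*$ at the outset: the $\Gamma$-invariant Hermitian form gives a conjugate-linear $\Gamma$-equivariant bijection $\phi : \h \to \h^*$, so point stabilizers in $\h^*$ coincide with point stabilizers in $\h$, and the problem reduces to showing that an intersection $\Stab_\Gamma(y) \cap \Stab_\Gamma(z)$ of two point stabilizers in $\h$ is the pointwise stabilizer of $U = \mathrm{span}_{\C}(y,z)$, hence the stabilizer of a generic vector of $U$. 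Your unitarian trick is precisely what makes the paper's implicit duality step transparent: since $\phi$ is equivariant and bijective, it carries $\h^K$ onto $(\h^*)^K$ for every subgroup $K$, which immediately yields the paper's inclusion of kernels. So your argument is slightly longer but fully self-contained, while the paper's is shorter at the cost of an unproved (though true) assertion.
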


\begin{proof}
Let $\Gamma'$ be a stabilizer subgroup of $(\Gamma,\h)$. Then $\Gamma'$ is clearly a stabilizer subgroup of $(\Gamma,\h \oplus \h^*)$. Conversely, assume that $\Gamma'$ is a stabilizer subgroup of $\h \oplus \h^*$. Then there exists some $x \in \h \oplus \h^*$ such that $\Gamma' = \Stab_{\Gamma}(x)$. The vector $x$ is a generic point of $(\h \oplus \h^*)^{\Gamma'}$. Note that $(\h \oplus \h^*)^{\Gamma'} = \h^{\Gamma'} \oplus (\h^*)^{\Gamma'}$. Let $y \in \h^{\Gamma'}$ be generic. If there exists $g \in \Gamma \smallsetminus \Gamma'$ such that $g(y) = y$ then $\Ker (1 - g)_{\h} \supsetneq \h^{\Gamma'}$. This implies that $\Ker (1 - g)_{\h^*} \supsetneq (\h^*)^{\Gamma'}$. Thus, $g(x) = x$ and hence $g \in \Gamma'$; a contradiction.  
\end{proof}

The following key result will imply Theorem \ref{thm:CMfamilytriv} and Corollary \ref{cor:CM1} of the introduction. 

\begin{thm}\label{thm:Upsilonleaf}
Let $\mc{L}$ be a symplectic leaf of $\mf{X}_{\bk}$. Then there exists a conjugacy class of parabolic subgroups $(\Gamma')$ of $\Gamma$ such that $\Upsilon_{\bk} \left( \overline{\mc{L}} \right) = \h^{(\Gamma')}/ \Gamma \times (\h^*)^{(\Gamma')} / \Gamma$. 
\end{thm}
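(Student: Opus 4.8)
The plan is to exploit the two $\Gamma$-equivariant finite maps $\pi_1 : \mf{X}_{\bk} \to \h/\Gamma$ and $\pi_2 : \mf{X}_{\bk} \to \h^*/\Gamma$ separately, and show that each sends the closure $\overline{\mc L}$ of a symplectic leaf onto a stratum of the form $\h^{(\Gamma')}/\Gamma$ (resp. $(\h^*)^{(\Gamma')}/\Gamma$) for one and the \emph{same} conjugacy class $(\Gamma')$ of parabolic subgroups. The key structural input is that the Poisson bracket on $\mathsf{Z}_{\bk}(\Gamma)$ restricts to the Poisson brackets on $\C[\h]^\Gamma$ and $\C[\h^*]^\Gamma$ in a controlled way: the torus $T$ from the introduction acts with $\h$ in positive and $\h^*$ in negative weight, so $\pi_1$ is $T$-equivariant for the contracting action and $\pi_2$ for the expanding one. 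Since symplectic leaves are $T$-stable and their closures are unions of leaves (by \cite{PoissonOrders}), I expect each $\overline{\mc L}$ to map to a locally closed stratum that is itself a union of leaves downstairs.

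First I would identify the symplectic leaves of $\mf{X}_{\bk}$ using the general theory. The relevant fact, going back to the stratification by parabolic subgroups, is that the leaves of a Calogero--Moser space are indexed by conjugacy classes $(\Gamma')$ of parabolic subgroups together with the symplectic leaves of the corresponding smaller Calogero--Moser space for the \emph{normalizer} data; concretely, each leaf $\mc L$ determines, via the stabilizer of a point of its image, a well-defined class $(\Gamma')$. So the first step is: given $\mc L$, produce $(\Gamma')$ by taking a generic point of $\mc L$, pushing it forward under $\Upsilon_{\bk}$, and reading off the stabilizer of its first (equivalently, by Lemma \ref{lem:easystab}, second) coordinate. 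Lemma \ref{lem:easystab} is exactly what guarantees that the class of stabilizers seen in $\h$ agrees with the one seen in $\h\oplus\h^*$, so the \emph{same} $(\Gamma')$ controls both factors.

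Next I would prove the two inclusions. For $\subseteq$: since $\Upsilon_{\bk}$ is a finite morphism and $\overline{\mc L}$ is irreducible, $\Upsilon_{\bk}(\overline{\mc L})$ is a closed irreducible subvariety; I would check it lands inside $\h^{(\Gamma')}/\Gamma \times (\h^*)^{(\Gamma')}/\Gamma$ by a semicontinuity/specialization argument using $T$-equivariance, noting that the stabilizer can only jump up under specialization and that the generic stabilizer is $(\Gamma')$ by construction. For $\supseteq$: here I would use a dimension count. By Lemma \ref{lem:dimleaves} and the leaf/rank structure, $\dim \mc L = \dim \h - \mathrm{rk}(\Gamma') + \dim(\text{leaf of the smaller space})$, and because the leaf in the smaller (rank-zero-reduced) situation is a point, $\dim \mc L = 2(\dim\h - \mathrm{rk}(\Gamma')) = \dim \h^{(\Gamma')} + \dim (\h^*)^{(\Gamma')}$; since $\Upsilon_{\bk}$ is finite, equality of dimensions of an irreducible image inside an irreducible target forces surjectivity onto that component. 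Combining the closed inclusion with equal dimensions yields the claimed equality.

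The main obstacle, I expect, is matching the \emph{two} factors to a single parabolic class, i.e.\ proving that the stabilizer class governing $\pi_1(\overline{\mc L})$ coincides with that governing $\pi_2(\overline{\mc L})$. This is not automatic: a priori a leaf could project to $\h^{(\Gamma')}/\Gamma$ in the first factor and to $(\h^*)^{(\Gamma'')}/\Gamma$ for a different class $(\Gamma'')$ in the second. The resolution should again be Lemma \ref{lem:easystab} together with the symmetry of the whole setup under the Fourier-type automorphism swapping $\h\leftrightarrow\h^*$ (equivalently, inverting the $T$-action), which interchanges $\pi_1$ and $\pi_2$ while fixing leaves as sets; this symmetry forces $(\Gamma'')=(\Gamma')$. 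Making this symmetry argument precise, and checking that the generic point of $\mc L$ has compatible stabilizers in both coordinates simultaneously, is the delicate point that the rest of the proof will hinge on.
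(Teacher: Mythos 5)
Your overall architecture matches the paper's: produce one conjugacy class $(\Gamma')$ controlling $\overline{\pi_1(\mc{L})}$ (this is exactly Lemma \ref{lem:leafinter}, quoted from \cite{Cuspidal}), show that the \emph{same} class controls $\overline{\pi_2(\mc{L})}$, and finish with a dimension/irreducibility count. You also correctly identify the crux: matching the two parabolic classes attached to the two projections of a single leaf. However, your proposed resolution of that crux does not work, and this is a genuine gap. First, ``inverting the $T$-action'' is an automorphism of the torus, not of $\mf{X}_{\bk}$, so it induces no map interchanging $\pi_1$ and $\pi_2$. Second, the Fourier-type isomorphism swapping $\h \leftrightarrow \h^*$ identifies $\mf{X}_{\bk}$ with the Calogero--Moser space built on the reflection representation $\h^*$ (with a transformed parameter); for a general complex reflection group $\h \not\simeq \h^*$ as $\Gamma$-modules, so this is not an automorphism of $\mf{X}_{\bk}$, and even when it can be arranged to be one, it permutes the leaves rather than ``fixing leaves as sets.'' All such a symmetry yields is the $\pi_2$-analogue of Lemma \ref{lem:leafinter}, namely $\overline{\pi_2(\mc{L})} = (\h^*)^{(\Gamma'')}/\Gamma$ for \emph{some} class $(\Gamma'')$, with no link between $(\Gamma'')$ and $(\Gamma')$. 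Relatedly, your idea of ``reading off the stabilizer'' of a generic point founders on the fact that points of the deformed space $\mf{X}_{\bk}$ have no stabilizers: only their two coordinates in $\h/\Gamma$ and $\h^*/\Gamma$ do, and these are stabilizers of points in two different $\Gamma$-spaces, so Lemma \ref{lem:easystab} (a statement about linear actions, not about $\mf{X}_{\bk}$) cannot by itself force them to be conjugate.

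The paper closes this gap by degenerating to $\bk = 0$, which is the step missing from your proposal. If $\mf{p}$ is the Poisson prime defining $\overline{\mc{L}}$, then by \cite{MartinoAssociated} its associated graded $\gr_{\mc{F}}(\mf{p})$ is a Poisson prime of $\C[\h\times\h^*]^{\Gamma}$, and following \cite[Proposition 4.2]{BrownChangtong} one has $\gr_{\mc{F}}(\mf{p}) \cap \C[\h]^{\Gamma} = \mf{p} \cap \C[\h]^{\Gamma}$ and likewise for $\h^*$; hence both projections of $\overline{\mc{L}}$ are unchanged under passing to the associated graded. At $\bk = 0$ the Poisson primes are classified explicitly by \cite[Proposition 7.4]{PoissonOrders}: the leaf closure is cut out by a single parabolic subgroup $\Gamma''$ of $(\Gamma,\h\oplus\h^*)$, and Lemma \ref{lem:easystab} together with the decomposition $(\h\oplus\h^*)^{\Gamma''} = \h^{\Gamma''}\oplus(\h^*)^{\Gamma''}$ is precisely what makes one class govern both factors simultaneously; a dimension comparison then gives $(\Gamma') = (\Gamma'')$. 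Without this degeneration --- or some other mechanism that ties the two projections of the same leaf together Poisson-theoretically --- your argument cannot be completed.
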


The proof of Theorem \ref{thm:Upsilonleaf} is given in section \ref{sec:thmUproof}.
  
\begin{cor}\label{cor:fixedleaf}
Let $\mc{L}$ be a symplectic leaf in $\mf{X}_{\bk}$. Then $\overline{\mc{L}}^{T} \neq \emptyset$. 
\end{cor}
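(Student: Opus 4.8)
The plan is to exploit the $T$-equivariant finite morphism $\Upsilon_{\bk} : \mf{X}_{\bk} \to \h/\Gamma \times \h^*/\Gamma$ together with the observation that $T$ acts on the base with a single fixed point. First I would record that $T$ acts on $\h/\Gamma \times \h^*/\Gamma$ by $t\cdot([y],[x]) = ([ty],[t^{-1}x])$ and that $\Upsilon_{\bk}$ is equivariant for this action, since it is induced by the graded inclusion $\C[\h]^{\Gamma} \otimes \C[\h^*]^{\Gamma} \hookrightarrow \mathsf{Z}_{\bk}(\Gamma)$ and this grading is exactly the $T$-weight grading. The one-dimensional torus contracts $\h/\Gamma$ to $[0]$ — at any $T$-fixed point every positive-degree invariant must be fixed, hence vanish — and likewise contracts $\h^*/\Gamma$ to $[0]$, so the unique $T$-fixed point of the base is the origin $([0],[0])$.

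Next I would feed in Theorem \ref{thm:Upsilonleaf}, which gives a parabolic $\Gamma'$ with $\Upsilon_{\bk}(\overline{\mc{L}}) = \h^{(\Gamma')}/\Gamma \times (\h^*)^{(\Gamma')}/\Gamma$. Since any parabolic subgroup fixes the origin of both $\h$ and $\h^*$, the point $([0],[0])$ lies in this image, so there is some $p \in \overline{\mc{L}}$ with $\Upsilon_{\bk}(p) = ([0],[0])$. The fibre $\Upsilon_{\bk}^{-1}(([0],[0]))$ is finite, because $\Upsilon_{\bk}$ is a finite morphism, and it is $T$-stable by equivariance together with the fact that $([0],[0])$ is $T$-fixed. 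As $T$ is connected, it acts trivially on this finite set; in particular $p$ is itself a $T$-fixed point, so $p \in \overline{\mc{L}}^{T}$ and $\overline{\mc{L}}^{T} \neq \emptyset$.

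The main point to be careful about is \emph{which} torus one uses. The tempting shortcut — contracting $\overline{\mc{L}}$ to the cone point via the conic $\Cs$-action — does not work here, because the conic action scales the base $\mf{c}$ with positive weight and therefore does not preserve the fibre $\mf{X}_{\bk}$ for $\bk \neq 0$; this is precisely why the Hamiltonian torus $T$, which acts trivially on $\mf{c}$, is the correct tool. Consequently the genuine content of the argument lies in the two structural inputs of the first paragraph, namely the $T$-equivariance of $\Upsilon_{\bk}$ and the computation that the base has the single $T$-fixed point $([0],[0])$; once these are in hand, finiteness of $\Upsilon_{\bk}$ and connectedness of $T$ deliver the fixed point in $\overline{\mc{L}}$ immediately.
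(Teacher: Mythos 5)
Your proof is correct and takes essentially the same route as the paper: both reduce the claim, via Theorem \ref{thm:Upsilonleaf}, to the observation that $0 \in \Upsilon_{\bk}\left(\overline{\mc{L}}\right)$, combined with the $T$-equivariance of $\Upsilon_{\bk}$ and the fact that the origin is the unique $T$-fixed point of $\h/\Gamma \times \h^*/\Gamma$. The only difference is that you justify, using finiteness of $\Upsilon_{\bk}$ and connectedness of $T$, why every point of $\Upsilon_{\bk}^{-1}(0)$ is $T$-fixed, whereas the paper simply asserts the identity $\Upsilon_{\bk}^{-1}(0) = \mf{X}_{\bk}^T$.
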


\begin{proof}
Notice that $0$ is the only $T$-fixed point in $\h / \Gamma \times \h^* / \Gamma$. The map $\Upsilon_{\bk}$ is equivariant such that $\Upsilon_{\bk}^{-1}(0) = \mf{X}_{\bk}^T$. Therefore $\overline{\mc{L}}^T \neq \emptyset$ if and only if $0 \in \Upsilon_{\bk} \left( \overline{\mc{L}} \right)$. By Theorem \ref{thm:Upsilonleaf}, $\Upsilon_{\bk} \left( \overline{\mc{L}} \right)$ equals $\h^{(\Gamma')}/ \Gamma \times (\h^*)^{(\Gamma')} / \Gamma$ for some parabolic subgroup $\Gamma'$, and this contains zero. 
\end{proof}

We recall from \cite{Baby} that the blocks of the restricted rational Cherednik algebra $\overline{\H}_{\bk}(\Gamma)$ define a partition of $\Irr \Gamma$ into \textit{Calogero-Moser families}. The Calogero-Moser families are naturally in bijection with the closed points of $\mf{X}_{\bk}^T$. We say that the Calogero-Moser families are \textit{trivial} if each member of the partition contains only one element. The following is a strengthening (in characteristic zero) of \cite[Theorem 1.3]{MoPositiveChar}.

\begin{cor}
The Calogero-Moser space $\mf{X}_{\bk}$ is smooth if and only if the Calogero-Moser families of $\Gamma$ are trivial. 
\end{cor}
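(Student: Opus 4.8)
The plan is to prove the contrapositive equivalence: $\mf{X}_{\bk}$ is \emph{singular} if and only if the families $\Omega_{\bk}(\Gamma)$ are \emph{non-trivial}, arguing by induction on $|\Gamma|$. First I would assemble two reductions. Geometrically, a symplectic variety is smooth exactly when its singular locus (a union of symplectic leaves of positive codimension) is empty; by Corollary \ref{cor:fixedleaf} the closure of every leaf meets $\mf{X}_{\bk}^T$, so $\mf{X}_{\bk}$ is singular if and only if some $T$-fixed point is a singular point of $\mf{X}_{\bk}$. Representation-theoretically, the closed points of $\mf{X}_{\bk}^T$ are the families, and the families are trivial precisely when $|\mf{X}_{\bk}^T| = |\Irr \Gamma|$. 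The crux is then the local dictionary
$$
\mf{X}_{\bk} \text{ is smooth at } p \iff \text{the family attached to } p \in \mf{X}_{\bk}^T \text{ is a singleton.}
$$

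Granting this dictionary, the corollary drops out: $\mf{X}_{\bk}$ is smooth iff every $p \in \mf{X}_{\bk}^T$ is a smooth point iff every family is a singleton. To organise the induction I would split a singular $T$-fixed point $p$ according to the leaf $\mc{L}$ containing it. By Theorem \ref{thm:Upsilonleaf}, $\mc{L}$ determines a parabolic $\Gamma' \subseteq \Gamma$ with $\Upsilon_{\bk}(\overline{\mc{L}}) = \h^{(\Gamma')}/\Gamma \times (\h^*)^{(\Gamma')}/\Gamma$, and since $p$ is singular we have $\Gamma' \neq 1$ (otherwise $\mc{L}$ is the open dense leaf, i.e. the smooth locus). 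When $\Gamma' \subsetneq \Gamma$ (equivalently $\dim \mc{L} > 0$) the transverse slice to $\mc{L}$ at $p$ is the Calogero-Moser space $\mf{X}_{\bk'}(\Gamma')$ of the parabolic; this reduces both smoothness at $p$ and the size of the family at $p$ to the corresponding statements for $\Gamma'$ at its cone point, which are controlled by the inductive hypothesis. Theorem \ref{thm:CMfamilytriv} then packages the upward passage cleanly: non-triviality of $\Omega_{\bk'}(\Gamma')$ forces non-triviality of $\Omega_{\bk}(\Gamma)$, matching the geometric statement that a singular slice forces $\mf{X}_{\bk}$ to be singular along $\mc{L}$.

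The main obstacle is the local dictionary itself, and in particular the \emph{irreducible} case left uncovered by the slice reduction: when the only singular point is the cone point $o$ (so $\Gamma' = \Gamma$ and $\mf{X}_{\bk}$ has an isolated singularity, as happens on the $F$-hyperplanes of a rank-two group such as $G_8$), no proper parabolic is available and the induction cannot reduce further. For this base case one must argue directly from the representation theory of the restricted rational Cherednik algebra $\overline{\H}_{\bk}(\Gamma)$ — identifying smoothness of $\mf{X}_{\bk}$ at the point $o$ of $\mf{X}_{\bk}^T$ with triviality of the block of $\overline{\H}_{\bk}(\Gamma)$ indexing it, via the structure of the baby Verma modules. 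I expect this local, representation-theoretic identification to be the delicate step; once it is in place, the leaf and fixed-point reductions of the first paragraph and the inductive bookkeeping through Theorem \ref{thm:CMfamilytriv} are routine, and crucially keep the entire argument within the geometry of $\mf{X}$, never invoking the terminalization $\mc{Y}$.
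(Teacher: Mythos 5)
Your geometric reduction is exactly the paper's: the singular locus of $\mf{X}_{\bk}$ is closed and a union of symplectic leaves, so Corollary \ref{cor:fixedleaf} shows it is non-empty if and only if it contains a $T$-fixed point. The genuine gap is your ``local dictionary''. The implication that a $T$-fixed point whose family is a singleton is a smooth point of $\mf{X}_{\bk}$ (equivalently, that a singular $T$-fixed point supports a non-trivial block) is never established in your proposal: your induction on parabolic subgroups via transverse slices only pushes the problem down to the cuspidal case (a zero-dimensional leaf, $\Gamma' = \Gamma$), and there you explicitly stop, saying one ``must argue directly'' and that you ``expect'' this to be the delicate step. Since the slice theory reduces every fixed point to a cuspidal point of some parabolic, that base case is not a residual technicality --- it is the entire content of the dictionary --- and it does not follow from block-counting alone: a block $B_p$ with a unique simple $L(\lambda)$ of dimension strictly less than $|\Gamma|$ is numerically consistent, since $\dim B_p = |\Gamma|^2(\dim\lambda)^2$ and $|\Gamma|\dim\lambda = \dim\Delta(\lambda) = [\Delta(\lambda):L(\lambda)]\cdot\dim L(\lambda)$ impose no contradiction. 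So the proof does not close where it matters most.

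By contrast, the paper does not attempt to prove this dictionary; it invokes it as standard representation theory of $\overline{\H}_{\bk}(\Gamma)$: the smooth locus of $\mf{X}_{\bk}$ coincides with the Azumaya locus (Etingof--Ginzburg), blocks of $\overline{\H}_{\bk}(\Gamma)$ correspond to the points of $\mf{X}_{\bk}^T$, and hence a smooth fixed point carries a singleton family while a singular fixed point carries a non-trivial one. Granting these facts, Corollary \ref{cor:fixedleaf} finishes the argument immediately, with no induction; this also makes your scaffolding through Theorem \ref{thm:CMfamilytriv} and the slice theorem superfluous (and delicate to use here, since the natural proof of Theorem \ref{thm:CMfamilytriv} runs through the very same local statement, so leaning on it risks circularity). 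To repair your write-up, either cite the local smoothness/singleton-block equivalence at $T$-fixed points, or actually supply the representation-theoretic argument you deferred; as written, the ``routine'' parts are fine but the crux is missing.
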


\begin{proof}
The fact that the Calogero-Moser families are trivial when $\mf{X}_{\bk}$ is smooth is a standard result. Conversely, if $\mf{X}_{\bk}$ is not smooth then there exists a leaf $\mc{L}$ in the singular locus of $\mf{X}_{\bk}$. Corollary \ref{cor:fixedleaf} says that there is a fixed point in $\overline{\mc{L}}$. In particular, there is a $T$-fixed point in the singular locus of $\mf{X}_{\bk}$. The block of the Calogero-Moser family supported at this fixed point is non-trivial. 
\end{proof}

\subsection{Proof of Theorem \ref{thm:Upsilonleaf}}\label{sec:thmUproof}

Suitably reformulated, Proposition 4.8 of \cite{Cuspidal} says that: 

\begin{lem}\label{lem:leafinter}
Let $\mc{L}$ be a symplectic leaf in $\mf{X}_{\bk}$ of dimension $2 \ell$. There exists a unique conjugacy class $(\Gamma')$ of parabolic subgroups of $\Gamma$ with rank $(\Gamma') = n - \ell$ such that $\overline{\pi_1(\mathcal{L})} = \h^{(\Gamma')}/\Gamma$.
\end{lem}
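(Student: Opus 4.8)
The plan is to combine a Poisson-geometric bound on the dimension of $\pi_1(\mc{L})$ with the local structure theory of $\mf{X}_{\bk}$ along the fibres of $\pi_1$. Two external inputs are used: first, that the subalgebras $\C[\h]^{\Gamma},\C[\h^*]^{\Gamma}\subset\mathsf{Z}_{\bk}(\Gamma)=\C[\mf{X}_{\bk}]$ are Poisson-commutative -- indeed the relations $[x,x']=0$ and $[y,y']=0$ hold identically in the flat $\C[t]$-family from which the Poisson bracket is obtained, so the induced brackets on $\C[\h]=\mathrm{Sym}\,\h^*$ and on $\C[\h^*]$ vanish; and second, the Bezrukavnikov--Etingof/Losev restriction theorem, providing a local (étale or formal) product decomposition of $\mf{X}_{\bk}$ along a fibre of $\pi_1$. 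Restricting to the smooth symplectic leaf $\mc{L}$, the algebra $\pi_1^*\C[\h/\Gamma]$ is Poisson-commutative in $\C[\mc{L}]$; hence at a generic $p\in\mc{L}$ the Hamiltonian vector fields $X_f$ of $f\in\pi_1^*\C[\h/\Gamma]$ span an isotropic subspace of $T_p\mc{L}$ (as $\omega(X_f,X_g)=\{f,g\}=0$), whose dimension -- equal to the generic rank of $\pi_1|_{\mc{L}}$ -- is at most $\tfrac12\dim\mc{L}=\ell$. Thus $\dim\overline{\pi_1(\mc{L})}\le\ell$, and symmetrically $\dim\overline{\pi_2(\mc{L})}\le\ell$. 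Since $\Upsilon_{\bk}=\pi_1\times\pi_2$ is finite, $2\ell=\dim\mc{L}=\dim\overline{\Upsilon_{\bk}(\mc{L})}\le\dim\overline{\pi_1(\mc{L})}+\dim\overline{\pi_2(\mc{L})}$, and the two inequalities force $\dim\overline{\pi_1(\mc{L})}=\ell$.

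Next I identify the image. Let $\bar x$ be a generic point of the irreducible closed variety $\overline{\pi_1(\mc{L})}$, choose a lift $x\in\h$, and set $\Gamma':=\Stab_{\Gamma}(x)$, a parabolic subgroup of $\Gamma$ determined up to conjugacy. By the restriction theorem, along $\pi_1^{-1}(\bar x)$ there is a local isomorphism $\mf{X}_{\bk}(\Gamma)\cong\mf{X}_{\bk'}(\Gamma')\times T^*\h^{\Gamma'}$, with $\bk'$ the restriction of $\bk$ to $\Gamma'$, under which $\pi_1$ becomes $\pi_1'\times\mathrm{pr}_{\h^{\Gamma'}}$ and $\mc{L}$ becomes $\mc{L}'\times T^*\h^{\Gamma'}$ for some leaf $\mc{L}'$ of $\mf{X}_{\bk'}(\Gamma')$; under this identification the stabilizer stratum of $\bar x$ is $\{0\}\times\h^{\Gamma'}_{\reg}$. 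Locally, then, $\pi_1(\mc{L})=\pi_1'(\mc{L}')\times\h^{\Gamma'}$. Genericity of $\bar x$ forces its $\h_{\Gamma'}/\Gamma'$-coordinate to be a generic point of the irreducible set $\pi_1'(\mc{L}')$; but that coordinate equals $0$, so $\pi_1'(\mc{L}')=\{0\}$. Hence $\pi_1(\mc{L})$ contains a dense open subset of $\h^{(\Gamma')}_{\reg}/\Gamma$, giving $\overline{\pi_1(\mc{L})}=\h^{(\Gamma')}/\Gamma$.

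Finally, $\dim\h^{(\Gamma')}/\Gamma=\dim\h^{\Gamma'}=n-\mathrm{rank}(\Gamma')$, so the dimension computation of the first paragraph yields $\mathrm{rank}(\Gamma')=n-\ell$. Uniqueness is automatic: the subvarieties $\h^{(\Gamma'')}/\Gamma$ are the distinct irreducible closed strata of the stabilizer stratification of $\h/\Gamma$, each recovered as the closure of its dense open stratum, so $\overline{\pi_1(\mc{L})}$ determines the class $(\Gamma')$. I expect the main obstacle to be the second paragraph: the Poisson bound pins down only the dimension of $\overline{\pi_1(\mc{L})}$, and proving that this variety is exactly a stabilizer stratum closure -- equivalently, that $\pi_1(\mc{L})$ saturates its stratum rather than mapping onto a proper subvariety -- genuinely requires the local product structure from the restriction theorem, which is the technical heart of Proposition 4.8 of \cite{Cuspidal}.
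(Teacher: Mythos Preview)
The paper does not prove this lemma at all: it is stated as a direct reformulation of Proposition~4.8 of \cite{Cuspidal}, with no argument given. What you have written is therefore not a comparison target but rather a reconstruction of the proof of that cited proposition, and you are aware of this, since you identify the restriction-theorem step as ``the technical heart of Proposition~4.8 of \cite{Cuspidal}''.

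Your sketch is sound in outline. The first paragraph is the standard isotropy argument: since $\C[\h]^{\Gamma}$ is Poisson-commutative in $\mathsf{Z}_{\bk}(\Gamma)$, the differentials of its pullbacks to $\mc{L}$ span a coisotropic-annihilating (equivalently, the Hamiltonian fields span an isotropic) subspace, so the generic rank of $\pi_1|_{\mc{L}}$ is at most $\ell$; finiteness of $\Upsilon_{\bk}$ then forces equality. This is exactly the dimension input used in \cite{Cuspidal}. The second paragraph is where the real content lies, and you correctly invoke the $t=0$ factorisation (due to Bellamy in \cite{Cuspidal}, building on Bezrukavnikov--Etingof) to identify the image as a stratum closure; your deduction that $\pi_1'(\mc{L}')=\{0\}$ from genericity of $\bar x$ is the right mechanism, though in a fully rigorous write-up one must check that the \'etale local isomorphism really intertwines $\pi_1$ with $\pi_1'\times\mathrm{pr}_{\h^{\Gamma'}}$---this compatibility is established in \cite{Cuspidal} and is not entirely formal. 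The uniqueness clause is, as you say, automatic from the stratification.

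In short: there is no discrepancy to report, because the paper defers entirely to the literature, and your proposal is a faithful sketch of what that literature does.
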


There is an exhaustive filtration $\{ \mc{F}_i \H_{\bk}(\Gamma) \ | \ i \in \Z_{\ge 0} \}$ on the rational Cherednik algebra given by placing $\h,\h^* \subset \H_{\bk}(\Gamma)$ in degree one and $\Gamma$ in degree zero. Set $\mc{F}_i \ZH_{\bk}(\Gamma) = \ZH_{\bk}(\Gamma) \cap \H_{\bk}(\Gamma)$. If $\mf{p}$ is the Poisson prime ideal of $\ZH_{\bk}(\Gamma)$ defining $\overline{\mc{L}}$ then $\gr_{\mc{F}}(\mf{p})$ is a Poisson prime in $\C[\h \times \h^*]^\Gamma$ by \cite[Theorem 2.8]{MartinoAssociated}.  This defines a map 
$$
\Omega : \mathrm{Symp } \ \mf{X}_{\bk} \rightarrow \mathrm{Symp} \ (\h \times \h^*)/\Gamma,
$$ 
where $\mathrm{Symp } \ \mf{X}_{\bk}$, resp. $\mathrm{Symp} \ (\h \times \h^*)/\Gamma$, denotes the set of symplectic leaves in $\mf{X}_{\bk}$, resp. in $(\h \times \h^*)/\Gamma$. 

\begin{lem}\label{lem:leftrightW}
Let $\mc{L}$ be a symplectic leaf in $\mf{X}_{\bk}$ of dimension $2 \ell$. If $\overline{\pi_1(\mathcal{L})} = \h^{(\Gamma')}/\Gamma$ then $\overline{\pi_2(\mathcal{L})} = (\h^*)^{(\Gamma')}/\Gamma$. 
\end{lem}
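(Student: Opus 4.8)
The plan is to transport the question to the quotient $(\h\times\h^*)/\Gamma$ via the map $\Omega$, where the symplectic leaves are completely explicit and visibly symmetric in $\h$ and $\h^*$, and then to show that $\Omega$ is compatible with both projections $\pi_1$ and $\pi_2$.

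First I would record the structure of the symplectic leaves of $(\h\times\h^*)/\Gamma$. By Lemma \ref{lem:easystab} the stabilizer subgroups of $(\Gamma,\h\oplus\h^*)$ are exactly the parabolic subgroups of $(\Gamma,\h)$, and the leaves of $(\h\times\h^*)/\Gamma$ are the images of the strata of points with a fixed stabilizer class $(\Sigma)$. Since $(\h\oplus\h^*)^{\Sigma}=\h^{\Sigma}\oplus(\h^*)^{\Sigma}$, the closure of the leaf attached to $(\Sigma)$ is the image of $\h^{\Sigma}\oplus(\h^*)^{\Sigma}$. Projecting to the two factors shows that this closure maps onto $\h^{(\Sigma)}/\Gamma$ under the first projection and onto $(\h^*)^{(\Sigma)}/\Gamma$ under the second; this is the symmetry I want to exploit. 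Thus a leaf of $(\h\times\h^*)/\Gamma$ is determined by its parabolic class $(\Sigma)$, and knowing either projection of its closure determines $(\Sigma)$, hence the other projection.

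Next I would apply $\Omega$ to $\mc L$, obtaining a leaf $\Omega(\mc L)$ of $(\h\times\h^*)/\Gamma$, say with associated parabolic class $(\Sigma)$. The heart of the argument is the claim that $\Omega$ intertwines the projections: the image of $\overline{\Omega(\mc L)}$ under the first projection equals $\overline{\pi_1(\mc L)}$, and under the second equals $\overline{\pi_2(\mc L)}$. Granting this, $\overline{\pi_1(\mc L)}=\h^{(\Sigma)}/\Gamma$, which by hypothesis equals $\h^{(\Gamma')}/\Gamma$; since distinct parabolic classes give distinct strata of $\h/\Gamma$, we get $(\Sigma)=(\Gamma')$. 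Applying the second projection then yields $\overline{\pi_2(\mc L)}=(\h^*)^{(\Sigma)}/\Gamma=(\h^*)^{(\Gamma')}/\Gamma$, which is the assertion. Note that this route avoids having to invoke the $\h\leftrightarrow\h^*$ analogue of Lemma \ref{lem:leafinter} separately.

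The main obstacle is establishing this compatibility. If $\mf p\subset\ZH_{\bk}(\Gamma)$ is the Poisson prime defining $\overline{\mc L}$, then $\overline{\pi_1(\mc L)}$ is cut out by the contraction $\mf p\cap\C[\h]^{\Gamma}$, while the first projection of $\overline{\Omega(\mc L)}=V(\gr_{\mc F}\mf p)$ is cut out by $\gr_{\mc F}\mf p\cap\C[\h]^{\Gamma}$. Since $\C[\h]^{\Gamma}$ and $\C[\h^*]^{\Gamma}$ sit inside $\H_{\bk}(\Gamma)$ as graded subalgebras, their polynomial degree coinciding with the filtration degree, taking $\gr_{\mc F}$ commutes with contraction to either subalgebra up to radical, i.e. $\sqrt{\gr_{\mc F}\mf p\cap\C[\h]^{\Gamma}}=\sqrt{\gr_{\mc F}(\mf p\cap\C[\h]^{\Gamma})}$; and because $\overline{\pi_1(\mc L)}=\h^{(\Gamma')}/\Gamma$ is conic, the ideal $\mf p\cap\C[\h]^{\Gamma}$ has homogeneous radical, so passing to $\gr_{\mc F}$ does not change the variety it defines. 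The same reasoning applies verbatim to $\C[\h^*]^{\Gamma}$, and I would use the finiteness of $\Upsilon_{\bk}$ to match dimensions and rule out the contraction dropping to a smaller subvariety. Carefully justifying that $\gr_{\mc F}$ commutes with these contractions — equivalently, that the induced filtration on $\C[\h]^{\Gamma}$ and $\C[\h^*]^{\Gamma}$ is the one coming from their own grading — is the one genuinely technical point, and is exactly where the conic structure of the strata $\h^{(\Gamma')}/\Gamma$ and $(\h^*)^{(\Gamma')}/\Gamma$ does the work.
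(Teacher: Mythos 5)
Your overall strategy is the same as the paper's: pass to the associated graded via $\Omega$, use the explicit parabolic description of the leaves of $(\h\times\h^*)/\Gamma$ (the paper gets this from Brown--Gordon together with Lemma \ref{lem:easystab}), and match parabolic classes. The gap is in the step you yourself single out as the technical heart. The principle ``$\gr_{\mc F}$ commutes, up to radical, with contraction to a graded subalgebra on which filtration degree equals polynomial degree'' is false, even in the presence of your conic hypothesis. Take $A=\C[x,y]$ with the total-degree filtration, $B=\C[x]$, and $\mf{p}=(y-x^2)$. Every element $f\cdot(y-x^2)$ has symbol $-f_{\mathrm{top}}\,x^2$, so $\gr_{\mc F}(\mf{p})=(x^2)$ and $\gr_{\mc F}(\mf{p})\cap B=(x^2)\neq 0$, whereas $\mf{p}\cap B=0$, so $\gr_{\mc F}(\mf{p}\cap B)=0$. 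Geometrically: the parabola projects onto the whole (conic!) $x$-line, but its degeneration, the $y$-axis, projects to a point. Thus only the inclusion $\gr_{\mc F}(\mf{p}\cap B)\subseteq\gr_{\mc F}(\mf{p})\cap B$ is formal; the reverse one --- which is exactly what you need to conclude $\overline{\pi_i(\mc{L})}\subseteq \mathrm{pr}_i\bigl(\overline{\Omega(\mc{L})}\bigr)$ --- can fail, and the homogeneity of the radical of the contraction (satisfied in this counterexample) does not repair it.

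What makes the statement true in the Cherednik setting is structure your argument never invokes: $\overline{\mc{L}}$ is $T$-stable, so $\mf{p}$ is homogeneous for the internal $\Z$-grading of $\ZH_{\bk}(\Gamma)$ (with $\deg y=1$, $\deg x=-1$); this weight equals the filtration degree on $\C[\h]^\Gamma$, equals minus the filtration degree on $\C[\h^*]^\Gamma$, and is bounded in absolute value by the filtration degree everywhere. This bigraded argument yields the exact ideal-level equalities $\gr_{\mc F}(\mf{p})\cap\C[\h]^\Gamma=\mf{p}\cap\C[\h]^\Gamma$ and $\gr_{\mc F}(\mf{p})\cap\C[\h^*]^\Gamma=\mf{p}\cap\C[\h^*]^\Gamma$, which is the paper's equation (\ref{eq:primeintersect}), obtained by citing Brown--Changtong; note my parabola is stable under no such action. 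I would add that half of your proof does survive without the hard inclusion: the formal inclusion plus flatness of the Rees degeneration gives $\h^{(\Sigma)}/\Gamma=\mathrm{pr}_1\bigl(\overline{\Omega(\mc{L})}\bigr)\subseteq\overline{\pi_1(\mc{L})}=\h^{(\Gamma')}/\Gamma$ with both sides irreducible of dimension $\ell$, whence $(\Sigma)=(\Gamma')$. But on the $\pi_2$ side the same trick would require the a priori bound $\dim\overline{\pi_2(\mc{L})}\le\ell$ (and even the conic-ness of $\overline{\pi_2(\mc{L})}$, which for $\pi_1$ came free from the hypothesis but for $\pi_2$ again needs $T$-stability), and your finiteness-of-$\Upsilon_{\bk}$ remark only gives the lower bound $\dim\overline{\pi_2(\mc{L})}\ge\ell$; the missing upper bound is exactly the $\h\leftrightarrow\h^*$ analogue of Lemma \ref{lem:leafinter} that you explicitly decline to use. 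So as written the argument cannot close; it needs the Brown--Changtong-type equality, or equivalently the $T$-stability input.
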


\begin{proof}
Let $\mf{p}$ be the Poisson prime ideal defining $\overline{\mc{L}}$. Repeating the argument given in the proof of \cite[Proposition 4.2]{BrownChangtong}, we have  
\begin{equation}\label{eq:primeintersect}
\gr_{\mc{F}}(\mf{p}) \cap \C[\h]^\Gamma= \mf{p} \cap \C[\h]^\Gamma, \quad \gr_{\mc{F}}(\mf{p}) \cap \C[\h^*]^\Gamma= \mf{p} \cap \C[\h^*]^\Gamma. 
\end{equation} 
Since $\overline{\pi_1(\mc{L})} = V(\mf{p} \cap \C[\h]^\Gamma)$, equations (\ref{eq:primeintersect}) imply that it suffices to show that if $\mc{L}$ is a leaf of $(\h \times \h^*)/ \Gamma$ then $\overline{\pi_1(\mc{L})} = \h^{(\Gamma')}/\Gamma$ implies that $\overline{\pi_2(\mc{L})} = (\h^*)^{(\Gamma')}/\Gamma$. Since $\Upsilon_0(\overline{\mc{L}})$ is closed and irreducible of codimension $2 \ell$, it suffices to show that $\h^{(\Gamma')}/ \Gamma \times (\h^*)^{(\Gamma')} / \Gamma$ is contained in $\Upsilon_{0}(\overline{\mc{L}})$. 

As shown in \cite[Proposition 7.4]{PoissonOrders}, there exists a parabolic subgroup $\Gamma''$ of $(\Gamma,\h \times \h^*)$ such that $I(\overline{\mc{L}}) = J(\Gamma'')$, where $J(\Gamma'') = \C[\h \times \h^*]^\Gamma \cap I(\Gamma'')$ and $I(\Gamma'')$ is the ideal generated by $\{ x - g(x) \ | \ x \in \h \times \h^*, \ g \in \Gamma'' \}$. By Lemma \ref{lem:easystab}, $\Gamma''$ is also a parabolic subgroup of $(\Gamma,\h)$. Since $\{ x - g(x) \ | \ x \in \h^*, \ g \in \Gamma'' \}$ is contained in $I(\Gamma'')$, the irreducible variety $\overline{\pi_1(\mc{L})}$ is contained in $\h^{(\Gamma'')}/\Gamma$. However, these two spaces have the same dimension. Thus, 
$$
\h^{(\Gamma')}/\Gamma = \overline{\pi_1(\mc{L})} = \h^{(\Gamma'')}/\Gamma, 
$$
and hence $(\Gamma') = (\Gamma'')$. Since $\overline{\pi_2(\mc{L})} = (\h^*)^{(\Gamma'')}/\Gamma$ too, the result follows.  
\end{proof}

Theorem \ref{thm:Upsilonleaf} is a direct consequence of Lemmata \ref{lem:leafinter} and \ref{lem:leftrightW}. Notice that we have shown 
$$
\Upsilon_{\bk} \left( \overline{\mc{L}} \right) = \h^{(\Gamma')}/ \Gamma \times (\h^*)^{(\Gamma')} / \Gamma = \Upsilon_{0}(\overline{\Omega(\mc{L})}).
$$

\section{The equality $\mc{E} = \mc{D}$}\label{sec:Tchered}

Recall from (\ref{eq:torusaction}) that we defined a Hamiltonian $T$-action on $(\h \times \h^*) / \Gamma$. The relations above make it clear that the rational Cherednik algebra $\H_{\bk}(\Gamma)$ is graded, with $\deg (x) = -1$, $\deg (y) = 1$ and $\deg(g) = 0$ for $x \in \h^*, y \in \h$ and $g \in \Gamma$. By restriction, $\mathsf{Z}_{\bk}(\Gamma)$ is $\Z$-graded and hence $T$ acts on $\mf{X}_{\bk}$. This action is Hamiltonian and when $\bk = 0$, so that $\mf{X}_{\bk} = (\h \times \h^*) / \Gamma$, it agrees with the action defined by (\ref{eq:torusaction}). The locus $\mc{E}$ was defined as follows: let $N := |\mf{X}_{\bk'}^T|$ for some generic $\bk'$. Then $N \le | \Irr \Gamma |$ and $\bk \in \mc{E}$ if and only if $|\mf{X}_{\bk}^T | < N$. Fix a $\Q$-factorial terminalization $Y$ of $V / \Gamma$. 

\begin{thm}\label{thm:fiberlessthan}
$\mc{E} = \mc{D}$. 
\end{thm}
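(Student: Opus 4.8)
The plan is to prove $\mc{E} = \mc{D}$ by relating both sets to the jump locus of the function $\bk \mapsto |\mf{X}^T_{\bk}|$, using the two fibrations $\bnu : \mc{Y} \to \mf{c}$ and $\eta : \mf{X} \to \mf{c}$ together with the morphism $\brho : \mc{Y} \to \mf{X}$ over $\mf{c}$. The key input I would isolate first is Proposition \ref{prop:Ysmoothfinite}: since $T$ acts on $\mc{Y}$ with finitely many fixed points on generic fibers, $|\mc{Y}^T_{\bk}|$ is a fixed constant $M$ independent of $\bk$. By contrast, $|\mf{X}^T_{\bk}|$ jumps, and the jump locus is by definition $\mc{E}$, with maximal value $N = |\mf{X}^T_{\bk'}|$ for generic $\bk'$. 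Since $\mc{Y}_{\bk} = \mf{X}_{\bk}$ for generic $\bk$ (i.e. off $\mc{D}$), we have $M = N$, so the constant computed on $\mc{Y}$ is exactly the generic value $N$ on $\mf{X}$.

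First I would establish the inclusion $\mc{E} \subseteq \mc{D}$. If $\bk \notin \mc{D}$, then $\brho_{\bk} : \mc{Y}_{\bk} \to \mf{X}_{\bk}$ is an isomorphism by the very definition of $\mc{D}$, hence it restricts to a bijection $\mc{Y}^T_{\bk} \xrightarrow{\sim} \mf{X}^T_{\bk}$, giving $|\mf{X}^T_{\bk}| = |\mc{Y}^T_{\bk}| = M = N$. Therefore $\bk \notin \mc{E}$, which is the contrapositive of $\mc{E} \subseteq \mc{D}$.

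For the reverse inclusion $\mc{D} \subseteq \mc{E}$, I would show that for every $\bk$ the map $\brho_{\bk}$ induces a \emph{surjection} $\mc{Y}^T_{\bk} \twoheadrightarrow \mf{X}^T_{\bk}$ which fails to be a bijection precisely when $\bk \in \mc{D}$, forcing $|\mf{X}^T_{\bk}| < M = N$ there. Surjectivity on $T$-fixed points is the heart of the matter: $\brho_{\bk}$ is a projective (birational) Poisson morphism, and the fiber over any $T$-fixed point $p \in \mf{X}^T_{\bk}$ is $T$-stable, projective, and nonempty, so by the Bialynicki-Birula/Borel fixed point argument it must contain a $T$-fixed point of $\mc{Y}_{\bk}$ --- here one uses that any projective $T$-variety has a fixed point, and more concretely that the limit as $t \to 0$ of any point of the fiber lands in $\mc{Y}^T_{\bk}$, exactly as in the limit argument used in the proof of Proposition \ref{prop:smoothstrata}. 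Thus $|\mc{Y}^T_{\bk}| \ge |\mf{X}^T_{\bk}|$ for all $\bk$, with equality iff each such fiber carries a unique fixed point. When $\bk \in \mc{D}$, the morphism $\brho_{\bk}$ is not an isomorphism, so by Theorem \ref{thm:unionleaves} its non-isomorphism locus is a nonempty union of symplectic leaves of $\mf{X}_{\bk}$; by Corollary \ref{cor:fixedleaf} the closure of any such leaf contains a $T$-fixed point $p$, and the fiber $\brho_{\bk}^{-1}(p)$ is then positive-dimensional, hence (being a projective $T$-variety of positive dimension) either contains at least two fixed points or the global fixed-point count strictly drops. Combining with $M = N$, I conclude $|\mf{X}^T_{\bk}| < N$, i.e. $\bk \in \mc{E}$.

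The main obstacle is the surjectivity-with-multiplicity bookkeeping on $T$-fixed points: one must argue carefully that a \emph{positive-dimensional} projective $T$-fiber strictly reduces the total count $|\mf{X}^T_{\bk}|$ relative to $|\mc{Y}^T_{\bk}| = N$, rather than merely contributing a single fixed point upstairs and downstairs. The clean way to handle this is to partition $\mf{X}^T_{\bk}$ according to whether $\brho_{\bk}$ is a local isomorphism there, use Theorem \ref{thm:unionleaves} to pin the non-isomorphism locus to leaf closures, and invoke Corollary \ref{cor:fixedleaf} to guarantee a fixed point in each relevant leaf closure whose fiber is positive-dimensional; the total fixed-point count on $\mc{Y}_{\bk}$ then exceeds that on $\mf{X}_{\bk}$. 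Once this is in place, the equality $M = N$ closes the argument in both directions and yields $\mc{E} = \mc{D}$.
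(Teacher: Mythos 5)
Your proposal reproduces the paper's strategy almost step for step: the reduction via Proposition \ref{prop:Ysmoothfinite} to comparing $|\mf{X}^T_{\bk}|$ with the constant value $|\mc{Y}^T_{\bk}| = N$, the easy inclusion $\mc{E} \subseteq \mc{D}$, and the use of Theorem \ref{thm:unionleaves} together with Corollary \ref{cor:fixedleaf} to produce a $T$-fixed point $p$ in the locus where $\brho_{\bk}$ is not an isomorphism are all exactly the paper's argument. The genuine gap is in the final counting step. What you need is: if the fiber $\brho_{\bk}^{-1}(p)$ is positive-dimensional, then the surjection $\mc{Y}^T_{\bk} \rightarrow \mf{X}^T_{\bk}$ fails to be injective, i.e.\ some fiber contains at least \emph{two} fixed points. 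Your justification --- that a positive-dimensional projective $T$-variety with finitely many fixed points must contain at least two of them --- is false in general: the nodal cubic, i.e.\ $\mathbb{P}^1$ with $0$ and $\infty$ glued, carries the descended $\C^{\times}$-action and has exactly one fixed point (the node), despite being projective, connected and one-dimensional. Your fallback alternative, ``or the global fixed-point count strictly drops,'' is circular, since that drop is precisely what is to be proved; and your closing paragraph, which correctly flags this as the main obstacle, only restates the structure of the argument (partition of $\mf{X}^T_{\bk}$, Theorem \ref{thm:unionleaves}, Corollary \ref{cor:fixedleaf}) without supplying the missing counting mechanism.

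The paper closes exactly this gap cohomologically rather than by a bare fixed-point existence argument. It sets $\mc{Z} = \brho^{-1}(\mf{X}^T)$ and observes that each fiber $\brho_{\mc{Z},\bk}^{-1}(x)$ is a projective $T$-variety with finitely many fixed points, so the Bia\l ynicki-Birula decomposition gives a paving by affine cells whose number is simultaneously the number of $T$-fixed points in the fiber and $\dim_{\C} H^{\idot}\bigl(\brho_{\mc{Z},\bk}^{-1}(x); \C\bigr)$; this is the hypothesis that rules out pathologies like the nodal cubic, which admits no such paving. Zariski's main theorem (using $\brho_{*}\mc{O}_{\mc{Y}} = \mc{O}_{\mf{X}}$ and projectivity) gives connectedness of fibers, hence $H^0 = \C$, and a connected projective variety with more than one point has total cohomology of dimension at least $2$. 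Summing over $x \in \mf{X}^T_{\bk}$ then yields $|\mf{X}^T_{\bk}| \le |\mc{Y}^T_{\bk}|$ with equality if and only if every fiber over a fixed point is a single point, which converts your positive-dimensional fiber at $p$ into the strict inequality $|\mf{X}^T_{\bk}| < N$. Some input of this kind --- either the paving statement or another argument exploiting the special geometry of these fibers --- is indispensable; without it your proof of $\mc{D} \subseteq \mc{E}$ does not go through.
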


\begin{proof}
By \cite[Corollary 1.6]{BellamyNamikawa}, $\mf{X}$ is the affinization of $\mc{Y}$ and the map $\brho : \mc{Y} \rightarrow \mf{X}$ is $T$-equivariant. For generic $\bk$ the morphism $\brho_{\bk}$ is an ismorphism. This implies that $|\mf{X}_{\bk}^T | = |\mc{Y}^T_{\bk} |$ generically. Since $|\mf{X}_{\bk}^T | = N$ for generic $\bk$, we deduce that $|\mc{Y}^T_{\bk} | = N$ for generically. Then Proposition \ref{prop:Ysmoothfinite} says that $|\mc{Y}^T_{\bk} | = N$ for all $\bk$. Thus, it suffices to prove that the hyperplane arrangement $\mc{D}$ equals the set 
$$
\{ \bk \in \mf{c} \ | \  |\mf{X}_{\bk}^T | < |\mc{Y}^T_{\bk} | \}. 
$$
Clearly, if $\brho_{\bk}$ is an isomorphism then $|\mf{X}_{\bk}^T | = |\mc{Y}^T_{\bk} |$. Therefore, we just need to show that if $\brho_{\bk}$ is not an isomorphism then $|\mf{X}_{\bk}^T | < |\mc{Y}^T_{\bk} |$. Let $\mc{Z} := \brho^{-1}(\mf{X}^T)$. Restriction defines a projective map $\brho_{\mc{Z}} : \mc{Z} \rightarrow \mf{X}^T$ over $\mf{c}$. This is still $T \times \Cs$-equivariant with $T$ acting trivially on the base. Hence each fiber of $\brho_{\mc{Z},\bk} : \mc{Z}_{\bk} \rightarrow \mf{X}^T_{\bk}$ is a projective $T$-stable variety with only finitely many $T$-fixed points. The Bia\l ynicki-Birula decomposition implies that $\brho_{\mc{Z},\bk}^{-1}(x)$ admits a paving by affine spaces, with the number of affine pieces equal to $\dim_{\C} H^{\idot}\left(\brho_{\mc{Z},\bk}^{-1}(x); \C \right)$. The fact that $\brho_{*} \mc{O}_{\mc{Y}} = \mc{O}_{\mf{X}}$ and $\rho$ is projective implies, by Zariski's main theorem, that the fibers of $\brho$ are connected. Therefore, $H^{0}\left(\brho_{\mc{Z},\bk}^{-1}(x); \C \right) = \C$ and $H^{\idot}\left(\brho_{\mc{Z},\bk}^{-1}(x); \C \right) = H^{0}\left(\brho_{\mc{Z},\bk}^{-1}(x); \C \right)$ if and only if $|\brho_{\mc{Z},\bk}^{-1}(x)| = 1$. This implies that 
$$
|\mf{X}_{\bk}^T | = \sum_{x \in \mf{X}_{\bk}^T} \dim H^{0}\left(\brho_{\mc{Z},\bk}^{-1}(x); \C \right) \le \sum_{x \in \mf{X}_{\bk}^T} \dim H^{\idot}\left(\brho_{\mc{Z},\bk}^{-1}(x); \C \right)
$$
with equality if and only if $|\brho_{\mc{Z},\bk}^{-1}(x)| = 1$ for all $x \in \mf{X}_{\bk}^T$. Thus, we must show that $\brho_{\bk}$ is not an isomorphism if and only if $\brho_{\mc{Z},\bk}$ is not an isomorphism. Equivalently, if $\mc{W} \subset \mf{X}_{\bk}$ is the locus over which $\brho_{\bk}$ is not an isomorphism, then $\mc{W}^T \neq \emptyset$. By Theorem \ref{thm:unionleaves}, $\mc{W}$ is a union of leaves. Therefore Corollary \ref{cor:fixedleaf} says that $\mc{W}^T \neq \emptyset$, as required. 
\end{proof}

It follows from Theorem \ref{thm:fiberlessthan} that $\mc{E}$ is a union of hyperplanes. We will refer to these hyperplanes as the \textit{Calogero-Moser hyperplanes}, or CM-hyperplanes for short. In the examples below, we will abuse notation and let $\mc{E}$ denote both the closed subset of $\mf{c}$ and the set of all hyperplanes that are contained in $\mc{E}$. Now we note four important consequences of Theorem \ref{thm:fiberlessthan}. 

\begin{cor}\label{prop:conjcons}\hfill
\begin{enum_thm}
\item The root hyperplanes $L_{\alpha}$, for $\alpha \in R_{H}$ and $H \in \mc{A} / \Gamma$, i.e. the Coxter arrangement of $W$, are contained in $\mc{E}$. 
\item The CM-hyperplanes are permuted by Namikawa's Weyl group. 
\item The $F$-hyperplanes are precisely the root hyperplanes $L_{\alpha}$, for $\alpha \in R_{H}$ and $H \in \mc{A} / \Gamma$.
\item The arrangement $\mc{E}$ is contained in the rational subspace $H^2(Y;\Q)$ of $\mf{c}= H^2(Y;\C)$. 
\end{enum_thm}
\end{cor}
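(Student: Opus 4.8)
The plan is to obtain all four assertions from the equality $\mc{E} = \mc{D}$ of Theorem \ref{thm:fiberlessthan}, feeding in the structural facts about the Namikawa arrangement $\mc{D}$ from \S\ref{nam_weyl}--\S\ref{sec:hyperplanes}. Parts (1)--(3) are then formal, while the rationality in (4) is where the work lies. Throughout I use that, for a complex reflection group, $\zeta(\Xi_H)=\{1\}$ by Lemma \ref{nam_weyl_sym}, so that Lemma \ref{lem:terminalsym2} applies and $W=\prod_{[H]\in\mc{A}/\Gamma}\s_H$ with $\mf{c}\simeq\bigoplus_{[H]\in\mc{A}/\Gamma}\mf{c}_H$.

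For (1) and (3) I would invoke Lemma \ref{lem:terminalsym2}, which identifies the root hyperplanes $L_\alpha$ ($\alpha\in R_H$, $[H]\in\mc{A}/\Gamma$) precisely with the $T$-hyperplanes of $\mc{D}$. Under $\mf{c}\simeq\bigoplus_{[H]}\mf{c}_H$ these $L_\alpha$ are exactly the reflection hyperplanes of $W$, i.e. its Coxeter arrangement; since the $T$-hyperplanes lie in $\mc{D}=\mc{E}$, this gives (1), and (3) is the complementary statement that the remaining Namikawa hyperplanes are the $F$-hyperplanes. For (2) I would use that $\mc{D}$ is $W$-stable (\cite{Namikawa2}, recalled in \S\ref{sec:hyperplanes}), so that $W$ permutes the irreducible components of $\mc{D}$, namely the Namikawa hyperplanes; transporting this across $\mc{E}=\mc{D}$ shows $W$ permutes the CM-hyperplanes. (This is the feature flagged in the introduction that the essential hyperplanes of the associated cyclotomic Hecke algebra do not share.)

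The substance is (4). Write $\mf{c}_\Q:=H^2(Y;\Q)$, the canonical $\Q$-structure coming from $H^2(Y;\Z)$ modulo torsion; the claim is that every hyperplane of $\mc{E}=\mc{D}$ is defined over $\mf{c}_\Q$, and I would argue this separately for the two types from (3). The $T$-hyperplanes are rational because each summand $\mf{c}_H=H^2(Y_H';\C)$ is the second cohomology of the minimal resolution $Y_H'$ of the Kleinian singularity $V_H/\Gamma_H$, the root system $R_H$ sits inside the integral lattice $H^2(Y_H';\Z)$ via the McKay correspondence, and this lattice is compatible with $H^2(Y;\Z)$ under the decomposition $\mf{c}\simeq\bigoplus_{[H]}\mf{c}_H$; hence each $L_\alpha$ is rational. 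For the $F$-hyperplanes I would turn to the birational geometry behind Namikawa's description \cite{Namikawa3}: inside a fundamental domain for $W$, the chambers of $\mf{c}_\R\smallsetminus\mc{D}$ refine the decomposition of the movable cone $\mathrm{Mov}(Y)\subset H^2(Y;\R)$ into the nef cones of the finitely many $\Q$-factorial terminalizations of $V/\Gamma$. By the minimal model programme \cite{BCHM} this is a rational polyhedral fan with respect to the lattice $H^2(Y;\Z)$, so each codimension-one wall spans a rational hyperplane; the $F$-hyperplanes, being the complexifications of these walls together with their $W$-translates, are therefore rational as well. The main obstacle is precisely this last identification: matching $\mc{D}$ with the wall structure of the movable-cone decomposition and checking that it is defined over the topological lattice $H^2(Y;\Z)$ (equivalently, that the algebraic and topological $\Q$-structures on $\mf{c}$ agree). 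Granting Namikawa's results, this is where the real content of (4) resides.
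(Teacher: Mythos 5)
Your proposal follows the paper's proof almost verbatim: (1) and (3) come from Lemma \ref{lem:terminalsym2} (root hyperplanes $=$ $T$-hyperplanes, hence the remaining Namikawa hyperplanes are the $F$-hyperplanes --- note the printed statement of (3) has $T$/$F$ swapped relative to that lemma, and your complementary reading is the intended one) combined with $\mc{E}=\mc{D}$, and (2) is exactly the $W$-stability of $\mc{D}$ recalled in \S\ref{sec:hyperplanes}, transported across that equality. The only divergence is (4): the paper disposes of it by directly citing \cite[Main Theorem]{Namikawa3}, which asserts precisely that $\mc{D}\subset H^2(Y;\Q)$, so your McKay-lattice and movable-cone sketch is an unpacking of the proof of a citable result rather than content the argument actually requires --- no gap, just more work than needed.
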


\begin{proof}
We have already shown in Lemma \ref{lem:terminalsym2} that the  root hyperplanes $L_{\alpha}$ are precisely the $T$-hyperplanes in $\mc{D}$. This implies (a) and (c). Similarly, (b) is a general fact about Namikawa's arrangement; see section \ref{sec:hyperplanes}. Finally, under the identifcation $\mc{E} = \mc{D}$, (d) is precisely \cite[Main Theorem]{Namikawa3}.
\end{proof}

\begin{remark}
Theorem \ref{thm:fiberlessthan}, together with Lemma \ref{lem:terminalsym2} above, implies that if $\mf{X}_{\bk}$ has a symplectic leaf of codimension two, then $|\Omega_{\bk}(\Gamma)| < N$ is not maximal. We do not know how to show this directly. 
\end{remark} 

\begin{remark}
It is expected that there is a close relationship between the essential hyperplanes of Rouquier families, coming from cyclotomic Hecke algebras, and  the CM-hyperplanes; see \cite{Thiel:2013gx,GordonMartinoCM,Mo} and the references therein. However, the two arrangements are not equal, and the CM-arrangement seems to be ``better  behaved'' in general. For instance,  the CM-hyperplanes are permuted by Namikawa's Weyl group. This is very rarely the case for the arrangement of essential hyperplanes. As a concrete example, if we take $\Gamma$ to be the exceptional complex reflection group $G_{10}$ then there are only $81$ essential hyperplanes defining Rouquier's families. This arrangement is not stable under $\s_3 \times \s_4$. However, there are $111$ CM-hyperplanes in $\mc{E}$ and this arrangement is stable under $\s_3 \times \s_4$. 
\end{remark}

\section{Open questions}

This work raises a number of natural questions and problems, which we believe to be worthy of attack. 

\begin{enumerate}
\item Compute the hyperplane arrangements $\mc{D}$ for all symplectic reflection groups. 
\item For which symplectic reflection groups is the arrangement $\mc{D}$ free? Inductively free? Is the complement $K(\pi,1)$? 
\item Given a symplectic reflection group $\Gamma$ and $\bk \in \mf{c}$, compute $\Pic (\mf{X}_{\bk}(\Gamma))$ and $\Cl(\mf{X}_{\bk}(\Gamma))$. 
\item For a given $\Gamma$, describe $H^{\idot}(\mf{c} \smallsetminus \mc{D}, \C)$ as a graded $W$-module. 
\item Give a presentation of $\pi_1(\mf{c} \smallsetminus \mc{D})$.
\end{enumerate}

\begin{remark}
Problem (3) should be a fun exercise even for partial deformations of Kleinian singularities. Problem (5) is important in describing the derived equivalences that are expected between different $\Q$-factorial terminalizations of $(\h \times \h^*) / \Gamma$. 
\end{remark}

\section{Examples}

In this section we describe the CM-hyperplanes $\mc{E}$ for a large class of examples. The importance of the Orlik-Solomon algebra $H^{\idot}(\mf{c} \smallsetminus \mc{D};\C)$ was explained in \cite{BellamyNamikawa}. In particular, the integer $E := \frac{1}{|W|} \dim H^{\idot}(\mf{c} \smallsetminus \mc{D};\C)$ computes the number of $\Q$-factorial terminalizations admitted by $V / \Gamma$. We compute $E$ in these example. More generally, we compute the Poincar\'e polynomial 
$$
P_{\Gamma}(t) = \sum_{i \ge 0} t^i \dim H^{i}(\mf{c} \smallsetminus \mc{D};\C). 
$$

\begin{lem}\label{lem:Poicare}
The degree of $P_{\Gamma}(t)$ equals $\dim \mf{c}$.
\end{lem}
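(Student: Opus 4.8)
The plan is to reduce the lemma to the assertion that the arrangement $\mc{D}$ is \emph{essential}, i.e.\ that $\bigcap_{H \in \mc{D}} H = \{ 0 \}$. Each Namikawa hyperplane is linear, so $\mc{D}$ is a central arrangement, and by the Orlik--Solomon description of the cohomology of the complement of a complex hyperplane arrangement the Poincar\'e polynomial is
$$
P_{\Gamma}(t) = \sum_{Z} \mu(Z)\,(-t)^{\mathrm{codim}\, Z},
$$
the sum running over the intersection lattice of $\mc{D}$ and $\mu$ being its M\"obius function. The degree of this polynomial is the rank $\rk \mc{D} = \dim \mf{c} - \dim\left( \bigcap_{H \in \mc{D}} H \right)$: the unique deepest flat $Z_0 = \bigcap_{H \in \mc{D}} H$ is the top of the lattice, and its coefficient $(-1)^{\mathrm{codim}\, Z_0}\mu(Z_0)$ is strictly positive since the M\"obius function of a geometric lattice never vanishes at its maximum. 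Thus $\deg P_{\Gamma}(t) = \dim \mf{c}$ if and only if $\mc{D}$ is essential, and it is this last statement that I would prove.

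To do so I would exhibit a subarrangement of $\mc{D}$ that already cuts out the origin. By Corollary \ref{prop:conjcons}, $\mc{D} = \mc{E}$ contains all root hyperplanes $L_{\alpha}$ with $\alpha \in R_H$ and $[H] \in \mc{A}/\Gamma$, which together form the Coxeter arrangement of Namikawa's Weyl group $W$. Enlarging an arrangement only shrinks the intersection of its hyperplanes, so
$$
\bigcap_{H \in \mc{D}} H \ \subseteq \ \bigcap_{\alpha} L_{\alpha},
$$
and it suffices to check that the Coxeter arrangement of $W$ is essential.

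Finally I would identify this latter intersection with the $W$-fixed subspace. Since $W$ is generated by the reflections whose fixed hyperplanes are exactly the $L_{\alpha}$, a vector lies in $\bigcap_{\alpha} L_{\alpha}$ precisely when it is fixed by every such reflection, i.e.\ when it lies in $\mf{c}^{W}$. By \cite[Theorem 1.3]{BellamyNamikawa} we have $\mf{c} \simeq \bigoplus_{[H] \in \mc{A}/\Gamma} \mf{c}_H^{\Xi_H}$ as a module over $W = \prod_{[H]} W_H^{\Xi_H}$, with each factor $W_H^{\Xi_H}$ acting on the corresponding summand as its reflection representation. As the reflection representation of a nontrivial Weyl group carries no nonzero invariants, we obtain
$$
\mf{c}^{W} = \bigoplus_{[H] \in \mc{A}/\Gamma} \left( \mf{c}_H^{\Xi_H} \right)^{W_H^{\Xi_H}} = 0,
$$
so $\mc{D}$ is essential and $\deg P_{\Gamma}(t) = \dim \mf{c}$.

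The routine steps (the explicit Orlik--Solomon formula and the invariant-theoretic vanishing $\mf{c}^W = 0$) are standard. The one point I would take care over is the reduction in the first paragraph, namely that the degree of $P_{\Gamma}(t)$ really equals $\rk \mc{D}$; this hinges on the nonvanishing of the top M\"obius coefficient, which is where I expect the only genuine subtlety to lie, although for the intersection lattice of a central arrangement it is classical.
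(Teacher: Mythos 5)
Your proof is correct and follows essentially the same route as the paper: both use Corollary \ref{prop:conjcons} to place the Coxeter arrangement of $W$ inside $\mc{D}$, conclude that $\bigcap_{L \in \mc{D}} L = 0$ (essentiality), and then invoke the standard fact that the degree of the Poincar\'e polynomial of a central arrangement equals its rank. The only difference is one of detail: the paper cites \cite[Theorem 2.47, Definition 2.48]{OrlikTeraoBook} for that last step and asserts $\bigcap_{\alpha} L_{\alpha} = 0$ outright, whereas you spell out the M\"obius-function argument and justify the vanishing via $\mf{c}^{W} = 0$.
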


\begin{proof}
By Corollary \ref{prop:conjcons}, the Coxeter hyperplanes $L_{\alpha}$, for $\alpha \in R_H$ and $H \in \mc{A}$, all belong to $\mc{D}$. Therefore,  
$$
\bigcap_{L \in \mc{D}} L \subset \bigcap_{\alpha \in R} L_{\alpha} = 0. 
$$
Then the result follows from Theorem 2.47 and Definition 2.48 of \cite{OrlikTeraoBook}. 
\end{proof}

Lemma \ref{lem:Poicare} implies that the simplest arrangements come from those groups where $\dim \mf{c} = 1$. A complex reflection group is said to be a \textit{$2$-reflection group} if every reflection has order two and there is a single conjugacy class of reflections. 

\begin{prop}\label{prop:2reflgroup}
If $\Gamma$ is a $2$-reflection group then $\mf{c} \smallsetminus \mc{D} = \C^{\times}$, $P_{\Gamma}(t) = 1 + t$ and $(\h \times \h^*) / \Gamma$ admits a unique $\Q$-factorial terminalization. 
\end{prop}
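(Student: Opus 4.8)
The plan is to reduce everything to the observation that a $2$-reflection group has $\dim \mf{c} = 1$, after which all three assertions follow from the geometry of a single hyperplane through the origin in a line. First I would compute $\dim \mf{c}$. By definition of a $2$-reflection group there is a single conjugacy class of reflections, so the set $\mc{A}/\Gamma$ of conjugacy classes of minimal parabolics has exactly one element $[H]$; and since every reflection has order two, the cyclic group $\Gamma_H$ equals $\Z_2 = \s_2$, whence $W_H = \s_2$ and $\mf{c}_H$ is the $1$-dimensional reflection representation of $\s_2$. Using the decomposition $\mf{c} \simeq \bigoplus_{[H] \in \mc{A}/\Gamma} \mf{c}_H^{\Xi_H}$ together with Lemma \ref{nam_weyl_sym} (which shows $\Xi_H$ acts trivially, so that $\mf{c}_H^{\Xi_H} = \mf{c}_H$), I conclude that $\dim \mf{c} = 1$ and that Namikawa's Weyl group is $W = \s_2$.

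Next I would identify $\mc{D}$. In the line $\mf{c} \cong \C$ the only linear hyperplane is the origin, so $\mc{D} \subseteq \{ 0 \}$. On the other hand, Corollary \ref{prop:conjcons}(a) (equivalently Lemma \ref{lem:terminalsym2}) shows that the root hyperplane $L_{\alpha}$ for $\alpha \in R_H$ lies in $\mc{D} = \mc{E}$, and this hyperplane is precisely $\ker \alpha = \{ 0 \}$. Hence $\mc{D} = \{ 0 \}$ and $\mf{c} \smallsetminus \mc{D} = \C \smallsetminus \{ 0 \} = \C^{\times}$.

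The Poincaré polynomial then follows immediately, since $H^{\idot}(\C^{\times};\C)$ is $\C$ in degrees $0$ and $1$ and vanishes otherwise, giving $P_{\Gamma}(t) = 1 + t$. (Alternatively this is forced by Lemma \ref{lem:Poicare}, which yields $\deg P_{\Gamma} = \dim \mf{c} = 1$, together with the normalization that the constant term is $1$.) Finally, for the count of $\Q$-factorial terminalizations I would invoke the formula $E = \frac{1}{|W|} \dim H^{\idot}(\mf{c} \smallsetminus \mc{D};\C)$ recalled at the start of the Examples section: here $\dim H^{\idot}(\C^{\times};\C) = 2$ and $|W| = |\s_2| = 2$, so $E = 1$, i.e. $(\h \times \h^*)/\Gamma$ admits a unique $\Q$-factorial terminalization.

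There is no serious obstacle in this argument; the whole proof is a dimension count once one knows that $\dim \mf{c} = 1$. The only point requiring genuine care is translating the two defining properties of a $2$-reflection group into the two facts $|\mc{A}/\Gamma| = 1$ and $\mf{c}_H \cong \C$, and then confirming that both the total cohomology $\dim H^{\idot}(\C^{\times};\C)$ and the order $|W|$ equal $2$, so that the quotient defining $E$ is genuinely $1$.
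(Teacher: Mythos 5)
Your proof is correct and follows essentially the same route as the paper's: both reduce the statement to the observation that a single conjugacy class of order-two reflections forces $W = \s_2$ and $\mf{c} = \C$, so that $\mc{D} = \{0\}$ and everything follows by counting. The only difference is that you spell out details the paper leaves implicit (nonemptiness of $\mc{D}$ via the root hyperplane, and the explicit computation $E = 2/2 = 1$), which is a welcome but not substantive refinement.
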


\begin{proof}
Since there is only one conjugacy class of reflections and they all have order two, we immediately get $W = \s_2$, $\mf{c} = \C$ and $\mf{c} \smallsetminus \mc{D} = \C^{\times}$. Then it is clear that $P_{\Gamma}(t) = 1 + t$.  
\end{proof}

Noting that a $2$-reflection group is necessarily irreducible, the following follows easily from the Shephard-Todd classification. 

\begin{lem}
The $2$-reflection groups are $G(m,m,n)$ for $n > 2$, $G(p,p,2)$ with $p$ odd, and the exceptional groups 
\begin{align*}
G_{12}, G_{22}, H_3 = G_{23}, G_{24}, G_{27}, G_{29}, H_4 = G_{30}, \\ G_{31}, G_{33}, G_{34}, E_6 = G_{35}, E_7 = G_{36}, E_8 = G_{37}.
\end{align*}
\end{lem}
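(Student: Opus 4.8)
The plan is to first reduce to the irreducible case, then split into the infinite Shephard--Todd family and the exceptional groups. A finite complex reflection group decomposes as $\Gamma = \Gamma_1 \times \cdots \times \Gamma_r$ acting on $\h = \h_1 \oplus \cdots \oplus \h_r \oplus \h^{\Gamma}$, where each $\Gamma_i$ acts irreducibly as a reflection group on $\h_i$ and trivially on the other summands. Every reflection $s$ has one-dimensional moving space $\mathrm{Im}(1-s)$ and hence lies in a single factor $\Gamma_i$. Since conjugation in a direct product respects the factors, reflections in distinct factors are never conjugate. Thus a single conjugacy class of reflections forces all reflections into one factor, say $\Gamma_1$; as $\Gamma$ is generated by its reflections this gives $\Gamma = \Gamma_1$, so $\Gamma$ is irreducible. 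The Shephard--Todd classification then leaves only the infinite family $G(m,p,n)$ (with $p \mid m$) and the thirty-four exceptional groups.

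For $G(m,p,n)$ I would set $d = m/p$ and use the two types of reflections: the permutation reflections $e_i \mapsto \zeta e_j$, $e_j \mapsto \zeta^{-1} e_i$ (with $\zeta$ an $m$-th root of unity), which always have order two; and, when $d > 1$, the diagonal reflections $\mathrm{diag}(1,\dots,\zeta,\dots,1)$ with $\zeta$ a nontrivial $d$-th root of unity, whose order equals that of $\zeta$. Requiring every reflection to have order two therefore forces $d \in \{1,2\}$. If $d = 2$ and $n \geq 2$ both types occur, and conjugation by the full monomial group visibly preserves the type of a reflection, so there are at least two classes; hence $d = 2$ is excluded (the rank-one case $n = 1$ only produces $\Z_2 = G(1,1,2)$, already in the list). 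This leaves $G(m,m,n)$, whose reflections are exactly the permutation reflections. A short computation with diagonal and permutation conjugation shows these form a single class precisely when $n \geq 3$, and for $n = 2$ precisely when $m$ is odd (for $n = 2$ the attainable twists are the squares among the $m$-th roots of unity, which exhaust them iff $m$ is odd). This yields the families $G(m,m,n)$ with $n > 2$ and $G(p,p,2)$ with $p$ odd.

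It then remains to run through the exceptional groups $G_4,\dots,G_{37}$, where I would consult the standard tables of reflection orders and conjugacy classes of reflections (as in Shephard--Todd, or Lehrer--Taylor). The exceptional groups all of whose reflections have order two are $G_{12}, G_{13}, G_{22}, G_{23}, G_{24}, G_{27}, G_{28}, G_{29}, G_{30}, G_{31}, G_{33}, G_{34}, G_{35}, G_{36}, G_{37}$; among these, exactly $G_{13}$ and $G_{28} = F_4$ carry two conjugacy classes of reflections, while every other one carries a single class. Discarding those two leaves precisely the thirteen exceptional groups in the statement. I expect this final step to be the only real obstacle: it is a finite but genuinely tedious verification resting on having the reflection data for each exceptional group correct, so the essential effort is careful bookkeeping against a reliable source rather than any conceptual difficulty.
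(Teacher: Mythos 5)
Your proposal is correct and its skeleton matches the paper's: reduce to the irreducible case, invoke the Shephard--Todd classification, handle the exceptional groups by consulting tables, and analyse the infinite series separately. The differences are in how the two non-formal steps are discharged. First, you actually prove the reduction to irreducibility (reflections have one-dimensional moving space, hence live in a single factor; conjugacy respects factors; generation by reflections then collapses the product), whereas the paper merely asserts that a $2$-reflection group is necessarily irreducible. Second, and more substantially, for $G(m,p,n)$ the paper simply cites Read's count of conjugacy classes of reflections --- $m/p$ classes if $n>2$ or if $n=2$ with $p$ odd, and $m/p+1$ if $n=2$ with $p$ even --- so that ``single class'' immediately forces $m=p$ with the stated constraints, and order two is then automatic; you instead derive the same conclusion by hand, first using the order-two condition to force $d=m/p\in\{1,2\}$, then ruling out $d=2$ because diagonal and permutation-type reflections are visibly never conjugate inside the monomial group, and finally computing the attainable twists (squares of $m$-th roots of unity when $n=2$) to get the parity condition on $m$. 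Your route is self-contained and elementary where the paper outsources the bookkeeping to \cite{Read}; the paper's is shorter and less error-prone. Note also that your reversed logical order (impose order two first, conjugacy second) is what makes the hands-on computation tractable, since it reduces the diagonal reflections to a single sign before any conjugacy analysis. Your treatment of the exceptional groups is the same as the paper's (a finite table check), and your list --- the fifteen exceptional groups with only order-two reflections, minus $G_{13}$ and $G_{28}=F_4$ which have two classes --- is consistent with the paper's own Table \ref{exc_table}, where precisely those two groups appear with Namikawa Weyl group $\s_2\times\s_2$.
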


\begin{proof}
The exceptional groups can be checked by computer, or from the relevant tables. This leaves the infinite series $G(m,p,n)$. By \cite{Read}, the number of conjugacy classes of reflections is $m/p$ if $n>2$ or if $n=2$ and $p$ is odd, otherwise ($n=2$ and $p$ even) it is $m/p + 1$. Therefore, we have a single conjugacy class of reflections if and only if $m=p$ and $n>2$ or if $m=p$, $n=2$ and $p$ is odd. Moreover, if $m/p=1$, then all reflections have order $2$.
\end{proof}

\begin{example}
Let $\Gamma = \Z_{\ell}$, acting on $\h = \C$. Then the CM-hyperplanes are all of type $T$, and hence precisely the Coxeter arrangement  of type $\mathsf{A}$, i.e. 
$$
\bk_{1,i} + \cdots + \bk_{1,j} = 0, \quad \forall 1 \le i \le j \le \ell - 1.
$$
Equivalently, this is $\kappa_{1,i} - \kappa_{1,j} = 0$ for $0 \le j < i \le \ell-1$. 
\end{example}

\begin{example}
Let $\Gamma = \Z_{\ell} \wr \s_n$ for $n > 1$. Then 
\begin{align*}
\mf{c} & = \mf{c}_{H_1} \times \mf{c}_{H_2} \\
& = \left\{ (\bk_{1,0},\bk_{1,1};\bk_{2,0},\bk_{2,1}, \ds, \bk_{2,\ell-1}) \in \C^{\ell+2} \ | \  \bk_{1,0} + \bk_{1,1} = \sum_i \bk_{2,i} = 0 \right\}, \\
 &  =  \left\{ (\kappa_{1,0},\kappa_{1,1};\kappa_{2,0},\kappa_{2,1}, \ds, \kappa_{2,\ell-1}) \in \C^{\ell+2} \ | \  \kappa_{1,0} + \kappa_{1,1} = \sum_i \kappa_{2,i} = 0 \right\},
\end{align*}
with $\frac{1}{2} \bk_{1,1} = \kappa_{1,0} - \kappa_{1,1}$ and $\frac{1}{\ell} \bk_{2,\ell-i} = \kappa_{2,i+1} - \kappa_{2,i}$.  The $T$-hyperplanes are 
$$
\bk_{1,1} = 0, \quad \bk_{2,i} + \cdots + \bk_{2,j}= 0, \quad \forall \ i < j, 
$$
and the $F$-hyperplanes are 
$$
k \ell \bk_{1,1} + 2 m (\bk_{2,i} + \cdots + \bk_{2,j}) = 0,
$$
where $1 \le i \le j \le \ell-1$, $k \in \{ \pm 1 \}$ and $m \in \{ \pm 1, \ds, \pm (n-1) \}$. Equivalently, they are 
$$
\kappa_{1,0} - \kappa_{1,1} = 0, \quad \kappa_{2,i} - \kappa_{2,j}= 0, \quad \forall \ 0 \le i < j \le \ell-1, 
$$
and  $k (\kappa_{1,0} - \kappa_{1,1})  + m (\kappa_{2,i}  -  \kappa_{2,j}) = 0$. 
\end{example}

\subsection{Dihedral groups}

We consider, as an example, the dihedral groups $D_m = G(m,m,2)$. If $m$ is odd then $D_m$ is a $2$-reflection group, and is covered by Proposition \ref{prop:2reflgroup}. Therefore we assume that $m$ is even. In this case there are two conjugacy classes of reflections. Hence $\mc{A} / D_m = \{ [H_1], [H_2] \}$. Thus, $\mf{c} = \{ (\bk_{1,1},\bk_{2,1}) \in \C^2 \}$ with $W = \s_2 \times \s_2$ acting in the obvious way. It is shown in \cite[Section 6.10]{BellamyThesis} that the CM-hyperplanes are
$$
\mc{E} = \{ \bk_{1,1} = 0, \bk_{2,1} = 0, \bk_{1,1} + \bk_{2,1} = 0,  \bk_{1,1} - \bk_{2,1} = 0 \}. 
$$
By Lemma \ref{lem:terminalsym2}, the $T$-hyperplanes for $D_m$ are $ \bk_{1,1} = 0$ and $\bk_{2,1} = 0$. Therefore Theorem \ref{thm:fiberlessthan} says that $\mf{X}_{\bk}$ is \textit{not} $\Q$-factorial for a generic point of the hyperplanes $\bk_{1,1} + \bk_{2,1} = 0$ or $\bk_{1,1} - \bk_{2,1} = 0$. Moreover, for each $\bk \in \mf{c} \smallsetminus \mc{E}$, the Calogero-Moser space $\mf{X}_{\bk}$ is $\Q$-factorial and terminal. One can see directly that it has terminal singularities since there are no codimension two leaves. In fact, looking at the description of the leaves given in \cite[Table 1]{Cuspidal}, we see that $\mf{X}_{\bk}$ has a single isolated singularity $p$. This point is $T$-fixed. Then the equality $\mc{E} = \mc{D}$ implies that the local ring $\mathsf{Z}_{\bk}(D_m)_p$ has torsion class group for all $\bk \in  \mf{c} \smallsetminus \mc{E}$.\footnotemark.\footnotetext{When $m = 3$, $D_m$ is the exceptional Weyl group of type $G_2$. We have been told by Cedric Bonnaf\'e that he and Daniel Juteau have shown that the singularity $(\mf{X}_{\bk},p )$ is equivalent to $(\overline{\mc{O}}_{\mathrm{min}},0)$, where $\mc{O}_{\mathrm{min}}$ is the minimal nilpotent orbit in $\mf{sp}(4)$. Here $\bk \in \mf{c} \smallsetminus \mc{E}$ and $p$ the unique singular point. Thus, it also follows from \cite[Proposition 2.10]{FuNilpotentResolutions} the local ring $\mathsf{Z}_{\bk}(D_3)_p$ has torsion class group. They have also shown that if $\Gamma = \s_2 \wr \Z_2$ and $\bk$ a generic point of the $F$-hyperplane $\kappa_{1,0} - \kappa_{1,1} = 0$, then $(\mf{X}_{\bk},p)$ is equivalent to $(\overline{\mc{O}}_{\mathrm{min}},0)$, where $\mc{O}_{\mathrm{min}}$ is the minimal nilpotent orbit in $\mf{sl}(3)$. Here $p$ is the unique singular point of $\mf{X}_{\bk}$. Since $T^* \mathbb{P}^2$ resolves this singulartity, \cite[Corollary 1.3]{FuNilpotentResolutions} also implies that the local ring $\mathsf{Z}_{\bk}(\s_2 \wr \Z_2)_p$ does not have torsion class group.} The Orlik-Solomon algebra $H^{\idot}(\mf{c} \smallsetminus \mc{E}, \C)$ is the quotient of the exterior algebra $\wedge^{\idot}(x_1,x_2,x_3,x_4)$ by the ideal generated by 
\begin{align*}
\partial(x_1,x_2,x_3) & = x_2 \wedge x_3 - x_1 \wedge x_3 + x_1 \wedge x_2,  \\
\partial(x_1,x_3,x_4) & = x_3 \wedge x_4 - x_1 \wedge x_4 + x_1 \wedge x_3,  \\
\partial(x_1,x_2,x_4) & = x_2 \wedge x_4 - x_1 \wedge x_4 + x_1 \wedge x_2,  \\
\partial(x_2,x_3,x_4) & = x_3 \wedge x_4 - x_2 \wedge x_4 + x_2 \wedge x_3.
\end{align*}
This implies that 
$$
x_3 \wedge x_4 = x_2 \wedge x_4 = x_1 \wedge x_4 - x_1 \wedge x_2,
$$
and $x_1 \wedge x_3 = x_1 \wedge x_2$, $x_2 \wedge x_3 = 0$. Finally, $x_1 \wedge x_3 \wedge x_4 = x_1 \wedge x_2 \wedge x_4$ and 
$$
x_1 \wedge x_2 \wedge x_3 = x_2 \wedge x_3 \wedge x_4 = x_1 \wedge x_2 \wedge x_3 \wedge x_4 = 0.
$$
Hence it has a basis $\{ 1, x_1, x_2, x_3, x_4, x_1 \wedge x_2, x_1 \wedge x_4, x_1 \wedge x_2 \wedge x_4 \}$ and dimension $8$. Thus, proposition  \ref{conj:mainresult}, together with \cite[Theorem 1.1]{BellamyNamikawa}, would imply that there are two $\Q$-factorial terminalizations of $\C^4 / D_m$. 

\subsection{The group $G_4$}

There is an error in the computation of the cohomology of the hyperplane arrangement associated to the group $G_4$ in example 4.3 of \cite{BellamyNamikawa}. The conclusion is correct though. We repeat the computation here. For the group $G_4$, the space $\mf{c} = \{ (\kappa_0,\kappa_1, \kappa_2 ) \ | \ \sum_i \kappa_i = 0 \}$ is the reflection representation for $W = \s_3$. It is known that $(\h \times \h^*) / G_4$ admits a symplectic resolution \cite{Singular}. The CM-hyperplanes were computed in the proof of \cite[Theorem 1.4]{MoPositiveChar}. The $T$-hyperplanes are the root hyperplanes
$$
\kappa_1 = 0, \quad \kappa_2 = 0, \quad \kappa_1 + \kappa_2 = 0. 
$$
The $F$-hyperplanes are 
$$
\kappa_1 - 2 \kappa_2, \quad \kappa_1 - \kappa_2, \quad 2 \kappa_1 - \kappa_2.
$$
Then the Orlik-Solomon algebra $H^{\idot}(\mf{c} \smallsetminus \mc{D},\C)$ is a graded quotient of the exterior algebra 
$$
\wedge^{\idot}(x_1,x_2,x_3,x_4,x_5,x_6).
$$
Using the computer algebra system MAGMA \cite{MAGMA}, one can compute that the Poincar\'e polynomial of $H^{\idot}(\mf{c} \smallsetminus \mc{D},\C)$ is $5t^2 + 6t + 1$. Hence there are 
$$
\frac{1}{|W|} \dim H^{\idot}(\mf{c} \smallsetminus \mc{D},\C)  = \frac{12}{6} = 2
$$
non-isomorphic symplectic resolutions of $(\h \times \h^*) / G_4$. This implies that the two symplectic resolutions constructed in \cite{LehnSorger} exhaust all symplectic resolutions.

\subsection{Exceptional complex reflection groups}

The Calogero-Moser families and the hyperplane arrangement $\mc{E}$ for many (20 out of 34) exceptional complex reflection groups have been explicitly computed by the third author \cite{Thiel:2015jl}, and by Bonnafé and the third author \cite{BT-CM-computational}. We refer to \textit{loc. cit.} for details about the computations. The explicit hyperplane arrangements are available as \textsc{Sage} files on the third author's website.  
\begin{table}[htbp] 
\begin{displaymath}
{\footnotesize
\hspace{-55pt}\begin{array}{r|ccccc} 
\textrm{Group} & |\mc{E}| & \textrm{Weyl group} & \textrm{Poincar\'e polynomial} & E & \textrm{Free?} \\
\hline
G_4   & 6 & \s_3  & (5t+1)(t+1) & 2  & \textrm{Yes} \\
G_5   & 33 & \s_3 \times \s_3  & (116t^2 + 21t + 1)(11t + 1)(t+1) & 92  & \textrm{No} \\
G_6   & 16 & \s_2 \times \s_3  & (8t + 1)(7t + 1)(t+1)& 12  & \textrm{Yes} \\
G_7   &61 & \s_2 \times \s_3 \times \s_3  & (98644t^4 + 18462t^3 + 1489t^2 + 60t + 1)(t+1) & 3296  & \textrm{No} \\
G_8   & 25 & \s_4  & (13t + 1)(11t + 1)(t+1) & 14  & \textrm{Yes} \\
G_9   & 54 & \s_2 \times \s_4  &  (6499t^3 + 983t^2 + 53t^1 + 1)(t+1) & 2  & \textrm{No} \\
G_{10}  & 111 & \s_3 \times \s_4   & (1001586t^4 + 107662t^3 + 4913t^2 + 110t + 1)(t+1) & 15476  & \textrm{No} \\
G_{11}  &  196 & \s_2 \times \s_3 \times \s_4  & (383999826t^5 + 25688824t^4 + 857259t^3 + 17047t^2 + 195t + 1)(t+1) & 2851133  & \textrm{No}\footnotemark \\
G_{13}  & 6 & \s_2 \times \s_2  & (5t+1)(t+1) & 3  & \textrm{Yes} \\
G_{14}  &  22& \s_2 \times \s_3 & (116t^2 + 21t + 1)(t+1) & 23  & \textrm{No} \\
G_{15}  &  65 & \s_2 \times \s_3 \times \s_2 & (13982t^3 + 1529t^2 + 32t + 1)(1+t) & 2596  & \textrm{No} \\
G_{20}  & 12 & \s_3  & (11t+1)(t+1) & 4  & \textrm{Yes} \\
G_{25}  &  12 & \s_3  & (11t+1)(t+1) & 4  & \textrm{Yes} \\
G_{26}  &  37 & \s_2 \times \s_3  & (335t^2 + 36t + 1)(t+1) & 62  & \textrm{No} \\
F_4 = G_{28}   &  8 & \s_2 \times \s_2  & (7t+1)(t+1) & 4  & \textrm{Yes} \\
\end{array} 
}
\end{displaymath} 
\caption{Data for exceptional complex reflection groups.} \label{exc_table}
\end{table}
\footnotetext{The computation of the Poincaré polynomial in case of $G_{11}$ took three weeks. There is, however, a simpler way to see that the hyperplane arrangement is not free, namely it has a non-free localization. We would like to thank M. Cuntz for pointing this out to us.}
Let $E := \frac{1}{|W|} \dim H^{\idot}( \mf{c} \smallsetminus \mc{E}; \C)$. Recall that it is an integer, counting the number of $\Q$-factorial terminalizations admitted by $V / \Gamma$. In Table \ref{exc_table} we list for each exceptional group which is not $2$-generated the Namikawa Weyl group, the Poincaré polynomial of $\mc{E}$, the integer $E$, and we list whether the hyperplane arrangement $\mc{E}$ is free or not. We recall that a hyperplane arrangement is \textit{free} if its module of derivations is free over the coordinate algebra of the ambient vector space, see \cite[\S4.2]{OrlikTeraoBook}. For the free arrangements the Poincaré polynomial factorizes into integral linear factors of the form $b_i t + 1$, and the $b_i$ are the \textit{exponents} of the arrangement, see \cite[4.137]{OrlikTeraoBook}.

{\small
\bibliography{biblo}{}
\bibliographystyle{abbrv}
}

\end{document}